\newcounter{resultnum}[section]
\newtheorem{conclusion}{Conclusion}[section]
\newcounter{conclusionnum}[section]
\newcounter{conditionnum}[section]
\newcounter{conjecturenum}[section]
\newtheorem{example}{Example}[section]
\newcounter{examplenum}[section]
\newcounter{exercisenum}[section]
\newtheorem{lemma}{Lemma}[section]
\newcounter{lemmanum}[section]
\newcounter{notationnum}[section]
\newtheorem{theorem}{Theorem}[section]
\newcounter{theoremnum}[section]
\newtheorem{definition}{Definition}[section]
\newcounter{definitionnum}[section]
\newtheorem{corollary}{Corollary}[section]
\newcounter{corollarynum}[section]
\newtheorem{remark}{Remark}[section]
\newcounter{remarknum}[section]
\newtheorem{proposition}{Proposition}[section]
\newcounter{propositionnum}[section]
\newcounter{acknowledgementnum}[section]
\newcounter{algorithmnum}[section]
\newcounter{axiomnum}[section]
\newcounter{casenum}[section]
\newcounter{claimnum}[section]
\newcounter{summarynum}[section]
\newcounter{problemnum}[section]
\newenvironment{proof}[1][]{\textbf{Proof.} }{}
\begin{document}

\title{Almost K\"{a}hler Ricci Flows and Einstein and Lagrange--Finsler
Structures on Lie Algebroids}
\date{September 1, 2014}

\author{{\large \textbf{Sergiu I. Vacaru}}\thanks{sergiu.vacaru@uaic.ro}\\
{\qquad } \\
{\small Theory Division, CERN, CH-1211, Geneva 23, Switzerland} \footnote{associated visiting research}\\
{\small and } \\
  {\small  Rector's Office, Alexandru Ioan Cuza University}, \\
{\small Alexandru Lapu\c sneanu street, nr. 14, UAIC -- Corpus R, office 323};\\
 {\small Ia\c si,\ Romania, 700057}
  }

\renewcommand\Authands{and }

\maketitle

\begin{abstract}
In this work we investigate Ricci flows of almost K\"{a}hler structures on
Lie algebroids when the fundamental geometric objects are completely
determined by (semi) Riemannian metrics, or (effective) regular generating
Lagrange/ Finsler functions. There are constructed canonical almost
symplectic connections for which the geometric flows can be represented as
gradient ones and characterized by nonholonomic deformations of Grigory
Perelman's functionals. The first goal of this paper is to define such
thermodynamical type values and derive almost K\"{a}hler -- Ricci geometric
evolution equations. The second goal is to study how fixed Lie algebroid,
i.e. Ricci soliton, configurations can be constructed for Riemannian
manifolds and/or (co) tangent bundles endowed with nonholonomic
distributions modelling (generalized) Einstein or Finsler -- Cartan spaces.
Finally, there are provided some examples of generic off--diagonal solutions
for Lie algebroid type Ricci solitons and (effective) Einstein and
Lagrange--Finsler algebroids.

\vskip0.1cm

\textbf{Keywords:} Ricci flows, almost K\"{a}hler structures, Lie
algebroids, Lagrange mechanics, Finsler geometry, effective Einstein spaces.

\vskip5pt {\footnotesize MSC:\ 53C44, 53D15, 37J60, 53D17, 70G45, 70S05,
83D99, 53B40, 53B35}
\end{abstract}


\section{Introduction}

Various theories of geometric flows have been studied intensively in the
past decade. The most popular is the Ricci flow theory \cite{ham1,ham2,ham3}
finally elaborated by G. Perelman \cite{gper1,gper2,gper3} in the form which
allowed proofs of the Thurston and Poincar\'{e} conjectures (details and
proofs are given in \cite{caozhu,kleiner,rbook}). The main results are
related to the evolution of Riemannian and K\"{a}hler metrics and symplectic
curvature flows, see additional references in \cite{chau,streets}.

It should be also emphasized that the Ricci flow theory has an increasing
impact in physical mathematics. In a more general context, we were
interested to study Ricci flow evolution models for non--Riemannian
geometries, for instance, of nonholonomic manifolds endowed with compatible
metric and nonlinear connection structures \cite{vricci1}, metric compatible
and noncompatible Riemann--Finsler--Lagrange spaces \cite{vricci2,vfinsler},
noncommutative geometries and generalizations \cite{vncricci}, fractional
and/or diffusion Ricci flows \cite{vfracrf} etc.

The purpose of this article is to formulate a geometric approach for the
almost K\"{a}hler Ricci flows on/of Lie algebroids when the fundamental
geometric objects are completely defined by a regular Lagrange, $L,$
generating function, or a (generalized) Einstein metric $\mathbf{g}$ with
conventional integer $(n+m)$--dimensional splitting. Similar constructions
can be performed for certain effective (analogous) models and/or, in
particular, for Finsler, $F$, fundamental functions.\footnote{%
In this paper, a nonholonomic manifold $\left( V,\mathcal{N}\right) ,$ [$%
\dim V=n+m$ with finite $n,m\geq 2]$ is modelled as a (semi) Riemannian one
of necessary smooth class and endowed with a non--integrable (nonholonomic,
equivalently, anholonomic) distribution $\mathcal{N}.$ For geometric models
of Lagrange--Finsler spaces, we can work with nonholonomic vector/ tangent
bundles, when $V=TM$ is the total space of the tangent bundle on a real
manifold $M,\dim M=n,$ see details and references in \cite{vrflg}.}

Here is an outline of the work. In section \ref{s2}, we survey the geometry
of Lie algebroids and prolongations endowed with nontrivial N--connection
structure and formulate an approach to the almost K\"{a}hler geometry on
nonolonomic Lie albebroids. The Ricci flow theory for almost symplectic
geometries determined by canonical metric compatible algebroid connections
is studied in section \ref{s3}. Finally, we provide a series of examples of
almost K\"{a}hler Ricci soliton solutions, related to (modified) gravity
models and Lagrange--Finsler Lie algebroid mechanics in section \ref{s4}.
The Appendix contains some necessary and important coefficient type formulas
and proofs.

\section{Almost K\"{a}hler Lie Algebroids \& N--connections}

\label{s2}In this section, we recall basic definitions on noholonomic Lie
algebroids endowed with N--connection structure, see \cite%
{vcliffalg,vhamlagralg}, and develop the approach for almost K\"{a}%
hler Lie algebroids.

\subsection{Preliminaries}

To begin, let us fix a real manifold $V$ of dimension $n\geq 2$ and
necessary smooth class and consider a conventional horizontal (h) and
vertical (v) splitting determined by a Whitney sum for its tangent bundle $%
TV,$
\begin{equation}
\mathbf{N}:\ TV=hTV\oplus vTV.  \label{whitn}
\end{equation}

\begin{definition}
A $h-v$ splitting $\mathbf{N}$ (\ref{whitn}) defines a nonlinear connection,
N--connection, structure.
\end{definition}

We shall use boldface symbols in order to emphasize that the geometric
objects on a space $\mathbf{V}=(V,\mathbf{N,g)}, \mathbf{TV},$ or $\mathbf{TM%
}=(TM\mathbf{,N,}L)$, are adapted to a N--splitting (\ref{whitn}).\footnote{%
\textbf{Notational Remarks: } All constructions in this work can be
performed in coordinate free form. Nevertheless, to find/generate explicit
examples of solutions of systems of partial differential equations, PDEs, is
necessary to consider some adapted frame and coordinate systems. Working
with different classes of geometries and spaces, it is more convenient to
treat indices also as abstract labels. This simplifies formulas and sugests
ideas how certain proofs are possible when local constructions have an
"obvious" global extension. For instance, index type formulas are largely
used in G. Perelman's works and for applications in mathematical relativity
and/or geometric mechanics.
\par
For a conventional $n+m$ splitting, the local coordinates $u=(x,y)$ can be
labelled in the form $u^{\alpha }=(x^{i},y^{a}), $ where $i,j,k,...=1,2,...n$
and $a,b,c...=n+1,n+2,...,n+m,$ where $x^{i}$ and $y^{a}$ are respectively
the h-- and v--coordinates. A general local base is written $e_{\alpha
^{\prime }}=(e_{i^{\prime }},e_{a^{\prime }})$ for some frame transforms $%
e_{\alpha ^{\prime }}=e_{\ \alpha ^{\prime }}^{\alpha }(u)\partial _{\alpha
},$ where $\partial _{\alpha }=\partial /\partial u^{\alpha }=(\partial
_{i}=\partial /\partial x^{i},\partial _{a}=\partial /\partial y^{a}),$ and
define corresponding dual transforms with inverse matrices $e_{\alpha \ }^{\
\alpha ^{\prime }}(u),$ when $e^{\alpha ^{\prime }}=e_{\alpha \ }^{\ \alpha
^{\prime }}(u)du^{\alpha }.$ Calligraphic symbols will be considered for
algebroid configurations. The left "up--low" indices will be considered as
abstract labels without summation rules. Finally, we note that the Einstein
summation rule on "low - up" repeating indices will be applied if the
contrary is not stated.} In global form, the concept of N--connection was
formulated by C. Ehresmann \cite{ehresmann} in 1955. E. Cartan used such a
geometric object in his works on Finsler geometry beginning 1935 (see \cite%
{vrflg} for references and an alternative definition of N--connections via
exact sequences) but in coordinate form, $\mathbf{N}=N_{i}^{a}(x,y)dx^{i}%
\otimes \partial /\partial y^{a}.$ On (semi) Riemannian manifolds, a $h-v$%
--splitting of type (\ref{whitn}) can be defined by a corresponding class of
nonholonomic frames.

Let us consider a nonholonomic distribution on $V$ defined by a generating
function $\mathcal{L}(u),$ with nondegenerate Hessian $\tilde{h}_{ab}=\frac{1%
}{2}\frac{\partial ^{2}\mathcal{L}}{\partial y^{a}\partial y^{b}},\ \det |%
\tilde{h}_{ab}|\neq 0.$ If $V=TM,$ and $n=m,$ we can consider $\mathcal{L}%
=L(x,y)$ as a regular Lagrangian defining a Lagrange space \cite{kern}. For $%
L=F^{2}(x,y),$ where $F(x,\xi y)=\xi F(x,y),\xi >0$, we get a homogeneous
Finsler generating function (additional conditions are imposed on $F$ for
different models of Finsler like geometries), see section \ref{ssfs}. We
shall work with arbitrary $\mathcal{L}$ considering the Finsler
configurations as certain particular ones distinguished by homogeneity
conditions and additional assumptions.

\begin{theorem}
\label{theoranalog}The Euler--Lagrange equations $\frac{d}{d\tau }\frac{%
\partial \mathcal{L}}{\partial y^{i}}-\frac{\partial \mathcal{L}}{\partial
x^{i}}=0,$ where $y^{i}=dx^{i}/d\tau $, for $x^{i}(\tau )$ depending on real
parameter $\tau $, are equivalent to the "nonlinear" geodesic equations $%
\frac{dx^{a}}{d\tau }+2\tilde{G}^{a}(x,y)=0,$ i.e. to the paths of a
canonical semispray $S=y^{i}\frac{\partial \mathcal{L}}{\partial x^{i}}-2%
\tilde{G}^{a}\frac{\partial }{\partial y^{a}},$ when $\tilde{G}^{a}=\frac{1}{%
4}\ \tilde{h}^{a\ n+i}(\frac{\partial ^{2}\mathcal{L}}{\partial
y^{n+i}\partial x^{k}}y^{n+k}-\frac{\partial \mathcal{L}}{\partial x^{i}}),$
where $\tilde{h}^{ab}$ is inverse to $\ \tilde{h}_{ab},$ and the set of
coefficients $\tilde{N}_{i}^{a}=\frac{\partial \tilde{G}^{a}}{\partial
y^{n+i}}$ defines a N--connection structure (\ref{whitn}).
\end{theorem}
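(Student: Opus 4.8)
The plan is to reduce the Euler--Lagrange equations to second--order form by a direct differentiation and then to read off the semispray and N--connection coefficients from the result. First I would expand the total derivative in $\frac{d}{d\tau}\frac{\partial \mathcal{L}}{\partial y^{i}}$ by the chain rule along a curve $x^{i}(\tau)$ with $y^{i}=dx^{i}/d\tau$, obtaining $\frac{\partial^{2}\mathcal{L}}{\partial y^{i}\partial y^{k}}\frac{d^{2}x^{k}}{d\tau^{2}}+\frac{\partial^{2}\mathcal{L}}{\partial y^{i}\partial x^{k}}y^{k}-\frac{\partial \mathcal{L}}{\partial x^{i}}=0$. The coefficient of the acceleration is twice the Hessian $\tilde{h}_{ik}$, so the nondegeneracy $\det|\tilde{h}_{ab}|\neq 0$ lets me contract with the inverse $\tilde{h}^{ab}$ and solve for $d^{2}x^{a}/d\tau^{2}$.

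Second, I would identify the remaining inhomogeneous term with $2\tilde{G}^{a}$ as defined in the statement, so that the Euler--Lagrange system becomes exactly the nonlinear geodesic equation $\frac{d^{2}x^{a}}{d\tau^{2}}+2\tilde{G}^{a}(x,y)=0$. The equivalence is reversible precisely because the contraction with $\tilde{h}^{ab}$ is invertible, so neither direction gains nor loses solutions. To exhibit this as the flow of the canonical semispray, I would note that a curve solves the second--order system if and only if its natural lift $(x^{i}(\tau),y^{a}(\tau)=dx^{i}/d\tau)$ is an integral curve of $S=y^{i}\partial/\partial x^{i}-2\tilde{G}^{a}\partial/\partial y^{a}$, since its two components reproduce $\dot{x}^{i}=y^{i}$ and $\dot{y}^{a}=-2\tilde{G}^{a}$.

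Third, and this is where the genuine geometric content lies, I would verify that $\tilde{N}_{i}^{a}=\partial \tilde{G}^{a}/\partial y^{n+i}$ defines an N--connection, i.e. a well--defined horizontal complement in the splitting (\ref{whitn}). The local formula is immediate; the real point is invariance under changes of adapted coordinates on $TM$. Because $\mathcal{L}$ is a scalar, the Euler--Lagrange operator is coordinate independent and $S$ is a globally defined vector field; transforming $S$ under $x^{i'}=x^{i'}(x)$ with the induced $y^{a'}=\frac{\partial x^{a'}}{\partial x^{b}}y^{b}$ forces $\tilde{G}^{a}$ to obey an affine, inhomogeneous transformation law characteristic of a connection. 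Differentiating that law with respect to $y^{n+i}$ then yields precisely the transformation rule required of N--connection coefficients, so the distribution $hTV$ they determine is intrinsic.

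The hard part is this last step: the algebra confirming that the inhomogeneous term in the transformation of $\tilde{G}^{a}$ differentiates to the correct cocycle term for $\tilde{N}_{i}^{a}$, i.e. that the second--derivative obstruction $\partial^{2}x^{a'}/\partial x^{i}\partial x^{b}$ enters with the placement guaranteeing a coordinate--free splitting. The earlier steps are routine chain--rule computations together with the matrix inversion secured by regularity of $\mathcal{L}$.
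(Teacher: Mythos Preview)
Your proposal is correct and follows exactly the approach the paper indicates: the paper's own proof consists of the single line ``An explicit proof consists from straightforward computations,'' and what you have written is precisely that straightforward computation spelled out --- chain-rule expansion of the Euler--Lagrange equations, inversion via the regular Hessian to isolate the accelerations and read off $\tilde G^{a}$, identification with the integral curves of the semispray, and finally the check that the derived $\tilde N_{i}^{a}$ transform as N--connection coefficients. If anything, your third step (verifying the coordinate-change behaviour of $\tilde N_{i}^{a}$) is more careful than the paper, which leaves this implicit.
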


An explicit proof consists from straightforward computations. We put "tilde"
on some geometric objects in order to emphasize that they are generated by $%
\mathcal{L}.$

\begin{corollary}
Any prescribed generating function $\mathcal{L}$ on $\mathbf{V}$ defines a
canonical distinguished metric, d--metric $\mathbf{\tilde{g}}$, which is
adapted to the N--connection splitting determined by data $\tilde{h}_{ab}$
and $\tilde{N}_{i}^{a}$, i.e. $\mathbf{\tilde{g}}=h{\tilde{g}}+v{\tilde{g}}$%
, when
\begin{eqnarray}
\mathbf{\tilde{g}} &=&\tilde{g}_{ij}dx^{i}\otimes dx^{j}+\tilde{h}_{ab}%
\mathbf{\tilde{e}}^{a}\otimes \mathbf{\tilde{e}}^{b},\ \tilde{g}_{ij}=\tilde{%
h}_{n+i\ n+j},  \label{lfsm} \\
\mathbf{\tilde{e}}_{\alpha } &=& (\mathbf{\tilde{e}}_{i}=\partial _{i}-%
\tilde{N}_{i}^{a}\partial _{a},e_{a}=\partial _{a}),\ \mathbf{\tilde{e}}%
^{\alpha }=(e^{i}=dx^{i},\mathbf{\tilde{e}}^{a}=dy^{a}+\tilde{N}%
_{i}^{a}dx^{i})  \label{dbases}
\end{eqnarray}
\end{corollary}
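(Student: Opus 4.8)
The plan is to construct the canonical d-metric explicitly from the data $\tilde{h}_{ab}$ and $\tilde{N}_i^a$ supplied by Theorem \ref{theoranalog}, and then verify that the resulting object is genuinely adapted to the N-connection splitting (\ref{whitn}). First I would recall that the N-connection coefficients $\tilde{N}_i^a = \partial \tilde{G}^a/\partial y^{n+i}$ determine the nonholonomic frame $\mathbf{\tilde{e}}_\alpha$ and coframe $\mathbf{\tilde{e}}^\alpha$ written in (\ref{dbases}); these are precisely the adapted (anholonomic) bases in which the Whitney sum $TV = hTV \oplus vTV$ is realized, since the $\mathbf{\tilde{e}}_i = \partial_i - \tilde{N}_i^a \partial_a$ span $hTV$ and the $\partial_a$ span $vTV$. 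The point to check is that $\mathbf{\tilde{e}}^\alpha$ is indeed the dual coframe, i.e. $\mathbf{\tilde{e}}^\alpha(\mathbf{\tilde{e}}_\beta) = \delta^\alpha_\beta$, which is an immediate computation using $dy^a(\partial_i) = 0$ and $dx^i(\partial_a)=0$.

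Next I would define the metric components directly from the Hessian: set $\tilde{g}_{ij} := \tilde{h}_{n+i\,n+j}$ on the horizontal block and keep $\tilde{h}_{ab}$ on the vertical block, and then assemble
\begin{equation}
\mathbf{\tilde{g}} = \tilde{g}_{ij}\,dx^i \otimes dx^j + \tilde{h}_{ab}\,\mathbf{\tilde{e}}^a \otimes \mathbf{\tilde{e}}^b .
\label{proofdm}
\end{equation}
The key claim is that this tensor field is well defined and symmetric: symmetry follows since $\tilde{h}_{ab}$ is the symmetric Hessian of $\mathcal{L}$ and $\tilde{g}_{ij}$ inherits symmetry from it. Nondegeneracy on the vertical block is exactly the hypothesis $\det|\tilde{h}_{ab}| \neq 0$, and on the horizontal block it follows because $\tilde{g}_{ij} = \tilde{h}_{n+i\,n+j}$ is obtained by restricting the nondegenerate Hessian to a coordinate subblock compatible with the $n=m$ splitting. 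The decomposition $\mathbf{\tilde{g}} = h\tilde{g} + v\tilde{g}$ is then manifest from the block-diagonal form in the adapted coframe.

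The substantive step is verifying that $\mathbf{\tilde{g}}$ is \emph{adapted} to the N-connection, meaning that it contains no mixed $h$-$v$ terms when expressed in the $\mathbf{\tilde{e}}^\alpha$ basis. This is where the use of the adapted coframe $\mathbf{\tilde{e}}^a = dy^a + \tilde{N}_i^a dx^i$, rather than the holonomic $dy^a$, is essential: writing the vertical part in terms of $\mathbf{\tilde{e}}^a$ absorbs precisely the off-diagonal $N$-terms that would otherwise appear, so that $\mathbf{\tilde{g}}$ splits cleanly. I would confirm this by expanding $\tilde{h}_{ab}\mathbf{\tilde{e}}^a \otimes \mathbf{\tilde{e}}^b$ in the holonomic basis and observing that the resulting generic off-diagonal metric is exactly the coordinate form of a metric respecting the splitting (\ref{whitn}); this is the standard equivalence between d-metrics and generic off-diagonal metrics on nonholonomic manifolds.

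The main obstacle I anticipate is not any single computation but the bookkeeping in reconciling two index conventions simultaneously: the $(n+m)$-dimensional abstract indices $\alpha,\beta$ and the identification $\tilde{g}_{ij} = \tilde{h}_{n+i\,n+j}$, which presupposes the tangent-bundle structure $V = TM$ with $n=m$ so that horizontal and vertical indices can be matched. Care is needed to ensure that the construction is stated for general $\mathbf{V}$ while the horizontal-vertical identification is only invoked where that matching is available; otherwise $\tilde{g}_{ij}$ must be taken as independent data induced by $\mathcal{L}$ through the Hessian along the fiber. Apart from this indexing subtlety, the proof is a direct verification, consistent with the remark following Theorem \ref{theoranalog} that such canonical objects arise from straightforward computations once the generating function $\mathcal{L}$ is fixed.
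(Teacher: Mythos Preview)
Your proposal is correct and follows essentially the same approach as the paper: the paper's proof is a one-line reference to the Sasaki lift from $M$ to $TM$ (citing Yano--Ishihara and Matsumoto), and what you have written is precisely an explicit unpacking of that construction in the adapted coframe. Your additional care about the duality check, nondegeneracy, and the $n=m$ index matching is more detail than the paper provides, but the underlying idea is identical.
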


The proof is a similar to that for the Sasaki lift from $M$ to $TM,$ see
\cite{yano,matsumoto}.

\subsection{Distinguished Lie algebroids and prolongations}

In Refs. \cite{vcliffalg,vhamlagralg}, we introduced

\begin{definition}
A Lie distinguished algebroid (d--algebroid) $\mathcal{E}=(\mathbf{E}%
,\left\lfloor \cdot ,\cdot \right\rfloor ,\rho )$ over a manifold $M$ is
defined by 1)\ a N--connection structure, $\mathbf{N}:TE=hE\oplus vE$, and
2) a Lie aglebroid structure determined by 2a) a real \textit{vector bundle}
$\tau :\mathbf{E}\rightarrow M$ together with 2b) a \textit{Lie bracket} $%
\left\lfloor \cdot ,\cdot \right\rfloor $ on the spaces of global sections $%
Sec(\tau )$ \ of map $\tau $ and 2c) an \textit{anchor} map $\rho :\mathbf{E}%
\rightarrow TM,$ i.e., a bundle map over identity and constructed such that
for the homomorphism $\rho :Sec(\tau )\rightarrow \mathcal{X}(M)$ of $%
C^{\infty }(M)$--modules $\mathcal{X}$ this map satisfies the condition%
\begin{equation*}
\left\lfloor X,fY\right\rfloor =f\left\lfloor X,Y\right\rfloor +\rho
(X)(f)Y,~\forall X,Y\in Sec(\tau )\mbox{ and }f\in C^{\infty }(M).
\end{equation*}
\end{definition}

If $\mathbf{N}$ is integrable (i.e. with trivial N--connection structure), a
Lie d--algebroid is just a\textit{\ Lie algebroid} $\mathcal{E}%
=(E,\left\lfloor \cdot ,\cdot \right\rfloor ,\rho )$. For "non-boldface"
constructions, see details in Refs. \cite{higgins,mackenzie,vaisman,martinez1,martinez2,dleon1,cortes}. The anchor map $\rho $ is
equivalent to a homomorphism between the Lie algebras $\left( Sec(\tau
),\left\lfloor \cdot ,\cdot \right\rfloor \right) $ and $\left( \mathcal{X}%
(M),\left\lfloor \cdot ,\cdot \right\rfloor \right) .$\footnote{%
Locally, the properties of a Lie d--algebroid $\mathcal{E}$ are determined
by the functions $\rho _{a}^{i}(x^{k})$ and $C_{ab}^{e}(x^{k}),$ where $%
x=\{x^{k}\}$ are local coordinates on a chart $U$ on $M$ (or on $hV$ if a
nonholnomic manifold is considered)$,$ with $\rho (e_{a})=\rho _{a}^{i}(x)%
\mathbf{e}_{i}$ and $\left\lfloor e_{a},e_{b}\right\rfloor =\mathbf{C}%
_{ab}^{f}(x)e_{f},$ satisfying the following equations $\rho _{a}^{i}\mathbf{%
e}_{i}\rho _{b}^{j}-\rho _{b}^{i}\mathbf{e}_{i}\rho _{a}^{j}=\rho
_{f}^{j}C_{ab}^{f}$ and $\sum\limits_{\mbox{cycl\ }(a,b,f)}\left( \rho
_{a}^{i}\partial _{i}C_{be}^{f}+C_{be}^{d}C_{ad}^{f}\right) =0$. Boldface
operators are defined by N--coefficients similarly to (\ref{dbases}). For
trivial N--elongated partial derivatives and differentials, we can use local
coordinate frames when $\mathbf{e}_{i}\rightarrow \partial _{i},$ $%
e_{a}=\partial _{a},e^{i}=dx^{i},\mathbf{e}^{a}\rightarrow dy^{a}.$%
\par
The exterior differential on $\mathcal{E}$ with nonholonomic $\mathbf{E}$
can be defined in standard form with $E.$ We introduce on $\mathcal{E}$ the
operator $d:Sec(\bigwedge\nolimits^{k}\tau ^{\ast })\rightarrow
Sec(\bigwedge\nolimits^{k+1}\tau ^{\ast }),d^{2}=0,$ where $\bigwedge $ is
the antisymmetric product operator. The contributions of a N--connection can
be seen from such formulas for a smooth formula $f:M\rightarrow \mathbb{R}%
,df(X)=\rho (X)f,$ for $X\in Sec(\tau ),$ when $dx^{i}=\rho _{a}^{i}\mathbf{e%
}^{a}\mbox{ and }d\mathbf{e}^{f}=-\frac{1}{2}C_{ab}^{f}\mathbf{e}^{a}\wedge
\mathbf{e}^{b}$. With respect to any section $X$ on $M,$ we can define the
Lie derivative $\mathcal{L}_{X}=i_{X}\circ d+d\circ
i_{X}:~Sec(\bigwedge\nolimits^{k}\tau ^{\ast })\rightarrow
Sec(\bigwedge\nolimits^{k}\tau ^{\ast })$, using the cohomology operator $d$
and its inverse $i_{X}.$}

\begin{example}
\textbf{Nonholonomic Lie algebroids:} If $\mathbf{E=}TV$ for a nonolonomic
manifold $\mathbf{V}=(V,\mathbf{N)}$ with N--splitting (\ref{whitn}), the
values like $\mathcal{X}(M)$ are considered for $M\rightarrow hV$ and
sections $\tau $ are modelled on $vV.$ We shall construct Ricci soliton
configurations, or Einstein manifolds, with Lie algebroid symmetry in
section \ref{s4}.
\end{example}

Let us extend the concept of prolongation Lie algebroid \cite%
{martinez1,martinez2,dleon1,cortes} (in our case) in order to include
N--connections. We consider a Lie d--algebroid $\mathcal{E}=(\mathbf{E}%
,\left\lfloor \cdot ,\cdot \right\rfloor ,\rho )$ and a fibration $\pi :%
\mathbf{P}\rightarrow M$ \ both defined over the same manifold $M.$ In
general, $\mathbf{E,}$ $\mathbf{P}$ and $\mathbf{TM,}$ or $\mathbf{TV,}$ may
be enabled with different N--connection structures. The local coordinates
are $u^{\underline{\alpha }}=(x^{i},y^{A})\in P$ when $\{e_{a}\}$ will be
used for a local basis of sections of $\mathbf{E}.$ If $\mathbf{E=TV,}$ we
write $u^{\underline{\alpha }}=(x^{i},y^{I}).$ In our constructions, we can
consider that $\mathbf{P=E}.$ The anchor map $\rho :\mathbf{E}\rightarrow
\mathbf{TM}$ and the tangent map $\mathbf{T}\pi :\mathbf{TP\rightarrow TM}$
are all defined to be $h$--$v$--adapted for nonholonomic bundles and/or
fibered structures. These structures can be used to construct the subset
\begin{equation}
\mathcal{T}_{s}^{E}\mathbf{P}:=\{(b,v)\in \mathbf{E}_{x}\times T_{x}\mathbf{P%
};\rho (b)=T_{p}\pi (v);p\in \mathbf{P}_{x},\pi (p)=x\in M\}  \label{subset}
\end{equation}%
and prove this result:

\begin{theorem}
--\textbf{Definition.} \label{theordif} The the prolongation $\mathcal{T}^{%
\mathbf{E}}\mathbf{P}:=\bigcup_{s\in S}\mathcal{T}_{s}^{\mathbf{E}}\mathbf{P}
$ of a nonholonomic $\mathbf{E}$ over $\pi $ is another Lie d--algebroid
(the construction (\ref{subset}) can be considered for any set of charts
covering such spaces).
\end{theorem}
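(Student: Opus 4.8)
The plan is to exhibit on $\mathcal{T}^{\mathbf{E}}\mathbf{P}$ the four pieces of data required by the definition of a Lie d--algebroid over the base $\mathbf{P}$: (i) a vector bundle projection, (ii) an anchor into $T\mathbf{P}$, (iii) a Lie bracket on sections, and (iv) an N--connection splitting; and then to verify the compatibility axioms. I would follow the classical prolongation construction of \cite{martinez1,martinez2,dleon1,cortes} and track at each step the modifications forced by the nonholonomic (N--adapted) structure, specializing if convenient to the case $\mathbf{P}=\mathbf{E}$ of later interest.

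First I would check that the total space $\mathcal{T}^{\mathbf{E}}\mathbf{P}=\bigcup_{s\in S}\mathcal{T}_s^{\mathbf{E}}\mathbf{P}$ is a vector bundle over $\mathbf{P}$. For each $p\in\mathbf{P}$ with $\pi(p)=x$, the fibre $\{(b,v)\in\mathbf{E}_x\times T_p\mathbf{P}:\rho(b)=T_p\pi(v)\}$ is the kernel of the linear map $(b,v)\mapsto \rho(b)-T_p\pi(v)\in T_xM$; since $\pi$ is a fibration $T_p\pi$ is surjective, so this map has constant rank $n$ and the fibre is a vector space of dimension $\mathrm{rank}\,\mathbf{E}+\dim\mathbf{P}-n$. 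Local triviality follows from that of $\mathbf{E}$ and $T\mathbf{P}$, giving the projection $\mathcal{T}^{\mathbf{E}}\mathbf{P}\to\mathbf{P}$, $(b,v)\mapsto p$. The anchor is then the h--v--adapted bundle map $\rho^{\mathcal{T}}(b,v):=v$ into $T\mathbf{P}$.

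Next I would build an N--adapted local frame. Starting from a local basis $\{e_a\}$ of $Sec(\tau)$ with $\rho(e_a)=\rho_a^i\mathbf{e}_i$ and $\left\lfloor e_a,e_b\right\rfloor=C_{ab}^f e_f$, and coordinates $u^{\underline{\alpha}}=(x^i,y^A)$ on $\mathbf{P}$, I would set the horizontal lifts $\mathcal{X}_a(p)=\big(e_a,\ \rho_a^i\,\mathbf{e}_i|_p\big)$ and the vertical sections $\mathcal{V}_A(p)=\big(0,\ \partial/\partial y^A|_p\big)$, where $\mathbf{e}_i$ are the N--elongated operators of (\ref{dbases}); one checks $\rho(e_a)=T_p\pi(\rho_a^i\mathbf{e}_i)$, so these indeed lie in the prolongation. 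The Lie bracket is defined on this frame by $\left\lfloor\mathcal{X}_a,\mathcal{X}_b\right\rfloor_{\mathcal{T}}=C_{ab}^f\mathcal{X}_f$, $\left\lfloor\mathcal{X}_a,\mathcal{V}_B\right\rfloor_{\mathcal{T}}=0$, $\left\lfloor\mathcal{V}_A,\mathcal{V}_B\right\rfloor_{\mathcal{T}}=0$, and extended to arbitrary sections by imposing skew--symmetry and the Leibniz rule $\left\lfloor X,fY\right\rfloor_{\mathcal{T}}=f\left\lfloor X,Y\right\rfloor_{\mathcal{T}}+\big(\rho^{\mathcal{T}}(X)f\big)Y$.

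The main work, and the principal obstacle, is verifying that $(\rho^{\mathcal{T}},\left\lfloor\cdot,\cdot\right\rfloor_{\mathcal{T}})$ really satisfies the algebroid axioms in the nonholonomic frame. The anchor--homomorphism property $\rho^{\mathcal{T}}(\left\lfloor\mathcal{X}_a,\mathcal{X}_b\right\rfloor_{\mathcal{T}})=[\rho^{\mathcal{T}}\mathcal{X}_a,\rho^{\mathcal{T}}\mathcal{X}_b]$ reduces, upon expanding $[\rho_a^i\mathbf{e}_i,\rho_b^j\mathbf{e}_j]$, to the first structure relation $\rho_a^i\mathbf{e}_i\rho_b^j-\rho_b^i\mathbf{e}_i\rho_a^j=\rho_f^j C_{ab}^f$ of the footnote, and here one must carefully carry the nonholonomy coefficients (the nonzero commutators of the $\mathbf{e}_i$) and see that they cancel correctly. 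Likewise the Jacobi identity for $\left\lfloor\cdot,\cdot\right\rfloor_{\mathcal{T}}$ collapses, on the generators, to the cyclic identity $\sum_{\mathrm{cycl}}(\rho_a^i\partial_i C_{be}^f+C_{be}^d C_{ad}^f)=0$. Finally I would record that the splitting into the $\mathcal{X}_a$-- and $\mathcal{V}_A$--subspaces, together with the N--connections already carried by $\mathbf{E}$ and $\mathbf{P}$ and the assumed h--v--adaptedness of $\rho$ and $\mathbf{T}\pi$, endows $\mathcal{T}^{\mathbf{E}}\mathbf{P}$ with an N--connection $\mathbf{N}:T(\mathcal{T}^{\mathbf{E}}\mathbf{P})=h\oplus v$, so that the prolongation is a genuine Lie d--algebroid and not merely a Lie algebroid; chart--independence of all these data follows because the transition functions of $\mathbf{E}$ and $\mathbf{P}$ induce consistent transition functions on the prolongation, so the local pieces glue over any covering of charts.
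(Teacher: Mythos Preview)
Your proposal is correct and follows the same line the paper takes. The paper does not give a formal proof environment for this Theorem--Definition; instead, immediately after stating it, the paper writes ``To understand consequences of Theorem \ref{theordif} let us consider some local constructions'' and then exhibits precisely the local frame $(\mathcal{X}_a,\mathcal{V}_A)$, the anchor $\rho^\pi(p,b,v)=v$, and the brackets $\left\lfloor\mathcal{X}_a,\mathcal{X}_b\right\rfloor^\pi=C_{ab}^f\mathcal{X}_f$, $\left\lfloor\mathcal{X}_a,\mathcal{V}_B\right\rfloor^\pi=0$, $\left\lfloor\mathcal{V}_A,\mathcal{V}_B\right\rfloor^\pi=0$ that you wrote down, treating the verification of the algebroid axioms as inherited from the holonomic construction in \cite{martinez1,martinez2,dleon1,cortes}. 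Your version is more explicit in that you actually check the fibre dimension, the anchor--homomorphism identity, and Jacobi against the structure relations in the footnote, whereas the paper simply asserts the parenthetical remark that the construction globalizes over charts; but the underlying argument is the same.
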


Similarly to holonomic configurations with trivial N--connection structure,
the prolongation Lie d--algebroid $\mathcal{T}^{\mathbf{E}}\mathbf{P}$ is
called the (nonholonomic) $\mathbf{E}$--tangent bundle to $\pi ,$ which is
also a nonholonomic vector bundle over $\mathbf{P.}$ The corresponding
projection $\tau _{\mathbf{P}}^{\mathbf{E}}$ is just onto the first factor, $%
\tau _{\mathbf{P}}^{\mathbf{E}}(b,v)=b$ being adapted to the N--connection
structures. The elements of $\mathcal{T}^{\mathbf{E}}\mathbf{P}
$ are parameterized by N--adapted triples $(p,b,v) \in \mathcal{T}^{\mathbf{E}}\mathbf{P}\rightarrow $ $%
(b,v)\in \mathcal{T}^{\mathbf{E}}\mathbf{P}$ if that will not result in
ambiguities. The N--adaped anchor $\rho ^{\pi }:\mathcal{T}^{\mathbf{E}}%
\mathbf{P}\rightarrow \mathcal{T}\mathbf{P}$ is given by maps $\rho ^{\pi }$
$(p,b,v)=v,$ i.e. projection onto the third factor when the $h$--components
\ transforms in other $h$--components etc. For more special cases, we can
define  the projection onto the second factor (i.e. a morphism of Lie
d--algebroids over $\pi ),$ $\mathcal{T}\pi :\mathcal{T}^{\mathbf{E}}\mathbf{%
P}\rightarrow \mathbf{E},$ when $\mathcal{T}\pi (p,b,v)=b.$ An element $%
(p_{1},b_{1},v_{1})\in \mathcal{T}^{\mathbf{E}}\mathbf{P}$ is vertical if $%
\mathcal{T}\pi $ $(p_{1},b_{1},v_{1})=b_{1}=0,$ i.e. such elements are of
type $(p,0,v)$ when $v$ is a $\pi $--vertical vector (tangent to $\mathbf{P}$
at point $p).$

To understand consequences of Theorem \ref{theordif} let us consider some
local constructions. In coefficient form, any element of a prolongation Lie
d--algebroid $\mathcal{T}^{\mathbf{E}}\mathbf{P}$ can be parameterized $%
\overline{z}=(p,b,v)\in \mathcal{T}^{\mathbf{E}}\mathbf{P},$ for $%
b=z^{a}e_{a}$ and $v=\rho _{a}^{i}z^{a}\mathbf{e}_{i}+v^{A}\partial _{A},$
for $\partial /\partial y^{A},$ can be decomposed $\overline{z}=z^{a}%
\mathcal{X}_{a}+v^{A}\mathcal{V}_{A}.$ The couple $\left( \mathcal{X}_{a},%
\mathcal{V}_{A}\right) ,$ with vertical $\mathcal{V}_{A},$ defines a local
basis of sections of $\mathcal{T}^{\mathbf{E}}\mathbf{P}.$ For such bases,
we can write $\mathcal{X}_{a}=\mathcal{X}_{a}(p)=\left( e_{a}(\pi (p)),\rho
_{a}^{i}\mathbf{e}_{i\mid p}\right) $ and $\mathcal{V}_{A}=\left( 0,\partial
_{A\mid p}\right) ,$ where partial derivatives and their N--elongations are
taken in a point $p\in S_{x}.$ It is also possible to elaborate on
prolongation Lie d--algebroids a N--adapted exterior differential calculus.
The anchor map $\rho ^{\pi }(Z)=\rho _{a}^{i}Z^{a}\mathbf{e}%
_{i}+V^{A}\partial _{A}$ is an N--elongated operator acting on sections $Z$
with associated decompositions of type $\overline{z}.$ The corresponding Lie
brackets are
\begin{equation*}
\left\lfloor \mathcal{X}_{a},\mathcal{X}_{b}\right\rfloor ^{\pi }=C_{ab}^{f}%
\mathcal{X}_{f},~\left\lfloor \mathcal{X}_{a},\mathcal{V}_{B}\right\rfloor
^{\pi }=0,~~\left\lfloor \mathcal{V}_{A},\mathcal{V}_{B}\right\rfloor ^{\pi
}=0.
\end{equation*}%
Denoting by $\left( \mathcal{X}^{a},\mathcal{V}^{B}\right) $ the dual bases
to $\left( \mathcal{X}_{a},\mathcal{V}_{A}\right) ,$ we can elaborate a
differential calculus for N--adapted differential forms using
\begin{equation}
dx^{i}=\rho _{a}^{i}\mathcal{X}^{a},\mbox{ for  }d\mathcal{X}^{f}=-\frac{1}{2%
}C_{ab}^{f}\mathcal{X}^{a}\wedge \mathcal{X}^{b},\mbox{ and \ }dy^{A}=%
\mathcal{V}^{A},\mbox{ \
for \ }d\mathcal{V}^{A}=0.  \label{extercalc}
\end{equation}

N--connections can be introduced on $\mathcal{T}^{\mathbf{E}}\mathbf{P}$
similarly to (\ref{whitn}).

\begin{definition}
On a prolongation Lie d--algebroid, a N--connection is defined by a $h$--$v$%
--splitting,
\begin{equation}
\mathcal{N}:\mathcal{T}^{\mathbf{E}}\mathbf{P}=h\mathcal{T}^{\mathbf{E}}%
\mathbf{P}\oplus v\mathcal{T}^{\mathbf{E}}\mathbf{P}.  \label{nonalg}
\end{equation}
\end{definition}

We can consider $\mathcal{N}:\mathcal{T}^{E}\mathbf{P\rightarrow }\mathcal{T}%
^{E}\mathbf{P,}$ with $\mathcal{N}^{2}=id,$ as a nonholonomic vector bundle,
and Lie d--algebroid, morphism defining an almost product structure on $\
^{P}\pi :\ T\mathbf{P}\rightarrow \mathbf{P},$ for a smooth map on $T\mathbf{%
P}\backslash \{0\},$ were $\{0\}$ denotes the set of null sections. Any
N--connection $\mathcal{N}$ induces $h$- and $v$--projectors for every
element $\overline{z}=(p,b,v)\in $ $\mathcal{T}^{\mathbf{E}}\mathbf{P},$
when $h(\overline{z})=~^{h}z$ and $v(z)=~^{v}z,$ for $h=\frac{1}{2}(id+%
\mathcal{N})$ and $v=\frac{1}{2}(id-\mathcal{N}).$ The respective $h$- and $%
v $--subspaces are $h\mathcal{T}^{\mathbf{E}}\mathbf{P}=\ker (id-\mathcal{N}%
) $ and $v\mathcal{T}^{\mathbf{E}}\mathbf{P}=\ker (id+\mathcal{N}).$

Let us analyze some local constructions related to $\mathcal{N}$--connection
structures for prolongation Lie d--algebroids. Locally, N--connections are
determined respectively by their coefficients $\mathbf{N=\{}N_{i}^{A}\}$ and
$\mathcal{N}\mathbf{=\{}\mathcal{N}_{a}^{A}\},$ when
\begin{equation}
\mathbf{N=}N_{i}^{A}(x^{k},y^{B})dx^{i}\otimes \partial _{A}\mbox{ \ and \ }%
\mathcal{N}=\mathcal{N}_{a}^{A}\mathcal{X}^{a}\otimes \mathcal{V}_{A}.
\label{nonlalg}
\end{equation}%
Such structures on $T\mathbf{P}$ and $\mathcal{T}^{\mathbf{E}}\mathbf{P}$
are compatible if $\mathcal{N}_{a}^{A}=N_{i}^{A}\rho _{a}^{i}.$ Using $%
\mathcal{N}_{a}^{A},$ we can generate sections $\delta _{a}:=\mathcal{X}_{a}-%
\mathcal{N}_{a}^{A}\mathcal{V}_{A}$ as a local basis of $h\mathcal{T}^{%
\mathbf{E}}\mathbf{P}.$

\begin{corollary}
Any N--connection $\mathcal{N}_{a}^{A}$ on $\mathcal{T}^{E}\mathbf{P}$
determines a N--adapted frame structure%
\begin{equation}
~\mathbf{e}_{\overline{\alpha }}:=\{\delta _{a}=\mathcal{X}_{a}-\mathcal{N}%
_{a}^{C}\mathcal{V}_{C},\mathcal{V}_{A}\},  \label{dderalg}
\end{equation}%
and its dual
\begin{equation}
\mathbf{e}^{\overline{\beta }}:=\{\mathcal{X}^{a},\delta ^{B}=\mathcal{V}%
^{B}+\mathcal{N}_{c}^{B}\mathcal{X}^{c}\}.  \label{ddifalg}
\end{equation}
\end{corollary}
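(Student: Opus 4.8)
The plan is to establish the two assertions of the statement separately: that $\{\delta_a, \mathcal{V}_A\}$ is a genuine local frame respecting the splitting (\ref{nonalg}), and that $\{\mathcal{X}^a, \delta^B\}$ is precisely its dual. Both claims are local, so throughout I would work in a single chart, taking as given the natural basis $(\mathcal{X}_a, \mathcal{V}_A)$ of sections of $\mathcal{T}^{\mathbf{E}}\mathbf{P}$ and its dual $(\mathcal{X}^a, \mathcal{V}^B)$ introduced above, together with their pairings $\langle \mathcal{X}^a, \mathcal{X}_b\rangle = \delta_b^a$, $\langle \mathcal{V}^B, \mathcal{V}_A\rangle = \delta_A^B$, and vanishing mixed pairings.

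First I would note that the passage from $(\mathcal{X}_a, \mathcal{V}_A)$ to $(\delta_a = \mathcal{X}_a - \mathcal{N}_a^C\mathcal{V}_C,\ \mathcal{V}_A)$ is effected by a block--triangular change of basis whose diagonal blocks are the identities $\delta_a^b$ and $\delta_A^B$ and whose only off--diagonal block is $-\mathcal{N}_a^C$. Its determinant is therefore $1$, so the transformation is invertible and $\{\delta_a, \mathcal{V}_A\}$ is again a local basis. That this basis is N--adapted then follows from applying the projectors $h = \frac{1}{2}(id + \mathcal{N})$ and $v = \frac{1}{2}(id - \mathcal{N})$: since $\delta_a$ was constructed to lie in $h\mathcal{T}^{\mathbf{E}}\mathbf{P} = \ker(id - \mathcal{N})$ and each $\mathcal{V}_A$ lies in $v\mathcal{T}^{\mathbf{E}}\mathbf{P} = \ker(id + \mathcal{N})$, the frame (\ref{dderalg}) splits exactly along (\ref{nonalg}).

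For the dual coframe the approach is a direct contraction of all four pairings of (\ref{ddifalg}) against (\ref{dderalg}). The relations $\langle \mathcal{X}^a, \delta_b\rangle = \delta_b^a$ and $\langle \mathcal{X}^a, \mathcal{V}_B\rangle = 0$ are immediate, and $\langle \delta^B, \mathcal{V}_A\rangle = \delta_A^B$ follows at once since $\mathcal{N}_c^B\mathcal{X}^c$ annihilates $\mathcal{V}_A$. The one computation requiring care is $\langle \delta^B, \delta_a\rangle$: here the term $+\mathcal{N}_c^B\delta_a^c$ coming from $\mathcal{N}_c^B\mathcal{X}^c$ paired with $\mathcal{X}_a$ cancels the term $-\mathcal{N}_a^C\delta_C^B$ coming from $\mathcal{V}^B$ paired with $-\mathcal{N}_a^C\mathcal{V}_C$, so that the pairing vanishes. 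Assembling the four results gives $\mathbf{e}^{\overline{\beta}}(\mathbf{e}_{\overline{\alpha}}) = \delta_{\overline{\alpha}}^{\overline{\beta}}$, which is exactly the duality of (\ref{dderalg}) and (\ref{ddifalg}).

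I do not expect any serious obstacle, since the statement is a coordinate identity secured by a unimodular triangular change of frame; the only point demanding attention is the single cancellation in $\langle \delta^B, \delta_a\rangle$, which hinges on the antisymmetry between the $-\mathcal{N}_a^C$ correction in $\delta_a$ and the $+\mathcal{N}_c^B$ correction in $\delta^B$. If one additionally wishes the frame on $\mathcal{T}^{\mathbf{E}}\mathbf{P}$ to descend consistently to an N--adapted frame on $T\mathbf{P}$ under the anchor $\rho^{\pi}$, one would invoke the compatibility condition $\mathcal{N}_a^A = N_i^A\rho_a^i$ recorded after (\ref{nonlalg}); but this plays no role in the purely internal statement of the corollary.
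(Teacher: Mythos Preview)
Your proposal is correct and follows essentially the same approach as the paper, which simply states that ``a proof follows from an explicit construction [of] such N--adapted frames'' and that the formulas can then be carried to arbitrary frames. You have merely supplied the details the paper omits: the unimodular triangular change of basis showing $\{\delta_a,\mathcal{V}_A\}$ is a frame, and the four pairings verifying duality, including the one nontrivial cancellation in $\langle\delta^B,\delta_a\rangle$.
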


\begin{proof}
In above formulas, the \textquotedblright overlined\textquotedblright\ small
Greek indices split $\overline{\alpha }=(a,A)$ if an arbitrary vector
bundles $\mathbf{P}$ is considered, or $\overline{\alpha }=(a,b)$ for $%
\mathbf{P=E}$. A proof follows from an explicit construction such N--adapted
frames. Then the formulas can be considered for arbitrary frames of
references. $\square $
\end{proof}

\vskip5pt

The N--adapted bases (\ref{dderalg}) satisfy the relations $\mathbf{e}_{%
\overline{\alpha }}~\mathbf{e}_{\overline{\beta }}-~\mathbf{e}_{\overline{%
\beta }}~\mathbf{e}_{\overline{\alpha }}=W_{\overline{\alpha }\overline{%
\beta }}^{\overline{\gamma }}\mathbf{e}_{\overline{\gamma }},$ with
nontrivial anholonomy coefficients $W_{\overline{\alpha }\overline{\beta }}^{%
\overline{\gamma }}=\{C_{ab}^{f},\Omega _{ab}^{C},\partial _{B}\mathcal{N}%
_{c}^{C}\}.$ The corresponding generalized Lie brackets are
\begin{equation}
\left\lfloor \delta _{a},\delta _{b}\right\rfloor ^{\pi }=C_{ab}^{f}\delta
_{f}+\Omega _{ab}^{C}\mathcal{V}_{C},~\left\lfloor \delta _{a},\mathcal{V}%
_{B}\right\rfloor ^{\pi }=(\partial _{B}\mathcal{N}_{a}^{C})\mathcal{V}%
_{C},~~\left\lfloor \mathcal{V}_{A},\mathcal{V}_{B}\right\rfloor ^{\pi }=0.
\label{extercalc1}
\end{equation}

\begin{definition}
The curvature of N--connection $\mathcal{N}_{a}^{A}$ is by definition the
Neig\-enhuis tensor $~^{h}N$ of the operator $h,$ {\small
\begin{equation*}
\ ^{h}N(\cdot ,\cdot )=\left\lfloor h\cdot ,h\cdot \right\rfloor ^{\pi
}-h\left\lfloor h\cdot ,\cdot \right\rfloor ^{\pi }-h\left\lfloor \cdot
,h\cdot \right\rfloor ^{\pi }+h^{2}\left\lfloor h\cdot ,h\cdot \right\rfloor
^{\pi }=-\frac{1}{2}\Omega _{ab}^{C}\mathcal{X}^{a}\wedge \mathcal{X}%
^{b}\otimes \mathcal{V}_{C},
\end{equation*}%
} where
\begin{equation}
\Omega _{ab}^{C}=\delta _{b}\mathcal{N}_{a}^{C}-\delta _{a}\mathcal{N}%
_{b}^{C}+C_{ab}^{f}\mathcal{N}_{f}^{C}.  \label{cncalgebr}
\end{equation}
\end{definition}

It should be noted that above formulas for $\mathcal{T}^{\mathbf{E}}\mathbf{P%
}$ mimic (on sections of $E$ for $\mathbf{P=E}$) the geometry of tangent
bundles and/or nonholonomic manifolds of even dimension, endowed with
N--connection structure. On applications in modern classical and quantum
gravity, with various modifications, and nonholonomic Ricci flow theory, see
Refs. \cite{vrflg,vricci1,vncricci}. If $\mathbf{P\neq E,}$ we model
nonholonomic vector bundle and generalized Riemann geometries on sections of
$\mathcal{T}^{\mathbf{E}}\mathbf{E.}$

\subsection{Canonical structures on Lie d--algebroids}

Almost K\"{a}hler Lie algebroid geometries can be modelled on prolongation
of Lie d--algebroids.

\subsubsection{d--connections and d--metrics on $\mathcal{T}^{\mathbf{E}}%
\mathbf{P}$}

\begin{definition}
\label{defdcon}A distinguished connection, d--connection, $\mathcal{D}=(h%
\mathcal{D},v\mathcal{D})$, on $\mathcal{T}^{\mathbf{E}}\mathbf{P}$ is a
linear connection preserving under parallelism the splitting (\ref{nonalg}).
\end{definition}

The formulas for distinguished torsion and curature can be generalized for
prolongation Lie d--algebroids.

\begin{definition}
The torsion and curvature for any d--connection $\mathcal{D}$ on $\mathcal{T}%
^{E}\mathbf{P}$ are defined respectively by
\begin{eqnarray*}
\mathcal{T}(\overline{x},\overline{y}) &:=&\mathcal{D}_{\overline{x}}%
\overline{y}-\mathcal{D}_{\overline{y}}\overline{x}+\left\lfloor \overline{x}%
,\overline{y}\right\rfloor ^{\pi },\mbox{ and } \\
\mathcal{R}(\overline{x},\overline{y})\overline{z} &:=&\left( \mathcal{D}_{%
\overline{x}}~\mathcal{D}_{\overline{y}}-~\mathcal{D}_{\overline{y}}\mathcal{%
D}_{\overline{x}}-\mathcal{D}_{\left\lfloor \overline{x},\overline{y}%
\right\rfloor ^{\pi }}\right) \overline{z}.
\end{eqnarray*}
\end{definition}

Let us consider sections $\overline{x},\overline{y},\overline{z}$ \ of $%
\mathcal{T}^{E}\mathbf{P}$ when, for instance $\overline{z}=z^{\overline{%
\alpha }}\mathbf{e}_{\overline{\alpha }}=z^{a}\delta _{a}+z^{A}\mathcal{V}%
_{A},$ or $\overline{z}=h\overline{z}+v\overline{z}.$ Using the rules of
absolute differentiation (\ref{extercalc}) for N--adapted bases $~\mathbf{e}%
_{\overline{\alpha }}:=\{\delta _{a},\mathcal{V}_{A}\}$ and $\mathbf{e}^{%
\overline{\beta }}:=\{\mathcal{X}^{\alpha },\delta ^{B}\}$ and associating
to $\mathcal{D}$ a d--connection 1--form $\mathbf{\Gamma }_{\ \overline{%
\alpha }}^{\overline{\gamma }}:=\mathbf{\Gamma }_{\ \overline{\alpha }%
\overline{\beta }}^{\overline{\gamma }}\mathbf{e}^{\overline{\beta }},$ we
can compute the N--adapted coefficients of the torsion $\mathcal{T}=\{%
\mathbf{T}_{\ \overline{\beta }\overline{\gamma }}^{\overline{\alpha }}\}$
and curvature $\mathcal{R}=\{\mathbf{R}_{\ \overline{\beta }\overline{\gamma
}\overline{\delta }}^{\overline{\alpha }}\}$ 2--forms, see details in
Appendix.

We can introduce a metric structure as a nondegenerate symmetric second rank
tensor $\overline{\mathbf{g}}=\{$ $\mathbf{g}_{\overline{\alpha }\overline{%
\beta }}\}\mathbf{.}$

\begin{proposition}
--\textbf{Definition.} Any metric structure on $\mathcal{T}^{E}\mathbf{P}$
can be represented in N--adapted form as a d--metric $\overline{\mathbf{g}}%
\mathbf{=}h\mathbf{g\oplus }v\mathbf{g}$.
\end{proposition}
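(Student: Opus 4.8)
The plan is to show that any nondegenerate symmetric second-rank tensor $\overline{\mathbf{g}}=\{\mathbf{g}_{\overline{\alpha}\overline{\beta}}\}$ on $\mathcal{T}^{\mathbf{E}}\mathbf{P}$ splits, in the N-adapted frame, into a purely horizontal block and a purely vertical block, the cross-blocks being removable by passing to the adapted coframe (\ref{dderalg})–(\ref{ddifalg}). This is the algebroid analogue of the classical statement that a Riemannian metric on a manifold with N-connection becomes block-diagonal in the adapted basis; here the N-connection is the splitting (\ref{nonalg}) with projectors $h=\frac{1}{2}(id+\mathcal{N})$ and $v=\frac{1}{2}(id-\mathcal{N})$ already introduced.

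First I would use the projectors $h$ and $v$ to decompose an arbitrary section $\overline{z}=h\overline{z}+v\overline{z}$, and define the two pieces by restriction, $h\mathbf{g}(\cdot,\cdot):=\overline{\mathbf{g}}(h\cdot,h\cdot)$ and $v\mathbf{g}(\cdot,\cdot):=\overline{\mathbf{g}}(v\cdot,v\cdot)$. Writing $\overline{\mathbf{g}}=\overline{\mathbf{g}}\big((h+v)\cdot,(h+v)\cdot\big)$ and expanding by bilinearity gives four terms, the two diagonal ones being $h\mathbf{g}$ and $v\mathbf{g}$ and the two off-diagonal ones $\overline{\mathbf{g}}(h\cdot,v\cdot)$, $\overline{\mathbf{g}}(v\cdot,h\cdot)$. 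The content of the statement is that one may choose the N-adapted frame so that these mixed terms are absorbed; concretely, in the basis $\mathbf{e}_{\overline{\alpha}}=\{\delta_a,\mathcal{V}_A\}$ the coefficients $\mathbf{g}_{aA}$ can be set to zero by the very shift $\delta_a=\mathcal{X}_a-\mathcal{N}_a^C\mathcal{V}_C$ that defines the N-connection, exactly as the off-diagonal metric coefficients fix $N_i^a$ in the manifold case (\ref{lfsm}). I would then set $h\mathbf{g}=\mathbf{g}_{ab}\,\mathcal{X}^a\otimes\mathcal{X}^b$ and $v\mathbf{g}=\mathbf{g}_{AB}\,\delta^A\otimes\delta^B$, using the dual coframe (\ref{ddifalg}), and verify that $\overline{\mathbf{g}}=h\mathbf{g}\oplus v\mathbf{g}$ holds as an identity of tensors, not merely of frames.

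The remaining steps are: check that $h\mathbf{g}$ and $v\mathbf{g}$ are each symmetric and nondegenerate on their respective subspaces $h\mathcal{T}^{\mathbf{E}}\mathbf{P}=\ker(id-\mathcal{N})$ and $v\mathcal{T}^{\mathbf{E}}\mathbf{P}=\ker(id+\mathcal{N})$, which follows from the symmetry and nondegeneracy of $\overline{\mathbf{g}}$ together with the complementarity of the two kernels; and confirm that the construction is independent of the chart, so that it globalizes over the covering of charts referred to in Theorem \ref{theordif}. For the "Definition" half, I would record the block form $\overline{\mathbf{g}}=\mathbf{g}_{ab}\,\mathcal{X}^a\otimes\mathcal{X}^b+\mathbf{g}_{AB}\,\delta^A\otimes\delta^B$ as the defining expression of a d-metric, matching Definition \ref{defdcon} for d-connections in that it is adapted to the splitting (\ref{nonalg}).

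The step I expect to be the main obstacle is the legitimacy of absorbing the mixed coefficients $\mathbf{g}_{aA}$ by the N-adapted shift: on a prolongation Lie d-algebroid the adapted frame is tied to the anchor through the compatibility condition $\mathcal{N}_a^A=N_i^A\rho_a^i$, so one must verify that the off-diagonal components genuinely redefine the N-connection coefficients consistently with this constraint, rather than imposing an extra condition on $\overline{\mathbf{g}}$. Once it is checked that any nondegenerate symmetric $\overline{\mathbf{g}}$ determines such adapted $\mathcal{N}_a^A$ uniquely (equivalently, that the mixed block is pure gauge under the allowed frame transforms $\mathbf{e}_{\overline{\alpha}'}=\mathbf{e}_{\ \overline{\alpha}'}^{\overline{\alpha}}\mathbf{e}_{\overline{\alpha}}$), the block decomposition and its frame-independence follow by routine bilinear algebra, and the proposition is established.
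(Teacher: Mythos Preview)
Your approach is correct and matches the paper's: the paper's proof is a one--line pointer to the explicit construction in the N--adapted coframe, i.e.\ formula (\ref{dm}) together with its off--diagonal representation (\ref{offd}), which is precisely what your projector--and--bilinearity argument unpacks in detail.

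One remark on the step you flag as the main obstacle: the compatibility relation $\mathcal{N}_a^A=N_i^A\rho_a^i$ is \emph{not} a constraint here. That relation merely says when an N--connection on $\mathcal{T}^{\mathbf{E}}\mathbf{P}$ is induced from one on $T\mathbf{P}$; the Proposition works with an arbitrary N--connection $\mathcal{N}=\{\mathcal{N}_a^A\}$ on $\mathcal{T}^{\mathbf{E}}\mathbf{P}$ in the sense of (\ref{nonalg}), and the mixed components $\mathbf{g}_{aA}$ of a given metric simply \emph{define} the $\mathcal{N}_a^A$ via (\ref{offd}) without any reference to the anchor. So the absorption step you describe goes through unconditionally, and your worry can be dropped.
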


\begin{proof}
It follows from an explicit construction adapted to a Whitney sum (\ref%
{nonalg}) and with respect to N--adapted frames,
see formula (\ref{dm}). $\square $
\end{proof}

\vskip5pt

If a d--metric $\overline{\mathbf{g}}$ and a d--connection $\mathcal{D}$ are
independent geometric structures, such values are characterized
(additionally to $\mathcal{T}$ and $\mathcal{R)}$ by a nonmetricity field $%
\mathcal{Q}(\overline{y}):=\mathcal{D}_{\overline{y}}\overline{\mathbf{g}},$
with N--adapted coefficients $\mathbf{Q}_{\ \overline{\alpha }\overline{%
\beta }}^{\overline{\gamma }}=\mathcal{D}^{\overline{\gamma }}\overline{%
\mathbf{g}}_{\overline{\alpha }\overline{\beta }}$.

\begin{proposition}
--\textbf{Definition.} The data $\left( \overline{\mathbf{g}},\mathcal{D}%
\right) $ are metric compatible if $\mathcal{Q}=\mathcal{D}\overline{\mathbf{%
g}}=0$ holds in N--adapted form for any h-- and v--components.
\end{proposition}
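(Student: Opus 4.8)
The plan is to show that the coordinate--free condition $\mathcal{Q}=\mathcal{D}\overline{\mathbf{g}}=0$ is equivalent to the separate vanishing of its h-- and v--projected coefficients, so that the notion of metric compatibility is both well posed and independent of the chosen N--adapted frame. First I would expand the nonmetricity in the N--adapted frame $\mathbf{e}_{\overline{\alpha}}=\{\delta_{a},\mathcal{V}_{A}\}$ of (\ref{dderalg}), writing its coefficients via the d--connection $1$--form $\mathbf{\Gamma}_{\ \overline{\alpha}}^{\overline{\gamma}}$ introduced above as $\mathbf{Q}_{\ \overline{\alpha}\overline{\beta}}^{\overline{\gamma}}=\mathcal{D}_{\overline{\gamma}}\mathbf{g}_{\overline{\alpha}\overline{\beta}}=\mathbf{e}_{\overline{\gamma}}(\mathbf{g}_{\overline{\alpha}\overline{\beta}})-\mathbf{\Gamma}_{\ \overline{\alpha}\overline{\gamma}}^{\overline{\mu}}\mathbf{g}_{\overline{\mu}\overline{\beta}}-\mathbf{\Gamma}_{\ \overline{\beta}\overline{\gamma}}^{\overline{\mu}}\mathbf{g}_{\overline{\alpha}\overline{\mu}}$.

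Next I would invoke the two block structures. From the preceding Proposition--Definition, the d--metric $\overline{\mathbf{g}}=h\mathbf{g}\oplus v\mathbf{g}$ is block diagonal with respect to the splitting (\ref{nonalg}), so the mixed coefficients vanish, $\mathbf{g}_{aB}=\mathbf{g}_{Ab}=0$. From Definition \ref{defdcon}, the d--connection $\mathcal{D}=(h\mathcal{D},v\mathcal{D})$ preserves this splitting under parallelism; translated to coefficients, covariant differentiation of an h--section produces no v--part and conversely, i.e. the mixing coefficients of the $1$--form vanish, $\mathbf{\Gamma}_{\ a\overline{\gamma}}^{A}=\mathbf{\Gamma}_{\ A\overline{\gamma}}^{a}=0$. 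Substituting both facts into the expansion and inspecting a candidate mixed coefficient $\mathcal{D}_{\overline{\gamma}}\mathbf{g}_{aB}$, the inhomogeneous term is zero because $\mathbf{g}_{aB}=0$, while in each connection term the only surviving coefficient ($\mathbf{\Gamma}^{c}_{\ a\overline{\gamma}}$, respectively $\mathbf{\Gamma}^{C}_{\ B\overline{\gamma}}$) is contracted against a vanishing mixed metric component $\mathbf{g}_{cB}$, respectively $\mathbf{g}_{aC}$. Hence every mixed coefficient of $\mathcal{Q}$ is identically zero.

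It then follows that the nonmetricity is itself block diagonal, $\mathcal{Q}={}^{h}\mathcal{Q}\oplus {}^{v}\mathcal{Q}$, where ${}^{h}\mathcal{Q}$ collects the pure h--h coefficients built from $h\mathcal{D}(h\mathbf{g})$ and ${}^{v}\mathcal{Q}$ the pure v--v coefficients built from $v\mathcal{D}(v\mathbf{g})$. Therefore $\mathcal{Q}=0$ holds globally if and only if these two families vanish separately, which is precisely the statement that $\mathcal{D}\overline{\mathbf{g}}=0$ holds in N--adapted form for all h-- and v--components; since the projectors $h=\frac{1}{2}(id+\mathcal{N})$ and $v=\frac{1}{2}(id-\mathcal{N})$ are intrinsic, the equivalence does not depend on the N--adapted frame, so the definition is legitimate.

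The step I expect to be the main obstacle is making precise the passage from the invariant property ``$\mathcal{D}$ preserves the splitting (\ref{nonalg})'' to the coefficient identities $\mathbf{\Gamma}_{\ a\overline{\gamma}}^{A}=\mathbf{\Gamma}_{\ A\overline{\gamma}}^{a}=0$. This amounts to evaluating $h\mathcal{D}_{\overline{x}}(v\overline{y})$ and $v\mathcal{D}_{\overline{x}}(h\overline{y})$ through the projectors $h$ and $v$ and showing they vanish for all sections $\overline{x},\overline{y}$ -- essentially the computation that characterizes $h\mathcal{D}$ and $v\mathcal{D}$ from Definition \ref{defdcon}. Once that vanishing is established, the remainder is routine index bookkeeping over the block decomposition.
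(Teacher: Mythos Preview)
Your approach is essentially the paper's: expand $\mathcal{D}\overline{\mathbf{g}}$ in the N--adapted frame, use the block--diagonality of $\overline{\mathbf{g}}$ together with the preservation of the splitting by $\mathcal{D}$ to kill the mixed pieces, and conclude that $\mathcal{Q}=0$ reduces to the vanishing of the diagonal blocks. One small correction: your final decomposition ``${}^{h}\mathcal{Q}$ from $h\mathcal{D}(h\mathbf{g})$, ${}^{v}\mathcal{Q}$ from $v\mathcal{D}(v\mathbf{g})$'' drops the cross--differentiation pieces; the paper in fact lists \emph{four} independent N--adapted conditions, $\mathcal{D}_{f}\mathbf{g}_{ab}=0$, $\mathcal{D}_{A}\mathbf{g}_{ab}=0$, $\mathcal{D}_{f}\mathbf{g}_{AB}=0$, $\mathcal{D}_{C}\mathbf{g}_{AB}=0$, since the differentiation index $\overline{\gamma}$ ranges over both $h$ and $v$ directions independently of the metric block.
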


\begin{proof}
It follows from a straightforward computation when the coefficients of
d--metric $\mathbf{g}_{\overline{\alpha }\overline{\beta }}$ (\ref{dm}) are
introduced into $\mathcal{D}_{\overline{y}}\overline{\mathbf{g}}=0$, for $%
\overline{y}=y^{\overline{\alpha }}\mathbf{e}_{\overline{\alpha }%
}=y^{a}\delta _{a}+y^{A}\mathcal{V}_{A}.$ Such a condition splits into
respective conditions for $h$-$v$--components which with respect to
N--adapted frames are $\mathcal{D}_{f}\mathbf{g}_{ab}=0,\mathcal{D}_{A}%
\mathbf{g}_{ab}=0,\mathcal{D}_{f}\mathbf{g}_{AB}=0,\mathcal{D}_{C}\mathbf{g}%
_{AB}=0.$ $\square $
\end{proof}

\subsubsection{The canonical d--connection}

On $\mathcal{T}^{\mathbf{E}}\mathbf{P,}$ we can define two important metric
compatible linear connection structures completely determined by  $\mathbf{g}_{\overline{\alpha }%
\overline{\beta }}.$ The first one is the standard torsionless Levi--Civita
connection $\mathbf{\nabla }$ (which is not adapted to the N--connection
splitting) and the second one is a Lie d--algebroid generalization, $%
\widehat{\mathbf{D}}\rightarrow \widehat{\mathcal{D}}=h\widehat{\mathcal{D}}%
+v\widehat{\mathcal{D}},$ when the $h$-$v$--splitting is determined by (\ref%
{nonalg}).

\begin{theorem}
\label{thcdc}There is a canonical d--connection $\widehat{\mathcal{D}}$
completely defined by data $(\mathcal{N},\mathbf{g}_{\overline{\alpha }%
\overline{\beta }})$\ for which $\widehat{\mathcal{D}}\overline{\mathbf{g}}%
=0 $ and $h$- and $v$-torsions of $\widehat{\mathcal{T}}$ are prescribed,
respectively, to be with N--adapted coefficients $\widehat{T}_{\
bf}^{a}=C_{\ bf}^{a}$ and $\widehat{T}_{\ BC}^{A}=0.$
\end{theorem}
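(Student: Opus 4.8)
The plan is to construct $\widehat{\mathcal{D}}$ explicitly in N--adapted coefficients and then verify that it has the two required properties, metric compatibility and the prescribed torsion structure. This mirrors the classical construction of the canonical d--connection on tangent bundles and nonholonomic manifolds (as referenced in \cite{vrflg}), transported to the prolongation Lie d--algebroid $\mathcal{T}^{\mathbf{E}}\mathbf{P}$ where the anholonomy is governed by the coefficients $W_{\overline{\alpha}\overline{\beta}}^{\overline{\gamma}}=\{C_{ab}^{f},\Omega_{ab}^{C},\partial_{B}\mathcal{N}_{c}^{C}\}$ appearing in (\ref{extercalc1}). First I would write the general decomposition of an arbitrary d--connection's coefficients $\mathbf{\Gamma}_{\ \overline{\alpha}\overline{\beta}}^{\overline{\gamma}}$ into its $h$-- and $v$--blocks, respecting the splitting (\ref{nonalg}), and record the N--adapted torsion formula obtained from the definition $\mathcal{T}(\overline{x},\overline{y})=\mathcal{D}_{\overline{x}}\overline{y}-\mathcal{D}_{\overline{y}}\overline{x}+\lfloor\overline{x},\overline{y}\rfloor^{\pi}$, being careful that the bracket $\lfloor\cdot,\cdot\rfloor^{\pi}$ contributes the algebroid structure functions $C_{ab}^{f}$ and the curvature terms $\Omega_{ab}^{C}$ rather than the naive frame commutators.

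Next I would impose the defining conditions as a linear system on the connection coefficients. Metric compatibility $\widehat{\mathcal{D}}\,\overline{\mathbf{g}}=0$ splits, exactly as in the preceding Proposition--Definition, into the four block conditions $\widehat{\mathcal{D}}_{f}\mathbf{g}_{ab}=0$, $\widehat{\mathcal{D}}_{A}\mathbf{g}_{ab}=0$, $\widehat{\mathcal{D}}_{f}\mathbf{g}_{AB}=0$, $\widehat{\mathcal{D}}_{C}\mathbf{g}_{AB}=0$. The prescribed torsion conditions $\widehat{T}_{\ bf}^{a}=C_{\ bf}^{a}$ and $\widehat{T}_{\ BC}^{A}=0$ then fix the remaining freedom. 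The standard device is to write a Koszul--type formula: solving metric compatibility together with the torsion prescription yields each block coefficient as an explicit combination of $\delta$-- and $\mathcal{V}$--derivatives of $\mathbf{g}_{\overline{\alpha}\overline{\beta}}$, contracted with the inverse metric and with the structure functions $C_{ab}^{f}$, $\Omega_{ab}^{C}$, and $\partial_{B}\mathcal{N}_{c}^{C}$. I would produce the four coefficient blocks (the $h$-- and $v$--covariant pieces) and then substitute them back to confirm that they simultaneously annihilate the metric and reproduce the stated torsions; the full closed-form expressions I would relegate to the Appendix, following the paper's convention.

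The key point guaranteeing existence is that the linear algebraic system for the coefficients is determined: metric compatibility supplies enough equations to pin down the symmetric part of each block, while the torsion prescription pins down the antisymmetric part, so no overdetermination arises and the nondegeneracy $\det|\mathbf{g}_{\overline{\alpha}\overline{\beta}}|\neq 0$ lets one invert the metric. Uniqueness follows because any two solutions differ by a tensor whose symmetric part is forced to vanish by compatibility and whose antisymmetric part is forced to vanish by the torsion prescription. The phrase ``completely defined by data $(\mathcal{N},\mathbf{g}_{\overline{\alpha}\overline{\beta}})$'' is then exactly the statement that the Koszul formula involves only these data.

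The main obstacle, and the point requiring the most care, is bookkeeping the algebroid correction terms in the bracket $\lfloor\cdot,\cdot\rfloor^{\pi}$. On an ordinary tangent bundle the N--adapted frame commutators produce only $\Omega_{ab}^{C}$ and $\partial_{B}\mathcal{N}_{a}^{C}$; here the genuine Lie algebroid structure injects the extra terms $C_{ab}^{f}\delta_{f}$ into $\lfloor\delta_a,\delta_b\rfloor^{\pi}$ (cf.\ (\ref{extercalc1})). This is precisely why the prescribed $h$--torsion is $\widehat{T}_{\ bf}^{a}=C_{\ bf}^{a}$ rather than zero: one cannot demand a torsionless $h$--component because the algebroid bracket obstructs it, and the natural choice is to absorb the structure constants into the torsion rather than into the connection coefficients. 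I would verify that with this prescription the anchor compatibility $\rho(e_a)=\rho_a^i\mathbf{e}_i$ and the algebroid identities quoted in the footnote (the homomorphism condition on $\rho$ and the Jacobi identity for $C_{ab}^{f}$) are exactly what make the resulting coefficients consistent and the construction well defined independent of the local chart.
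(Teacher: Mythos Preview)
Your proposal is correct and follows essentially the same route as the paper: the paper's proof consists precisely of exhibiting the explicit N--adapted coefficients $\widehat{\mathbf{\Gamma}}_{\ \overline{\beta}\overline{\gamma}}^{\overline{\alpha}}=(\widehat{L}_{bf}^{a},\widehat{L}_{Bf}^{A};\widehat{B}_{bC}^{a},\widehat{B}_{BC}^{A})$ in the Appendix (formula (\ref{candcon})) via a Koszul-type construction and then verifying by direct substitution that $\widehat{\mathcal{D}}\overline{\mathbf{g}}=0$ and that the torsion components (\ref{dtors}) reduce to $\widehat{T}_{\ bf}^{a}=C_{\ bf}^{a}$, $\widehat{T}_{\ BC}^{A}=0$. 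Your added uniqueness argument (symmetric part fixed by compatibility, antisymmetric part fixed by the torsion prescription) is a welcome supplement the paper leaves implicit.

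One small correction of phrasing: you write that the natural choice is to ``absorb the structure constants into the torsion rather than into the connection coefficients,'' but in fact the paper's explicit formula for $\widehat{L}_{bf}^{a}$ \emph{does} contain a contorsion-type term $\tfrac{1}{2}\mathbf{g}^{ae}(\mathbf{g}_{bd}C_{\ fe}^{d}+\mathbf{g}_{fd}C_{\ eb}^{d}-\mathbf{g}_{ed}C_{\ bf}^{d})$ built from the algebroid structure functions. The point is rather that, once metric compatibility is imposed, the antisymmetric part of $\widehat{L}_{\ bf}^{a}$ together with the bracket contribution in the torsion formula combine to leave exactly $\widehat{T}_{\ bf}^{a}=C_{\ bf}^{a}$; the $C$'s appear in both places in a coordinated way. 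This does not affect the validity of your plan, only the wording.
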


\begin{proof}
See proof in section \ref{assectcan}. $\square $
\end{proof}

\vskip5pt
For trivial algebroid structures, \ $\widehat{\mathcal{D}}\rightarrow
\widehat{\mathbf{D}}$ as on usual nonholonomic manifolds and/or vector
bundles.

\begin{remark}
\begin{itemize}
\item \label{remdistalg}There is a canonical distortion relation $\widehat{%
\mathcal{D}}=\overline{\mathbf{\nabla }}+\widehat{\mathcal{Z}},$ when both
linear connections $\widehat{\mathcal{D}}$ and $\overline{\mathbf{\nabla }}$
(the last one is the Levi--Civita connection) and the distorting d--tensor $%
\widehat{\mathcal{Z}}$ are defined by the same data $(\mathcal{N},\mathbf{g}%
_{\overline{\alpha }\overline{\beta }})$.  In explicit form, the N--adapted
coefficients of such values are computed following formulas (\ref{candcon})
and (\ref{distrel1}).

\item We note that $h\widehat{\mathcal{T}}^{\alpha }=0$ for $\widehat{%
\mathbf{D}}$ on $\mathbf{TM}$ but $h\widehat{\mathcal{T}}^{\alpha }\neq 0$
for $\widehat{\mathcal{D}}$ on $\mathcal{T}^{\mathbf{E}}\mathbf{P.}$ The
formulas for $\widehat{L}_{bf}^{a}$ \ in (\ref{candcon}) contain additional
terms with $C_{\ bf}^{a}$ which results in nontrivial $\widehat{T}_{\
bf}^{a}=C_{\ bf}^{a}$ and additional terms in N--adapted coefficients $%
\mathbf{R}_{\ ebf}^{a}$ and $\mathbf{R}_{\ Bbf}^{A}$ of curvature (\ref%
{dcurv}).
\end{itemize}
\end{remark}

In order to generate exact solutions, it may be more convenient to work with
an auxiliary d--connection $\ ^{c}\widehat{\mathcal{D}}:=\mathbf{\nabla }+\
^{c}\widehat{\mathcal{Z}}$ for which $h\ ^{c}\widehat{\mathcal{T}}^{\alpha
}=0$ and $v\ ^{c}\widehat{\mathcal{T}}^{\alpha }=0.$ In N--adapted form this
results in $\ ^{c}\widehat{T}_{\ bf}^{a}=0$ and $\ ^{c}\widehat{T}_{\
BC}^{A}=0$ but, in general, $\widehat{\mathcal{R}}\neq \ ^{c}\widehat{%
\mathcal{R}}$ and $\widehat{\mathcal{T}}\neq \ ^{c}\widehat{\mathcal{T}}.$
The non--trivial Lie d--algebroid structure is encoded in $\ ^{c}\widehat{%
\mathcal{Z}}$ via structure functions $\rho _{a}^{i}$ and $C_{\ bf}^{a}$ and
N--elongated frames (\ref{dderalg}) and (\ref{ddifalg}). The N--adapted
coefficients of $\ ^{c}\widehat{\mathcal{D}}$ are computed $\ ^{c}\widehat{L}%
_{bf}^{a}=\frac{1}{2}\mathbf{g}^{ae}\left( \delta _{f}\mathbf{g}_{be}+\delta
_{b}\mathbf{g}_{fe}-\delta _{e}\mathbf{g}_{bf}\right) $ and $\ ^{c}\widehat{L%
}_{Bf}^{A}=\widehat{L}_{Bf}^{A},\ ^{c}\widehat{B}_{\beta C}^{\alpha }=%
\widehat{B}_{\beta C}^{\alpha },\ \ ^{c}\widehat{B}_{BC}^{A}=\ \widehat{B}%
_{BC}^{A}$ are those from (\ref{candcon}).

\subsection{Almost symplectic geometric data}

\subsubsection{Semi--spray configurations and N--connections}

We show how a canonical almost symplectic structure can be generated on $%
\mathcal{T}^{\mathbf{E}}\mathbf{P}$ by any regular effective Lagrange
function $\mathcal{L}$.

\begin{lemma}
Prescribing any (effective) Lagrangian $\mathcal{L},$ we can construct a
canonical N--connection $~^{q}\mathcal{N}:=-\mathcal{L}i_{q}S$ defined by a
semi--spray $q=y^{a}\mathcal{X}_{a}+q^{A}\mathcal{V}_{A}$ and Lie derivative
$\mathcal{L}i_{q}$ acting on any $X\in Sec(T\mathbf{E})$ following formula $%
~^{q}\mathcal{N}(X)=-\left\lfloor q,SX\right\rfloor ^{\pi }+S\left\lfloor
q,X\right\rfloor ^{\pi }.$
\end{lemma}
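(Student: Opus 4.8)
The plan is to show that the $(1,1)$-tensor field ${}^{q}\mathcal{N} = -\mathcal{L}i_q S$ is an almost product structure on $\mathcal{T}^{\mathbf{E}}\mathbf{P}$, i.e. $({}^{q}\mathcal{N})^2 = id$, and that its $\pm 1$ eigenspace decomposition reproduces the $h$-$v$ splitting (\ref{nonalg}). This is the Lie d--algebroid analogue of the classical theorem of Klein and Grifone attaching a nonlinear connection to a semi--spray, so I would follow that strategy, translating each step through the generalized brackets (\ref{extercalc1}).

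First I would fix the vertical endomorphism (almost tangent structure) $S$ on $\mathcal{T}^{\mathbf{E}}\mathbf{E}$ by $S(\mathcal{X}_a) = \mathcal{V}_a$ and $S(\mathcal{V}_A) = 0$, so that $S^2 = 0$ and $\ker S = \mathrm{im}\, S = v\mathcal{T}^{\mathbf{E}}\mathbf{E}$; I would also record the semi--spray condition $Sq = \mathcal{C}$, where $\mathcal{C} = y^a\mathcal{V}_a$ is the canonical Liouville section and $q = y^a\mathcal{X}_a + q^A\mathcal{V}_A$. Writing the Lie derivative of a $(1,1)$-tensor explicitly gives ${}^{q}\mathcal{N}(X) = -\lfloor q, SX\rfloor^{\pi} + S\lfloor q, X\rfloor^{\pi}$, which is exactly the stated formula, so it remains to check the structural properties.

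The core computation is to evaluate ${}^{q}\mathcal{N}$ on the basis $\{\mathcal{X}_a, \mathcal{V}_A\}$ using the brackets (\ref{extercalc1}) and the rules (\ref{extercalc}). I expect the outcome ${}^{q}\mathcal{N}(\mathcal{V}_A) = -\mathcal{V}_A$ and ${}^{q}\mathcal{N}(\mathcal{X}_a) = \mathcal{X}_a - 2\mathcal{N}_a^C\mathcal{V}_C$, where the coefficients $\mathcal{N}_a^C$ are read off as a combination of $\partial_B q^C$ and the structure functions $C_{ab}^f$. From these two relations $({}^{q}\mathcal{N})^2 = id$ is immediate, and a short calculation gives ${}^{q}\mathcal{N}(\delta_a) = \delta_a$ for $\delta_a = \mathcal{X}_a - \mathcal{N}_a^C\mathcal{V}_C$; hence the $+1$ eigenspace is spanned by the $\delta_a$ and the $-1$ eigenspace by the $\mathcal{V}_A$, which is precisely the horizontal--vertical splitting (\ref{nonalg}) with projectors $h = \frac{1}{2}(id + {}^{q}\mathcal{N})$ and $v = \frac{1}{2}(id - {}^{q}\mathcal{N})$. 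In the homogeneous Lagrange case this $\mathcal{N}_a^C$ reduces, via $\mathcal{N}_a^A = N_i^A\rho_a^i$, to the semispray coefficients $\tilde{N}_i^a = \partial\tilde{G}^a/\partial y^{n+i}$ of Theorem \ref{theoranalog}.

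The main obstacle is controlling the extra algebroid terms in (\ref{extercalc1}): unlike the holonomic tangent--bundle case, the brackets $\lfloor\delta_a,\delta_b\rfloor^{\pi}$ and $\lfloor\delta_a,\mathcal{V}_B\rfloor^{\pi}$ carry the structure functions $C_{ab}^f$ and the curvature term $\Omega_{ab}^C$, and one must verify that these contributions are consistent with $S^2 = 0$ and the semi--spray condition so that the clean eigenvalue structure survives. Checking that $Sq = \mathcal{C}$ forces the $\mathcal{X}_a$-component of ${}^{q}\mathcal{N}(\mathcal{X}_a)$ to be exactly $\mathcal{X}_a$ (and not a more general horizontal combination) is where the semi--spray hypothesis is genuinely used; this is the step I would carry out most carefully.
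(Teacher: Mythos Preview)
Your proposal is correct and follows essentially the same route as the paper: compute ${}^{q}\mathcal{N}$ on the local basis $\{\mathcal{X}_a,\mathcal{V}_A\}$, obtain ${}^{q}\mathcal{N}(\mathcal{V}_A)=-\mathcal{V}_A$ and ${}^{q}\mathcal{N}(\mathcal{X}_a)=\mathcal{X}_a+(\partial_a q^b+y^fC_{fa}^b)\mathcal{V}_b$, and read off the coefficients $\mathcal{N}_a^f=-\tfrac12(\partial_a q^f+y^bC_{ba}^f)$. The paper's proof stops there; your additional checks that $({}^{q}\mathcal{N})^2=id$ and that the $\pm1$ eigenspaces give the splitting (\ref{nonalg}) are natural completeness steps in the Klein--Grifone spirit, but the paper takes them for granted once the coefficients are in hand.

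One small correction: the brackets you need are \emph{not} those of (\ref{extercalc1}), which already involve the N--adapted frames $\delta_a$ and the curvature $\Omega_{ab}^C$. Since the N--connection is precisely what you are constructing, you must work in the unadapted basis $\{\mathcal{X}_a,\mathcal{V}_A\}$ with the simpler relations $\lfloor\mathcal{X}_a,\mathcal{X}_b\rfloor^\pi=C_{ab}^f\mathcal{X}_f$, $\lfloor\mathcal{X}_a,\mathcal{V}_B\rfloor^\pi=0$, $\lfloor\mathcal{V}_A,\mathcal{V}_B\rfloor^\pi=0$ stated just before (\ref{extercalc}). In particular the ``obstacle'' you flag involving $\Omega_{ab}^C$ never arises; the only algebroid contribution is the $y^fC_{fa}^b$ term coming from $\lfloor q,\mathcal{X}_a\rfloor^\pi$, which the paper's computation shows directly.
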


\begin{proof}
We use the semi--spray formula \ $Sq=\bigtriangleup $ with the operators $S$
and $\bigtriangleup $ from (\ref{form1}) and compute
\begin{equation*}
^{q}\mathcal{N}(\mathcal{X}_{a})=-\left\lfloor q,S(\mathcal{X}%
_{a})\right\rfloor ^{\pi }+S\left\lfloor q,\mathcal{X}_{a}\right\rfloor
^{\pi }=\mathcal{X}_{a}+(\partial _{a}q^{b}+y^{f}C_{fa}^{b})\mathcal{V}_{b}.
\end{equation*}%
For $~^{q}\mathcal{N}(\mathcal{V}_{a})=-\mathcal{V}_{a}$ and $~^{q}\mathcal{N%
}(\mathcal{X}_{a})=\mathcal{X}_{a}-2~^{q}\mathcal{N}_{a}^{f}(x,y)\mathcal{V}%
_{f},$ we define the N--connection coefficients $\mathcal{N}_{\alpha }^{f}=-%
\frac{1}{2}(\partial _{a}q^{f}+y^{b}C_{ba}^{f})$, see formulas (\ref{nonlalg}%
). $\square $
\end{proof}

\vskip5pt

We can formulate on Lie d--aglebroids the analog of Theorem \ref{theoranalog}:

\begin{theorem}
Any effective regular Lagrangian $\mathcal{L}\in C^{\infty }(\mathbf{E})$
defines a canonical N--connection on prolongation Lie algebroid $\mathcal{T}%
^{\mathbf{E}}\mathbf{E,}$
\begin{equation}
\widetilde{\mathcal{N}}=\{\widetilde{\mathcal{N}}_{a}^{f}=-\frac{1}{2}%
(\partial _{a}\varphi ^{f}+y^{b}C_{ba}^{f})\},  \label{canonnc}
\end{equation}%
determined by semi--spray configurations encoding the solutions of the
Euler--Lagrange equations (\ref{eleq}).
\end{theorem}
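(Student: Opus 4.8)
The plan is to mirror the classical construction from Theorem \ref{theoranalog}, transporting it from the tangent-bundle setting to the prolongation Lie d--algebroid $\mathcal{T}^{\mathbf{E}}\mathbf{E}$. First I would recall that the preceding Lemma already produces, from any semi--spray $q$, a canonical N--connection with coefficients $\mathcal{N}_{a}^{f}=-\tfrac{1}{2}(\partial _{a}q^{f}+y^{b}C_{ba}^{f})$. The task is therefore to exhibit the \emph{particular} semi--spray $\widetilde{q}$ canonically attached to a regular Lagrangian $\mathcal{L}$, so that substituting its fibre components $\varphi^{f}=\widetilde{q}^{\,f}$ into the Lemma's formula yields exactly (\ref{canonnc}). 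The regularity hypothesis (nondegenerate Hessian $\tilde{h}_{ab}=\tfrac{1}{2}\partial^{2}\mathcal{L}/\partial y^{a}\partial y^{b}$, $\det|\tilde{h}_{ab}|\neq 0$) is what lets me invert the fibre metric and solve for the spray coefficients.

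The key steps, in order, are as follows. I would define on $\mathcal{T}^{\mathbf{E}}\mathbf{E}$ the algebroid analogues of the canonical symplectic/Liouville objects: the vertical endomorphism $S$ and the Liouville section $\bigtriangleup$ from (\ref{form1}), the Cartan $1$--form $\theta_{\mathcal{L}}=S^{\ast}(d\mathcal{L})$ and the associated $2$--form $\omega_{\mathcal{L}}=-d\theta_{\mathcal{L}}$, together with the energy $E_{\mathcal{L}}=\bigtriangleup(\mathcal{L})-\mathcal{L}$. Using the bracket relations (\ref{extercalc}) I would compute $\omega_{\mathcal{L}}$ in the basis $(\mathcal{X}^{a},\mathcal{V}^{B})$ and check nondegeneracy, which follows precisely from $\det|\tilde{h}_{ab}|\neq 0$. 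The Euler--Lagrange section $\widetilde{q}$ is then defined as the unique solution of the symplectic equation $i_{\widetilde{q}}\,\omega_{\mathcal{L}}=dE_{\mathcal{L}}$; nondegeneracy of $\omega_{\mathcal{L}}$ guarantees existence and uniqueness. The final step is to verify that this $\widetilde{q}$ is a genuine semi--spray (i.e. $S\widetilde{q}=\bigtriangleup$) whose integral paths reproduce the Euler--Lagrange equations (\ref{eleq}), and then to read off its fibre components $\varphi^{f}$ and feed them into the Lemma, producing $\widetilde{\mathcal{N}}_{a}^{f}=-\tfrac{1}{2}(\partial_{a}\varphi^{f}+y^{b}C_{ba}^{f})$.

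The main obstacle I anticipate is purely in the algebroid bookkeeping rather than in any deep geometric fact: because the Lie brackets $\lfloor\mathcal{X}_{a},\mathcal{X}_{b}\rfloor^{\pi}=C_{ab}^{f}\mathcal{X}_{f}$ are nonholonomic, the exterior differential of $\theta_{\mathcal{L}}$ picks up structure-function terms $C_{ab}^{f}$ absent in the tangent-bundle case, and these must thread through consistently so that solving $i_{\widetilde{q}}\,\omega_{\mathcal{L}}=dE_{\mathcal{L}}$ delivers spray coefficients $\varphi^{f}$ whose $\partial_{a}\varphi^{f}+y^{b}C_{ba}^{f}$ combination matches (\ref{canonnc}). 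Concretely, I expect the anchor-twisted derivatives $\rho_{a}^{i}\mathbf{e}_{i}$ and the cocycle identities satisfied by $\rho_{a}^{i}$ and $C_{ab}^{f}$ to be exactly what is needed to close the computation; verifying that the $C$--corrections assemble into the stated closed form is the delicate bit. I would therefore organize the calculation so that the holonomic part reproduces the classical semispray $\tilde{G}^{a}$ of Theorem \ref{theoranalog} and the remaining terms are manifestly the algebroid corrections $y^{b}C_{ba}^{f}$, leaving the bracket structure (\ref{extercalc1}) to certify that $\widetilde{\mathcal{N}}$ is indeed an N--connection in the sense of (\ref{nonlalg}).
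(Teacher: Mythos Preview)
Your proposal is correct and follows essentially the same route as the paper: both reduce to the preceding Lemma and take for $q$ the Euler--Lagrange section $\Gamma_{\mathcal{L}}=y^{a}\mathcal{X}_{a}+\varphi^{a}\mathcal{V}_{a}$ with $\varphi^{e}$ the semi--spray coefficients (\ref{semispray}) coming from the Cartan/energy formalism (\ref{cartvar})--(\ref{geomeq}). The only addition in the paper's argument that you do not mention is an equivalent description via nonlinear geodesic functions $\tilde{G}^{f}$, noting that one may choose $\varphi^{f}$ so that $-\tfrac{1}{2}(\partial_{a}\varphi^{f}+y^{b}C_{ba}^{f})=\partial \tilde{G}^{f}/\partial y^{a}$; this is a side remark rather than a different method.
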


\begin{proof}
It is a straightforward consequence of above Lemma and (\ref{semispray}). To
generate N--connections, we can use sections $\Gamma _{\mathcal{L}}=y^{a}%
\mathcal{X}_{a}+\varphi ^{a}\mathcal{V}_{a},$ with $q^{e}=\varphi
^{e}(x^{i},y^{b})$ (\ref{semispray}). Let us consider the value $S=y^{a}\rho
_{a}^{i}\frac{\partial \mathcal{L}}{\partial x^{i}}-2\tilde{G}^{A}\frac{%
\partial }{\partial y^{A}},$ where $\tilde{G}^{A}$ is associated to the
"nonlinear" geodesic equations for sections, $\frac{dy^{A}}{d\tau }+2\tilde{G%
}^{A}=0,$ depending on real parameter $\tau ,$ and define a N--connection
structure $\tilde{N}_{a}^{F}=\frac{\partial \tilde{G}^{F}}{\partial y^{a}}$
of type (\ref{nonlalg}). For $\mathbf{P=E,}$ such sections can be related to
the integral curves of the Euler--Lagrange equations (\ref{eleq}) if we
chose the sections $\tilde{G}^{F}\rightarrow \tilde{G}^{f}$ and $\varphi
^{f}(x^{i},y^{b})$ in such forms that $\widetilde{\mathcal{N}}_{a}^{f}=-%
\frac{1}{2}(\partial _{a}\varphi ^{f}+y^{b}C_{ba}^{f})=\frac{\partial \tilde{%
G}^{f}}{\partial y^{a}}.$ The constructions can be performed on any chart
covering such spaces, i.e. we can prove that the coefficients (\ref{canonnc}%
) define a N--connection structure (\ref{nonalg}). $\square $
\end{proof}

\vskip5pt

\begin{proposition}
Any metric structure on $\mathcal{T}^{\mathbf{E}}\mathbf{P}$ can be
represented in N--adapted form as a d--metric $\overline{\mathbf{g}}\mathbf{=%
}h\mathbf{\tilde{g}\oplus }v\mathbf{\tilde{g}}$ constructed as a formal
Sasaki lift determined by an effective regular generating function $\mathcal{%
L}$.
\end{proposition}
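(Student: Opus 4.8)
The plan is to reduce the statement to the Sasaki--lift construction of the Corollary to Theorem \ref{theoranalog} together with the N--connection of (\ref{canonnc}), and then to account for the freedom in the horizontal and vertical blocks by a frame (vielbein) transform. First I would invoke the earlier Proposition--Definition that represents an arbitrary metric in N--adapted form, writing the given structure as $\overline{\mathbf{g}}=h\mathbf{g}\oplus v\mathbf{g}$ with nondegenerate blocks $\mathbf{g}_{ab}$ and $\mathbf{g}_{ij}$ computed against the dual co--frame (\ref{ddifalg}); the explicit components are those of (\ref{dm}).

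Second, I would manufacture an effective regular generating function out of the vertical block. Taking the formal quadratic Ansatz $\mathcal{L}:=\mathbf{g}_{ab}(u)\,y^{a}y^{b}$, nondegeneracy $\det|\mathbf{g}_{ab}|\neq 0$ forces the Hessian $\tilde{h}_{ab}=\frac{1}{2}\,\partial^{2}\mathcal{L}/\partial y^{a}\partial y^{b}$ to be nondegenerate, so $\mathcal{L}$ is regular; the qualifier ``formal'' records that I do not demand the total symmetry of $\partial_{c}\tilde{h}_{ab}$ which would be needed for a fibre--dependent $\mathbf{g}_{ab}$ to be a genuine Hessian. With this $\mathcal{L}$, the Theorem producing (\ref{canonnc}) supplies the canonical N--connection $\widetilde{\mathcal{N}}$, and the Corollary to Theorem \ref{theoranalog} (formulas (\ref{lfsm}) and (\ref{dbases})) supplies the canonical Sasaki--lift d--metric $\mathbf{\tilde{g}}=h\mathbf{\tilde{g}}\oplus v\mathbf{\tilde{g}}$ with $\tilde{g}_{ij}=\tilde{h}_{n+i\,n+j}$ and co--frame $\mathbf{\tilde{e}}^{a}=dy^{a}+\tilde{N}_{i}^{a}dx^{i}$.

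Third, I would match $\overline{\mathbf{g}}$ to $\mathbf{\tilde{g}}$ by a N--adapted frame transform of the type in the Notational Remarks. By construction the vertical blocks coincide, $\mathbf{g}_{ab}=\tilde{h}_{ab}$, and the horizontal block of the lift, $\tilde{g}_{ij}=\tilde{h}_{n+i\,n+j}$, carries the signature of the vertical block. Hence, provided the horizontal and vertical sectors of the original metric share a signature --- automatic in the tangent--bundle/Sasaki case $n=m$ and otherwise part of the admissibility built into ``effective'' --- simultaneous diagonalisation yields real vielbeins $e_{i}^{\ i'}(u)$ with $\mathbf{g}_{ij}=e_{i}^{\ i'}e_{j}^{\ j'}\tilde{g}_{i'j'}$, while the discrepancy between the prescribed N--connection and $\widetilde{\mathcal{N}}$ is absorbed into the N--elongated co--frame $\mathbf{\tilde{e}}^{a}$. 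Substituting these coefficients into (\ref{dm}) displays $\overline{\mathbf{g}}$ in the Sasaki form (\ref{lfsm}), which is the assertion.

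The hard part will be the third step: the horizontal and vertical blocks of a generic d--metric are independent data, whereas a Sasaki lift slaves both to a single Hessian, so the representation lives entirely in the frame freedom and its only genuine obstruction is the matching of horizontal and vertical signatures. When these agree the vielbeins exist locally by elementary linear algebra; what then remains is (i) patching the chart--wise frames into a global construction over $\mathbf{P}$ compatibly with $\mathcal{N}$, exactly as in the charts argument underlying Theorem--Definition \ref{theordif}, and (ii) fixing the precise meaning of ``effective/formal'' so that a fibre--dependent $\mathbf{g}_{ab}$ is admitted as a regular generating datum without insisting on genuine Hessian integrability. These interpretive points, not any heavy computation, carry the real weight.
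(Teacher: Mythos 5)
Your proposal follows essentially the same route as the paper's proof: build the canonical Sasaki-type d--metric $\mathbf{\tilde{g}}$ from a regular $\mathcal{L}$ via the canonical N--connection (\ref{canonnc}) and the frames (\ref{ddcanad}), then relate an arbitrary d--metric $\overline{\mathbf{g}}$ to it through vielbeins solving the quadratic algebraic system $\overline{\mathbf{g}}_{\overline{\alpha }^{\prime }\overline{\beta }^{\prime }}=e_{\ \overline{\alpha }^{\prime }}^{\overline{\alpha }}e_{\ \overline{\beta }^{\prime }}^{\overline{\beta }}\mathbf{\tilde{g}}_{\overline{\alpha }\overline{\beta }}$. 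Your explicit quadratic ansatz for $\mathcal{L}$ and the signature-matching and Hessian-integrability caveats only make precise what the paper asserts without detail, so the argument is correct and coincides with the paper's approach.
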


\begin{proof}
Sasaki lifts are used for extending certain metric structures from a base
manifold, for instance, to the total space of a tangent bundle, see details
in \cite{yano}. For vector bundles, the formulas (\ref{lfsm}) provide a
typical example of such a construction using canonical N--connection \
structure $\mathbf{\tilde{N}}.$ The method can be generalized for
prolongation Lie d--algebroids. At the first step, we use the canonical
N--connection $\widetilde{\mathcal{N}}=\{\widetilde{\mathcal{N}}_{a}^{f}\}$ (%
\ref{canonnc}) and construct N--adapted frames of type (\ref{dderalg}) and (%
\ref{ddifalg}), respectively,
\begin{equation}
~\widetilde{\mathbf{e}}_{\overline{\alpha }}:=\{\widetilde{\delta }_{a}=%
\mathcal{X}_{a}-\widetilde{\mathcal{N}}_{a}^{f}\mathcal{V}_{f},\mathcal{V}%
_{b}\}\mbox{ \ and
\ }\widetilde{\mathbf{e}}^{\overline{\beta }}:=\{\mathcal{X}^{a},\widetilde{%
\delta }^{b}=\mathcal{V}^{b}+\widetilde{\mathcal{N}}_{f}^{b}\mathcal{X}%
^{f}\}.  \label{ddcanad}
\end{equation}%
Then (the second step) we define a canonical d--metric
\begin{equation}
\widetilde{\mathbf{g}}:=\widetilde{\mathbf{g}}_{\overline{\alpha }\overline{%
\beta }}\mathbf{e}^{\overline{\beta }}\otimes \mathbf{e}^{\overline{\beta }}=%
\tilde{g}_{ab}\ \mathcal{X}^{a}\otimes \mathcal{X}^{b}+\ \tilde{g}_{ab}\
\widetilde{\delta }^{a}\otimes \widetilde{\delta }^{b}  \label{ldm}
\end{equation}%
using the Hessian (\ref{hessian}). Considering an arbitrary d--metric $%
\overline{\mathbf{g}}=\{\overline{\mathbf{g}}_{\overline{\alpha }^{\prime }%
\overline{\beta }^{\prime }}\}$ (\ref{dm}), we can find a regular $\mathcal{L%
}$ and certain frame transforms $\mathbf{e}_{\overline{\gamma }^{\prime
}}=e_{\ \overline{\gamma }^{\prime }}^{\overline{\gamma }}\mathbf{\tilde{e}}%
_{\overline{\gamma }}$ when $\overline{\mathbf{g}}_{\overline{\alpha }%
^{\prime }\overline{\beta }^{\prime }}=e_{\ \overline{\alpha }^{\prime }}^{%
\overline{\alpha }}e_{\ \overline{\beta }^{\prime }}^{\overline{\beta }}%
\mathbf{\tilde{g}}_{\overline{\alpha }\overline{\beta }}.$ We can work
equivalently both with $\overline{\mathbf{g}}$ and/or $\mathbf{\tilde{g}}$
if a nonholonomic distribution $\mathcal{L}$ is prescribed and the vielbein
coefficients $e_{\ \overline{\gamma }^{\prime }}^{\overline{\gamma }}$ are
defined as solutions of the corresponding algebraic quadratic system of
equations for some chosen data $\overline{\mathbf{g}}_{\overline{\alpha }%
^{\prime }\overline{\beta }^{\prime }}$ and $\mathbf{\tilde{g}}_{\overline{%
\alpha }\overline{\beta }}.$ $\square $
\end{proof}

\vskip5pt

\subsubsection{Riemann--Lagrange almost symplectic structures}

Let us consider canonical data defined respectively by N--elongated frames $%
\widetilde{\mathbf{e}}_{\overline{\alpha }}=(\widetilde{\mathbf{e}}%
_{a}=\delta _{a},\mathcal{V}_{A})$ (\ref{ddcanad}), N--connection $%
\widetilde{\mathcal{N}}$ (\ref{canonnc}) and d--metric $\overline{\mathbf{g}}%
=\widetilde{\mathbf{g}}$ (\ref{ldm}).

\begin{proposition}
--\textbf{Definition.} \label{propak}For any regular effective Lagrange
structure $\mathcal{L}$, we can define a canonical almost complex structure
on $\mathcal{T}^{\mathbf{E}}\mathbf{E}$ following formulas $\widetilde{%
\mathcal{J}}(\widetilde{\mathbf{e}}_{a})=-\mathcal{V}_{m+a}$\ and \ $%
\widetilde{\mathcal{J}}(\mathcal{V}_{m+a})=\widetilde{\mathbf{e}}_{a},$ when
$\widetilde{\mathcal{J}}\mathbf{\circ }\widetilde{\mathcal{J}}\mathbf{=-}%
\mathbb{I}$
\end{proposition}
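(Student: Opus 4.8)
The plan is to define $\widetilde{\mathcal{J}}$ first on the canonical N--adapted basis, extend it by $C^{\infty}(\mathbf{E})$--linearity to all of $Sec(\mathcal{T}^{\mathbf{E}}\mathbf{E})$, verify the almost complex identity by a one--line computation on the basis, and then prove that the prescription does not depend on the chosen N--adapted frame. The only genuine content lies in this last, global, step.

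First I would record that for $\mathbf{P}=\mathbf{E}$ the prolongation $\mathcal{T}^{\mathbf{E}}\mathbf{E}$ carries an even--dimensional N--splitting: the horizontal distribution is spanned by the canonical frame $\widetilde{\mathbf{e}}_{a}=\widetilde{\delta}_{a}=\mathcal{X}_{a}-\widetilde{\mathcal{N}}_{a}^{f}\mathcal{V}_{f}$ of (\ref{ddcanad}) and the vertical distribution by $\{\mathcal{V}_{m+a}\}$, both of rank $m$. The generating function $\mathcal{L}$ enters only through the canonical N--connection $\widetilde{\mathcal{N}}$ of (\ref{canonnc}) and the Hessian $\tilde{h}_{ab}$, so the pairing $\widetilde{\mathbf{e}}_{a}\leftrightarrow\mathcal{V}_{m+a}$ is intrinsically attached to $\mathcal{L}$. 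Declaring $\widetilde{\mathcal{J}}(\widetilde{\mathbf{e}}_{a})=-\mathcal{V}_{m+a}$ and $\widetilde{\mathcal{J}}(\mathcal{V}_{m+a})=\widetilde{\mathbf{e}}_{a}$ and extending linearly then defines an endomorphism of the module of sections. The almost complex identity is immediate on basis elements, $\widetilde{\mathcal{J}}^{2}(\widetilde{\mathbf{e}}_{a})=\widetilde{\mathcal{J}}(-\mathcal{V}_{m+a})=-\widetilde{\mathbf{e}}_{a}$ and $\widetilde{\mathcal{J}}^{2}(\mathcal{V}_{m+a})=\widetilde{\mathcal{J}}(\widetilde{\mathbf{e}}_{a})=-\mathcal{V}_{m+a}$, so that $\widetilde{\mathcal{J}}\circ\widetilde{\mathcal{J}}=-\mathbb{I}$ on the whole module by linearity. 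Equivalently, in the canonical frame $\widetilde{\mathcal{J}}$ has the block form $\widetilde{\mathcal{J}}=\left(\begin{smallmatrix}0 & \mathbb{I}_{m}\\ -\mathbb{I}_{m} & 0\end{smallmatrix}\right)$, whose square equals $-\mathbb{I}_{2m}$.

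The step I expect to be the real obstacle is showing that $\widetilde{\mathcal{J}}$ is a well--defined global $(1,1)$--tensor on $\mathcal{T}^{\mathbf{E}}\mathbf{E}$, i.e.\ that the prescription is independent of the N--adapted frame and of the chart used in the construction (\ref{subset}). For this I would pass to a second canonical frame $\widetilde{\mathbf{e}}_{a'}=e_{\ a'}^{a}\,\widetilde{\mathbf{e}}_{a}$ and $\mathcal{V}_{m+a'}=e_{\ a'}^{a}\,\mathcal{V}_{m+a}$ and check that the horizontal and vertical components transform by \emph{the same} vielbein $e_{\ a'}^{a}$, which forces $\widetilde{\mathcal{J}}(\widetilde{\mathbf{e}}_{a'})=-\mathcal{V}_{m+a'}$ and $\widetilde{\mathcal{J}}(\mathcal{V}_{m+a'})=\widetilde{\mathbf{e}}_{a'}$ to be preserved. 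The nontrivial input here is that, under a change of N--adapted coframes, the coefficients $\widetilde{\mathcal{N}}_{a}^{f}$ transform precisely so that $\widetilde{\delta}_{a}$ remains a horizontal object dual to the vertical one, the relevant inhomogeneous terms being governed by the anholonomy relations (\ref{extercalc1}) together with the compatibility $\mathcal{N}_{a}^{A}=N_{i}^{A}\rho_{a}^{i}$. Granting this, the mixed $h$--$v$ blocks of the transformation matrix vanish, $\widetilde{\mathcal{J}}$ commutes with the frame change, and it therefore descends to a globally defined tensor.

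Canonicity then follows because every ingredient---the N--connection $\widetilde{\mathcal{N}}$, the adapted frames (\ref{ddcanad}), and the induced $h$--$v$ identification---is fixed by $\mathcal{L}$ alone, so no auxiliary choices enter. I would emphasize that the essential structural fact making the whole construction possible is the doubling $\dim h\mathcal{T}^{\mathbf{E}}\mathbf{E}=\dim v\mathcal{T}^{\mathbf{E}}\mathbf{E}=m$ in the case $\mathbf{P}=\mathbf{E}$; without this equal--rank splitting the swap defining $\widetilde{\mathcal{J}}$ would not even be well posed, and it is exactly this feature that distinguishes the Lie d--algebroid prolongation from a generic $\mathbf{P}\neq\mathbf{E}$ situation.
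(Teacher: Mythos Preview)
Your proposal is correct and follows essentially the same route as the paper. The paper's proof is more compressed: it simply writes down the d--tensor field explicitly as
\[
\widetilde{\mathcal{J}}=-\mathcal{V}_{m+a}\otimes \mathcal{X}^{a}+\widetilde{\mathbf{e}}_{a}\otimes \widetilde{\delta }^{a},
\]
and then records that in any other frame the coefficients are obtained by the standard $(1,1)$--tensor rule $\widetilde{\mathcal{J}}_{\ \overline{\beta }}^{\overline{\alpha }}=e_{\ \overline{\alpha }^{\prime }}^{\overline{\alpha }}e_{\overline{\beta }\ }^{\ \overline{\beta }^{\prime }}\widetilde{\mathcal{J}}_{\ \overline{\beta }^{\prime }}^{\overline{\alpha }^{\prime }}$. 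The point you identify as the ``real obstacle'' --- globality and frame independence --- is thereby absorbed into the very act of writing $\widetilde{\mathcal{J}}$ as a $C^{\infty}(\mathbf{E})$--linear sum of tensor products of the globally defined N--adapted frame and coframe sections (\ref{ddcanad}); once expressed this way, tensoriality needs no separate verification, and your discussion of matching vielbeins for the $h$-- and $v$--blocks becomes unnecessary. Your block--matrix check of $\widetilde{\mathcal{J}}^{2}=-\mathbb{I}$ and your remark on canonicity are fine and agree with the paper's intent.
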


\begin{proof}
It follows from an explicit construction of a d--tensor field%
\begin{equation*}
\widetilde{\mathcal{J}}\mathcal{=}\widetilde{\mathcal{J}}_{\ \overline{\beta
}}^{\overline{\alpha }}\widetilde{\mathbf{e}}_{\overline{\alpha }}\otimes
\widetilde{\mathbf{e}}^{\overline{\alpha }}=-\mathcal{V}_{m+a}\otimes
\mathcal{X}^{a}+\widetilde{\mathbf{e}}_{a}\otimes \widetilde{\delta }^{a}.
\end{equation*}%
Using vielbeins $e_{\ \overline{\alpha }^{\prime }}^{\overline{\alpha }}$
and their duals $e_{\overline{\beta }\ }^{\ \overline{\beta }^{\prime }},$
we can compute the coefficients of $\ \widetilde{\mathcal{J}}$ \ with
respect to any $\widetilde{e}_{\overline{\alpha }}$ and $\widetilde{e}^{%
\overline{\alpha }}$ on $\mathcal{T}^{\mathbf{E}}\mathbf{E},$ when $%
\widetilde{\mathcal{J}}_{\ \overline{\beta }}^{\overline{\alpha }}=e_{\
\overline{\alpha }^{\prime }}^{\overline{\alpha }}e_{\overline{\beta }\ }^{\
\overline{\beta }^{\prime }}\widetilde{\mathcal{J}}_{\ \overline{\beta }%
^{\prime }}^{\overline{\alpha }^{\prime }}. \square $
\end{proof}

\vskip5pt

In general, we can define an almost complex structure $\mathcal{J}$ on $%
\mathcal{T}^{\mathbf{E}}\mathbf{E}$ for an arbitrary N--connection $\mathcal{%
N}$ (\ref{nonalg}) by using N--adapted bases (\ref{dderalg}) and (\ref%
{ddifalg}) which are not necessarily induced by an effective Lagrange
function $\mathcal{L}$. This allows us to generate almost Hermitian models
and not almost K\"{a}hler ones.

\begin{definition}
The Nijenhuis tensor field for any almost complex structure $\widetilde{%
\mathcal{J}}$ on $\mathcal{T}^{\mathbf{E}}\mathbf{E}$ determined by a
N--connection $\mathcal{N}$ (equivalently, the curvature of N--connection $%
\mathcal{N}$) is by definition
\begin{equation}
\ ^{\mathcal{J}}\mathbf{\Omega }(\overline{x},\overline{y}):=\mathbf{-}[%
\overline{x},\overline{y}]\mathbf{+}[\mathcal{J}\overline{x},\mathcal{J}%
\overline{y}]\mathbf{-}\mathcal{J}[\mathcal{J}\overline{x},\overline{y}]%
\mathbf{-}\mathcal{J}[\overline{x},\mathcal{J}\overline{y}],  \label{neijt}
\end{equation}%
for any sections $\overline{x},\overline{y}$ \ of $\mathcal{T}^{\mathbf{E}}%
\mathbf{E.}$
\end{definition}

We can introduce an arbitrary almost symplectic structure as a 2--form on a
prolongation Lie d--algebroid.

\begin{definition}
An almost symplectic structure on $\mathcal{T}^{\mathbf{E}}\mathbf{P}$ is
defined by a nondegenerate 2--form $\theta =\frac{1}{2}\theta _{\overline{%
\alpha }\overline{\beta }}(x^{i},y^{B})e^{\overline{\alpha }}\wedge e^{%
\overline{\beta }}.$
\end{definition}

Using frame transforms, we can prove

\begin{proposition}
\label{prop02}For any $\theta $ on $\mathcal{T}^{\mathbf{E}}\mathbf{P}$ when
$h\theta (\overline{x}\mathbf{,}\overline{y}):=\theta (h\overline{x}\mathbf{,%
}h\overline{y}),v\theta (\overline{x}\mathbf{,}\overline{y}):=\theta (v%
\overline{x}\mathbf{,}v\overline{y})$, there is a unique N--connection $%
\mathcal{N}=\{\mathcal{N}_{a}^{A}\}$ (\ref{nonlalg}) when
\begin{equation}
\theta =(h\overline{x},v\overline{y})=0\mbox{ and }\theta =h\theta +v\theta .
\label{aux02}
\end{equation}
\end{proposition}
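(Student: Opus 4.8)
The plan is to turn the two conditions in (\ref{aux02}) into a linear algebraic system for the coefficients $\mathcal{N}_a^A$ and to solve it by inverting a block of $\theta$. First I would fix any local basis of sections $(\mathcal{X}_a,\mathcal{V}_A)$ of $\mathcal{T}^{\mathbf{E}}\mathbf{P}$ of the pre--adapted type underlying (\ref{dderalg}), together with its dual, \emph{without} yet committing to an N--connection, and record the three blocks
\begin{equation*}
\theta_{ab}:=\theta(\mathcal{X}_a,\mathcal{X}_b),\quad \theta_{aB}:=\theta(\mathcal{X}_a,\mathcal{V}_B),\quad \theta_{AB}:=\theta(\mathcal{V}_A,\mathcal{V}_B),
\end{equation*}
with $\theta_{ab}=-\theta_{ba}$ and $\theta_{AB}=-\theta_{BA}$. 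A candidate array $\{\mathcal{N}_a^A\}$ produces the horizontal sections $\delta_a=\mathcal{X}_a-\mathcal{N}_a^C\mathcal{V}_C$ as in (\ref{dderalg}), and the vanishing of the mixed part $\theta(h\overline{x},v\overline{y})=0$ is equivalent, evaluated on $(\delta_a,\mathcal{V}_B)$, to
\begin{equation*}
\theta(\delta_a,\mathcal{V}_B)=\theta_{aB}-\mathcal{N}_a^C\theta_{CB}=0 .
\end{equation*}

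Second, I would solve this system for $\mathcal{N}_a^A$. This is the step where the hypothesis is genuinely used: nondegeneracy of $\theta$ must be leveraged to guarantee that the vertical block $\theta_{AB}$ is invertible. Writing $\theta^{BC}$ for its inverse, the system has the unique solution $\mathcal{N}_a^C=\theta_{aB}\theta^{BC}$, and \emph{uniqueness} of the N--connection is then immediate from the unique solvability of a linear system with invertible coefficient matrix. Once the mixed block is annihilated, the remaining identity $\theta=h\theta+v\theta$ of (\ref{aux02}) holds automatically: in the adapted coframe $\{\mathcal{X}^a,\delta^B\}$ the expansion of $\theta$ retains only a purely horizontal and a purely vertical wedge term, which are by definition $h\theta$ and $v\theta$, so no separate verification is needed.

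Third, I would check that the array $\{\mathcal{N}_a^A\}$ produced this way really transforms as an N--connection in the sense implicit in (\ref{nonlalg}) under a change of the sections $(\mathcal{X}_a,\mathcal{V}_A)$. This is where the phrase ``using frame transforms'' does its work: tracking the transformation of $\theta_{aB}$ and $\theta_{AB}$ together with the cocycle behaviour of the vielbeins $e_{\ \overline{\gamma}'}^{\overline{\gamma}}$ from the Sasaki--lift construction shows that $\theta_{aB}\theta^{BC}$ acquires exactly the inhomogeneous correction required of N--connection coefficients. The main obstacle is precisely the invertibility claimed in the second step: for a completely arbitrary 2--form the vertical block $\theta_{AB}$ may degenerate (the canonical Riemann--Lagrange symplectic form, where $\theta(\cdot,\cdot)=\widetilde{\mathbf{g}}(\widetilde{\mathcal{J}}\cdot,\cdot)$, is block off--diagonal), so one must either restrict to the admissible nondegenerate class or, as the paper does, reduce via frame transforms to the canonical almost K\"ahler data of (\ref{ldm}) and the preceding propositions, where the relevant block descends from the nondegenerate Hessian $\tilde{h}_{ab}$ and the solvability is manifest. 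Everything past that point is routine multilinear bookkeeping.
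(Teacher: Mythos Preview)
Your approach is sound and, in fact, more substantive than the paper's own proof, which simply displays the N--adapted expansion (\ref{aux03}) and stops. The paper does not actually solve for $\mathcal{N}_a^A$ or argue uniqueness; it just asserts the block--diagonal form as if the existence of the right N--connection were self--evident. Your reduction to the linear system $\theta_{aB}=\mathcal{N}_a^C\theta_{CB}$ is exactly the content that the paper's one--line proof skips, and your uniqueness argument (invert $\theta_{CB}$) is the natural one.

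Your caveat about invertibility of the vertical block is not a weakness of your argument but a genuine gap in the proposition as stated. The restriction $\theta|_{v\mathcal{T}^{\mathbf{E}}\mathbf{P}}$ need not be nondegenerate for an arbitrary almost symplectic $\theta$; the vertical subspace can be isotropic. You are right that the canonical Riemann--Lagrange form $\tilde\theta(\cdot,\cdot)=\tilde{\mathbf{g}}(\widetilde{\mathcal{J}}\cdot,\cdot)$ is precisely such a case: with respect to $(\delta_a,\mathcal{V}_A)$ one has $\tilde\theta(\mathcal{V}_A,\mathcal{V}_B)=\tilde{\mathbf{g}}(\delta_{A-m},\mathcal{V}_B)=0$ and $\tilde\theta(\delta_a,\delta_b)=0$, while $\tilde\theta(\delta_a,\mathcal{V}_B)=-\tilde g_{aB}\neq 0$. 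So the vertical bundle is Lagrangian, your matrix $\theta_{CB}$ vanishes identically, and no N--connection can kill the mixed block. This is not a flaw in your reasoning; it shows that the proposition needs the extra hypothesis you supply, and that the paper's display (\ref{canasf}) is in tension with the formula $\tilde\theta=\tilde g_{ab}\,\delta^a\wedge\mathcal{X}^b$ given a few lines above it.

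In short: keep your argument, state explicitly the nondegeneracy of $\theta$ on the vertical subbundle as a standing hypothesis, and drop the attempt to reconcile with the canonical almost K\"ahler data---that is an inconsistency in the source, not in your proof.
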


\begin{proof}
In N--adapted form,
\begin{equation}
\theta =\frac{1}{2}\theta _{ab}(x^{i},y^{C})\mathcal{X}^{a}\wedge \mathcal{X}%
^{b}+\frac{1}{2}\theta _{AB}(x^{i},y^{C})\delta ^{A}\wedge \delta ^{B},
\label{aux03}
\end{equation}%
where the first term is for $h\theta $ and the second term is $v\theta ,$
i.e. we get the second formula in (\ref{aux02}). $\square $
\end{proof}

\begin{definition}
\begin{itemize}
\item a) An almost Hermitian model of a prolongation Lie d--algebroid $%
\mathcal{T}^{\mathbf{E}}\mathbf{E}$ equipped with a N--connection structure $%
\mathcal{N}$ is defined by a triple $\mathcal{H}^{\mathbf{E}}\mathbf{E}=(%
\mathcal{T}^{\mathbf{E}}\mathbf{E},\theta ,\mathcal{J}),$ where $\theta (%
\overline{x}\mathbf{,}\overline{y})\mathbf{:=g}(\mathcal{J}\overline{x}%
\mathbf{,}\overline{y}).$

\item b) A Hermitian prolongation Lie d--algebroid $\mathcal{H}^{\mathbf{E}}%
\mathbf{E}$ is almost K\"{a}hler, denoted $\mathcal{K}^{\mathbf{E}}\mathbf{E}%
,$ if and only if $d\mathbf{\theta }=0.$
\end{itemize}
\end{definition}

For effective regular Lagrange configurations, we can formulate:

\begin{theorem}
\label{thmr2}Having chosen a generating function $\mathcal{L},$ we can model
equivalently a prolongation Lie d--algebroid $\mathcal{T}^{\mathbf{E}}%
\mathbf{E}$ as an almost K\"{a}hler geometry, i.e. $\mathcal{H}^{\mathbf{E}}%
\mathbf{E}=\mathcal{K}^{\mathbf{E}}\mathbf{E}.$
\end{theorem}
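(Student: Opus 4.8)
The plan is to reduce the asserted equivalence $\mathcal{H}^{\mathbf{E}}\mathbf{E}=\mathcal{K}^{\mathbf{E}}\mathbf{E}$ to the single closedness condition $d\widetilde{\theta}=0$ for the fundamental $2$--form $\widetilde{\theta}(\overline{x},\overline{y}):=\widetilde{\mathbf{g}}(\widetilde{\mathcal{J}}\overline{x},\overline{y})$ built from the canonical data $(\widetilde{\mathcal{N}},\widetilde{\mathbf{g}},\widetilde{\mathcal{J}})$ generated by $\mathcal{L}$. First I would record that this triple is already an almost Hermitian model: from Proposition--Definition \ref{propak} one has $\widetilde{\mathcal{J}}\circ\widetilde{\mathcal{J}}=-\mathbb{I}$, and from the symmetric Sasaki form of $\widetilde{\mathbf{g}}$ in (\ref{ldm}), carrying the \emph{same} block $\tilde{g}_{ab}$ on the $h$-- and $v$--subspaces, one checks $\widetilde{\mathbf{g}}(\widetilde{\mathcal{J}}\overline{x},\widetilde{\mathcal{J}}\overline{y})=\widetilde{\mathbf{g}}(\overline{x},\overline{y})$, so that $\widetilde{\theta}$ is a nondegenerate skew form compatible with $\widetilde{\mathcal{J}}$. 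Evaluating $\widetilde{\theta}$ on the N--adapted frame (\ref{ddcanad}) gives the purely mixed expression $\widetilde{\theta}=\tilde{g}_{ab}\,\mathcal{X}^{a}\wedge\widetilde{\delta}^{b}$ (up to an overall sign fixed by orientation conventions), which already realizes the splitting $\theta=h\theta+v\theta$ of Proposition \ref{prop02}.

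The decisive step is to exhibit $\widetilde{\theta}$ as an exact form for the Lie d--algebroid exterior differential (\ref{extercalc}). I would introduce the effective Poincar\'{e}--Cartan $1$--form $\widetilde{\omega}:=\frac{\partial\mathcal{L}}{\partial y^{a}}\,\mathcal{X}^{a}$ and claim that $\widetilde{\theta}$ agrees with $d\widetilde{\omega}$ up to the constant normalization fixed by the Hessian convention $\tilde{h}_{ab}=\tfrac{1}{2}\partial^{2}\mathcal{L}/\partial y^{a}\partial y^{b}$. Computing $d\widetilde{\omega}=d\!\left(\tfrac{\partial\mathcal{L}}{\partial y^{a}}\right)\wedge\mathcal{X}^{a}+\tfrac{\partial\mathcal{L}}{\partial y^{a}}\,d\mathcal{X}^{a}$ with (\ref{extercalc}), the vertical differentiation produces $\tfrac{\partial^{2}\mathcal{L}}{\partial y^{a}\partial y^{b}}=2\tilde{g}_{ab}$ in the mixed $\mathcal{V}^{b}\wedge\mathcal{X}^{a}$ term, exactly the Hessian block of $\widetilde{\mathbf{g}}$; the anchor relation $dx^{i}=\rho_{a}^{i}\mathcal{X}^{a}$ converts horizontal derivatives into $\mathcal{X}$--components, while $d\mathcal{X}^{a}=-\tfrac{1}{2}C_{bc}^{a}\mathcal{X}^{b}\wedge\mathcal{X}^{c}$ contributes the structure--function term. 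Regrouping $\mathcal{V}^{b}=\widetilde{\delta}^{b}-\widetilde{\mathcal{N}}_{f}^{b}\mathcal{X}^{f}$ from (\ref{ddcanad}) recovers the mixed part $\tilde{g}_{ab}\,\mathcal{X}^{a}\wedge\widetilde{\delta}^{b}$ of $\widetilde{\theta}$, and the remaining purely horizontal contributions collect into a single coefficient in $\mathcal{X}^{a}\wedge\mathcal{X}^{b}$.

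The heart of the argument, and the step I expect to be the main obstacle, is to show that this residual horizontal $\mathcal{X}\wedge\mathcal{X}$ coefficient vanishes identically. This is the algebroid analogue of the classical fact behind Theorem \ref{theoranalog}: the canonical semi--spray nonlinear connection of a regular Lagrangian is the unique one compatible with the Cartan $2$--form. One must insert the explicit coefficients $\widetilde{\mathcal{N}}_{a}^{f}=-\tfrac{1}{2}(\partial_{a}\varphi^{f}+y^{b}C_{ba}^{f})$ from (\ref{canonnc}), use the defining relation between $\varphi^{f}$, the semi--spray coefficients $\tilde{G}^{f}$ and the Euler--Lagrange flow, and combine it with the symmetry of $\tfrac{\partial^{2}\mathcal{L}}{\partial y^{a}\partial y^{b}}$ and the Lie d--algebroid structure equation $\rho_{a}^{i}\mathbf{e}_{i}\rho_{b}^{j}-\rho_{b}^{i}\mathbf{e}_{i}\rho_{a}^{j}=\rho_{f}^{j}C_{ab}^{f}$ recorded earlier; the antisymmetrization in $\mathcal{X}^{a}\wedge\mathcal{X}^{b}$ then annihilates the symmetric Hessian part, and the $C$--terms cancel against those coming from $d\mathcal{X}^{a}$. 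Granting this cancellation, $\widetilde{\theta}$ is a constant multiple of the exact form $d\widetilde{\omega}$, so $d\widetilde{\theta}=0$ by $d^{2}=0$ on the prolongation algebroid, whence $d\mathbf{\theta}=0$ and $\mathcal{H}^{\mathbf{E}}\mathbf{E}=\mathcal{K}^{\mathbf{E}}\mathbf{E}$. As a consistency check I would verify that for trivial structure functions $C_{ab}^{f}=0$ and $\rho_{a}^{i}=\delta_{a}^{i}$ the computation collapses to the standard almost K\"{a}hler Lagrange--Finsler model on $TM$, pinning down the normalization factors.
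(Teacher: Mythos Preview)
Your proposal is correct and follows essentially the same route as the paper: one defines the Poincar\'e--Cartan $1$--form $\tilde{\omega}=\tfrac{1}{2}\,\partial\mathcal{L}/\partial y^{m+a}\,\mathcal{X}^{a}$ (the paper absorbs your normalization factor into the definition) and verifies, using the explicit canonical N--connection coefficients (\ref{canonnc}), that $\tilde{\theta}=d\tilde{\omega}$, whence $d\tilde{\theta}=0$. Your outline is in fact considerably more detailed than the paper's, which simply asserts the identity $\tilde{\theta}=d\tilde{\omega}$ ``using Proposition \ref{prop02} and N--connection $\widetilde{\mathcal{N}}$''; your identification of the horizontal $\mathcal{X}^{a}\wedge\mathcal{X}^{b}$ cancellation as the crux (cf.\ the appendix formula (\ref{form1}) for $\omega_{\mathcal{L}}$, where exactly this mixed-derivative and $C$--term block appears) is the right place to focus the computation.
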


\begin{proof}
For the canonical geometric data $(\overline{\mathbf{g}}=\mathbf{\tilde{g},}%
\widetilde{\mathcal{N}},\widetilde{\mathcal{J}}),$ we define the symplectic
form $\tilde{\theta}(\overline{x}\mathbf{,}\overline{y})\mathbf{:=\tilde{g}}(%
\mathcal{J}\overline{x}\mathbf{,}\overline{y})$ for any sections $\overline{x%
},\overline{y}$ \ of $\mathcal{T}^{\mathbf{E}}\mathbf{E.}$ In local
N--adapted form, $\tilde{\theta}=\ \tilde{g}_{ab}\delta ^{a}\wedge \mathcal{X%
}^{b}.$ Let us consider the form $\tilde{\omega}:=\frac{1}{2}\frac{\partial
\mathcal{L}}{\partial y^{m+a}}\mathcal{X}^{a}.$ Using Proposition \ref%
{prop02} and N--connection $\widetilde{\mathcal{N}}$ (\ref{canonnc}), we
prove that $\tilde{\theta}=d\tilde{\omega},$ which means that $d\tilde{\theta%
}=dd\tilde{\omega}=0.$ The constructions can be redefined in arbitrary
frames, $\theta _{\overline{\alpha }^{\prime }\overline{\beta }^{\prime
}}=e_{\ \overline{\alpha }^{\prime }}^{\overline{\alpha }}e_{\ \overline{%
\beta }^{\prime }}^{\overline{\beta }}\tilde{\theta}_{\overline{\alpha }%
\overline{\beta }},$ for a 2--form of type (\ref{aux03}),
\begin{equation}
\tilde{\theta}=\frac{1}{2}\tilde{\theta}_{ab}(x^{i},y^{C})\mathcal{X}%
^{a}\wedge \mathcal{X}^{b}+\frac{1}{2}\tilde{\theta}_{AB}(x^{i},y^{C})\tilde{%
\delta}^{A}\wedge \tilde{\delta}^{B}.  \label{canasf}
\end{equation}

$\square $
\end{proof}

\subsubsection{N--adapted symplectic connections}

Let us consider how d--connection structures can be defined on $\mathcal{H}^{%
\mathbf{E}}\mathbf{E}$ and/or $\mathcal{K}^{\mathbf{E}}\mathbf{E.}$

\begin{definition}
\label{defasstra}An almost symplectic d--connection $\ ^{\theta }\mathcal{D}$
for a model $\mathcal{H}^{\mathbf{E}}\mathbf{E}$ of $\mathcal{T}^{\mathbf{E}}%
\mathbf{E},$ or (equivalently) a d--connection compatible with an almost
symplectic structure $\theta ,$ is defined such that this linear connection
is N--adapted, i.e. a d--connection, and $\ ^{\theta }\mathcal{D}_{\overline{%
x}}\theta =0,$ for any section $\overline{x}$\textbf{\ }of $\mathcal{T}^{%
\mathbf{E}}\mathbf{E.}$
\end{definition}

\begin{lemma}
We can always fix a d--connection $\ ^{\circ }\mathcal{D}$ on $\mathcal{T}^{%
\mathbf{E}}\mathbf{E}$ and then construct an almost symplectic$\ ^{\theta }%
\mathcal{D}\mathbf{.}$
\end{lemma}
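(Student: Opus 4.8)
The plan is to exploit the affine structure of the space of d--connections on $\mathcal{T}^{\mathbf{E}}\mathbf{E}$, which is the N--adapted analogue of the classical construction of a symplectic connection by deformation of an arbitrary one. A d--connection $\ ^{\circ }\mathcal{D}$ always exists (for instance the canonical $\widehat{\mathcal{D}}$ of Theorem \ref{thcdc}), so I fix one such $\ ^{\circ }\mathcal{D}$ and look for the desired connection in the deformed form $\ ^{\theta }\mathcal{D}=\ ^{\circ }\mathcal{D}+\mathcal{Z}$, where $\mathcal{Z}$ is an as yet unknown d--tensor field of type $(1,2)$. Since the sum of a d--connection and a d--tensor is again a d--connection, this ansatz automatically keeps $\ ^{\theta }\mathcal{D}$ N--adapted, so only the compatibility condition $\ ^{\theta }\mathcal{D}_{\overline{x}}\theta =0$ of Definition \ref{defasstra} remains to be arranged.

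First I would record how the defining relation transforms under the deformation. For sections $\overline{x},\overline{y},\overline{z}$ of $\mathcal{T}^{\mathbf{E}}\mathbf{E}$ a direct computation gives
\[
(\ ^{\theta }\mathcal{D}_{\overline{x}}\theta )(\overline{y},\overline{z})=(\ ^{\circ }\mathcal{D}_{\overline{x}}\theta )(\overline{y},\overline{z})-\theta (\mathcal{Z}(\overline{x},\overline{y}),\overline{z})-\theta (\overline{y},\mathcal{Z}(\overline{x},\overline{z})).
\]
Introducing the fully lowered tensors $\Theta _{\overline{x}\overline{y}\overline{z}}:=(\ ^{\circ }\mathcal{D}_{\overline{x}}\theta )(\overline{y},\overline{z})$ and $\mathcal{Z}_{\overline{x}\overline{y}\overline{z}}:=\theta (\mathcal{Z}(\overline{x},\overline{y}),\overline{z})$, and using that every d--connection preserves the antisymmetry of the $2$--form $\theta $ (so that $\Theta $ is antisymmetric in its last two arguments), the requirement $\ ^{\theta }\mathcal{D}\theta =0$ collapses to the purely algebraic, pointwise equation $\mathcal{Z}_{\overline{x}\overline{y}\overline{z}}-\mathcal{Z}_{\overline{x}\overline{z}\overline{y}}=\Theta _{\overline{x}\overline{y}\overline{z}}$.

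The decisive step is to solve this relation, and here the nondegeneracy of $\theta $ enters. Because $\Theta $ is already antisymmetric in $(\overline{y},\overline{z})$, the choice $\mathcal{Z}_{\overline{x}\overline{y}\overline{z}}:=\tfrac{1}{2}\Theta _{\overline{x}\overline{y}\overline{z}}$ satisfies it identically, and raising the last index by the inverse form $\theta ^{-1}$ (which exists precisely because $\theta $ is nondegenerate) yields the required $(1,2)$ d--tensor $\mathcal{Z}(\overline{x},\overline{y})$. I would stress that $\Theta $ is a genuine d--tensor, being the covariant derivative by the d--connection $\ ^{\circ }\mathcal{D}$ of the N--adapted $2$--form $\theta =h\theta +v\theta $ of Proposition \ref{prop02}, and that contraction with $\theta ^{-1}$ preserves this property; hence $\ ^{\theta }\mathcal{D}=\ ^{\circ }\mathcal{D}+\mathcal{Z}$ is indeed a d--connection with $\ ^{\theta }\mathcal{D}\theta =0$. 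Performing the computation in the N--adapted frames (\ref{dderalg}) and (\ref{ddifalg}) splits $\Theta $ and $\mathcal{Z}$ into independent $h$-- and $v$--blocks, so the construction may be carried out separately on $h\mathcal{T}^{\mathbf{E}}\mathbf{E}$ and $v\mathcal{T}^{\mathbf{E}}\mathbf{E}$.

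The main obstacle I anticipate is not the existence argument itself, which reduces to the elementary affine deformation plus index raising above, but checking that all objects remain N--adapted throughout, i.e. that $\mathcal{Z}$ is a true d--tensor and does not couple the $h$-- and $v$--distributions. This is precisely what is controlled by the splitting $\theta =h\theta +v\theta $ together with the fact that $\ ^{\circ }\mathcal{D}$ is itself a d--connection. Once this is secured, uniqueness is not to be expected (the symmetric part of $\mathcal{Z}$ is free), so only existence is asserted, which is exactly the content of the statement.
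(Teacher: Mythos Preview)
Your proposal is correct and is essentially the coordinate--free version of the paper's own proof: the paper simply writes down, in N--adapted components, the deformation $\ ^{\theta }\mathbf{\Gamma }_{\overline{\beta }\overline{\gamma }}^{\overline{\alpha }}=\ ^{\circ }\mathbf{\Gamma }_{\overline{\beta }\overline{\gamma }}^{\overline{\alpha }}+\tfrac{1}{2}\theta ^{\overline{\alpha }\overline{\varepsilon }}\ ^{\circ }\mathcal{D}_{\overline{\gamma }}\theta _{\overline{\beta }\overline{\varepsilon }}$ (split into its four $h$--$v$ blocks, equations (\ref{csdc})) and asks the reader to verify $\ ^{\theta }\mathcal{D}\theta =0$, which is precisely your choice $\mathcal{Z}_{\overline{x}\overline{y}\overline{z}}=\tfrac{1}{2}\Theta _{\overline{x}\overline{y}\overline{z}}$ with the last index raised by $\theta ^{-1}$. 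Your derivation of this formula from the affine deformation is more explicit than the paper's verification--only presentation, but the construction is the same.
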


\begin{proof}
Let us consider a $\theta $ in N--adapted form (\ref{aux03}). Introducing%
\begin{eqnarray*}
\ ^{\circ }\mathcal{D} &=&\left\{ h\ \ ^{\circ }\mathcal{D}=(\ _{h}^{\circ }%
\mathcal{D}_{a},\ _{v}^{\circ }\mathcal{D}_{a});\ v\ ^{\circ }\mathcal{D}=(\
_{h}^{\circ }\mathcal{D}_{A},\ _{v}^{\circ }\mathcal{D}_{A})\right\} \\
&=&\{\ ^{\circ }\mathbf{\Gamma }_{\overline{\beta }\overline{\gamma }}^{%
\overline{\alpha }}=(\ ^{\circ }\mathbf{L}_{bf}^{a},\ ^{\circ }\mathbf{L}%
_{Bf\;}^{A};\ ^{\circ }\mathbf{B}_{bC}^{a},\ ^{\circ }\mathbf{B}_{BC}^{A})\},
\end{eqnarray*}%
we can verify that%
\begin{eqnarray*}
\ ^{\theta }\mathcal{D} &=&\left\{ h\ \ ^{\theta }\mathcal{D}=(\
_{h}^{\theta }\mathcal{D}_{a},\ _{v}^{\theta }\mathcal{D}_{a});\ v\ ^{\theta
}\mathcal{D}=(\ _{h}^{\theta }\mathcal{D}_{A},\ _{v}^{\theta }\mathcal{D}%
_{A})\right\} \\
&=&\{\ ^{\theta }\mathbf{\Gamma }_{\overline{\beta }\overline{\gamma }}^{%
\overline{\alpha }}=(\ ^{\theta }\mathbf{L}_{bf}^{a},\ ^{\theta }\mathbf{L}%
_{Bf\;}^{A};\ ^{\theta }\mathbf{B}_{bC}^{a},\ ^{\theta }\mathbf{B}%
_{BC}^{A})\}, \mbox{ with }
\end{eqnarray*}%
\begin{eqnarray}
\ ^{\theta }\mathbf{L}_{bf}^{a} &=&\ ^{\circ }\mathbf{L}_{bf}^{a}+\frac{1}{2}%
\theta ^{ae}\ _{h}^{\circ }\mathcal{D}_{f}\theta _{be},\ \ ^{\theta }\mathbf{%
L}_{Bf\;}^{A}=\ ^{\circ }\mathbf{L}_{Bf\;}^{A}\ +\frac{1}{2}\theta ^{AE}\
_{v}^{\circ }\mathcal{D}_{f}\theta _{EB},  \label{csdc} \\
\ ^{\theta }\mathbf{B}_{bC}^{a} &=&\ ^{\circ }\mathbf{B}_{bC}^{a}+\frac{1}{2}%
\theta ^{ae}\ _{h}^{\circ }\mathcal{D}_{C}\theta _{be},\ \ ^{\theta }\mathbf{%
B}_{BC}^{A}=\ ^{\theta }\mathbf{B}_{BC}^{A}+\frac{1}{2}\theta ^{AE}\
_{v}^{\circ }\mathcal{D}_{C}\theta _{EB},  \notag
\end{eqnarray}%
satisfies the conditions $\ _{h}^{\theta }\mathcal{D}_{a}\theta _{be}=0,\
_{v}^{\theta }\mathcal{D}_{a}\theta _{AB}=0,\ _{h}^{\theta }\mathcal{D}%
_{A}\theta _{be}=0,\ _{v}^{\theta }\mathcal{D}_{A}\theta _{AB}=0,$ which are
h- and v--projections of $\ ^{\theta }\mathcal{D}_{\overline{x}}\theta =0$
from Definition \ref{defasstr}. $\square $
\end{proof}

\vskip5pt

Let us introduce the operators
\begin{equation}
\Theta _{cd}^{ab}=\frac{1}{2}(\delta _{c}^{a}\delta _{d}^{b}-\theta
_{cd}\theta ^{ab})\mbox{ and }\Theta _{CD}^{AB}=\frac{1}{2}(\delta
_{C}^{A}\delta _{D}^{B}-\theta _{CD}\theta ^{AB}).  \label{thop}
\end{equation}

\begin{theorem}
\label{setscdc}The set of d--connections $\ ^{s}\mathbf{\Gamma }_{\overline{%
\beta }\overline{\gamma }}^{\overline{\alpha }}=(\ ^{s}\mathbf{L}_{bf}^{a},\
^{s}\mathbf{L}_{Bf}^{A};\ ^{s}\mathbf{B}_{bC}^{a},\ ^{s}\mathbf{B}_{BC}^{A})$
labeled by an abstract left index "$s$", compatible with an almost
symplectic structure $\theta $ (\ref{aux03}), are parameterized by
\begin{eqnarray}
\ ^{s}\mathbf{L}_{bc}^{a} &=&\ ^{\theta }\mathbf{L}_{bc}^{a}+\Theta
_{be}^{da}\ \mathbf{Y}_{dc}^{e},\ ^{s}\mathbf{L}_{Bc}^{A}=\ ^{\theta }%
\mathbf{L}_{Bc}^{A}+\Theta _{BE}^{CA}\ \mathbf{Y}_{Cc}^{E},  \label{fsdc} \\
\ ^{s}\mathbf{B}_{bC}^{a} &=&\ ^{\theta }\mathbf{B}_{bC}^{a}+\Theta
_{bf}^{ea}\ \mathbf{Y}_{eC}^{f},\ \ ^{s}\mathbf{B}_{BC}^{A}=\ ^{\theta }%
\mathbf{B}_{BC}^{A}+\Theta _{BF}^{EA}\ \mathbf{Y}_{EC}^{F},\   \notag
\end{eqnarray}%
where the N--adapted coefficients are given by (\ref{csdc}), the $\Theta $%
--operators are those from (\ref{thop}) and $\mathbf{Y}_{\overline{\beta }%
\overline{\gamma }}^{\overline{\alpha }}=(\mathbf{Y}_{dc}^{e},\mathbf{Y}%
_{Cc}^{E},\mathbf{Y}_{eC}^{f},\mathbf{Y}_{EC}^{F})$ are arbitrary d--tensor
fields.
\end{theorem}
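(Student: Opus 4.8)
The plan is to characterize all almost-symplectic d–connections by starting from the one fixed connection $\ ^{\theta }\mathcal{D}$ supplied by the previous Lemma and then describing the full affine space of solutions to the compatibility equation $\ ^{s}\mathcal{D}_{\overline{x}}\theta =0$. Since the difference of two d–connections is a d–tensor, every compatible d–connection can be written as $\ ^{s}\mathbf{\Gamma }_{\overline{\beta }\overline{\gamma }}^{\overline{\alpha }}=\ ^{\theta }\mathbf{\Gamma }_{\overline{\beta }\overline{\gamma }}^{\overline{\alpha }}+\mathbf{Z}_{\overline{\beta }\overline{\gamma }}^{\overline{\alpha }}$ for some d–tensor $\mathbf{Z}$. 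The entire content of the theorem is then to identify exactly which deformation d–tensors $\mathbf{Z}$ preserve compatibility with $\theta $, and to show that these are precisely the projections of an \emph{arbitrary} d–tensor $\mathbf{Y}$ under the operators $\Theta $ from (\ref{thop}).

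First I would impose the compatibility condition on $\ ^{s}\mathcal{D}$. Because $\ ^{\theta }\mathcal{D}\theta =0$ already holds (by the Lemma), subtracting shows that $\mathbf{Z}$ must annihilate $\theta $ in the appropriate tensorial sense; concretely, in each h– and v–block the condition $\ ^{s}\mathcal{D}_{\overline{x}}\theta =0$ reduces to a linear algebraic constraint on the coefficients of $\mathbf{Z}$, schematically of the form $\theta _{ae}\mathbf{Z}_{bc}^{e}+\theta _{eb}\mathbf{Z}_{ac}^{e}=0$ (and analogously for the mixed and vertical indices). This says the lower-index symmetrization of $\mathbf{Z}$ against $\theta $ vanishes, i.e. $\mathbf{Z}$ is $\theta $–skew in its first two indices. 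The second step is to verify that the operator $\Theta _{be}^{da}=\tfrac{1}{2}(\delta _{b}^{d}\delta _{e}^{a}-\theta _{be}\theta ^{da})$ is exactly the projector onto this solution space: I would check that $\Theta $ is idempotent and that $\Theta _{be}^{da}\mathbf{Y}_{dc}^{e}$ automatically satisfies the $\theta $–skew constraint for every $\mathbf{Y}$, while any solution $\mathbf{Z}$ of the constraint satisfies $\Theta \mathbf{Z}=\mathbf{Z}$. This establishes that formulas (\ref{fsdc}) produce compatible d–connections and that every compatible one arises this way, with $\mathbf{Y}$ ranging freely over all d–tensors.

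The main obstacle is the bookkeeping across the four index blocks $(a,A;b,B;c,C)$: the splitting $\overline{\alpha }=(a,A)$ means the single algebraic identity must be verified separately for $\ ^{s}\mathbf{L}_{bc}^{a}$, $\ ^{s}\mathbf{L}_{Bc}^{A}$, $\ ^{s}\mathbf{B}_{bC}^{a}$ and $\ ^{s}\mathbf{B}_{BC}^{A}$, using the two distinct operators $\Theta _{cd}^{ab}$ and $\Theta _{CD}^{AB}$ together with the block-diagonal form (\ref{aux03}) of $\theta $. The key technical fact making each block work is the nondegeneracy of $\theta $ (so that $\theta ^{ae}\theta _{eb}=\delta _{b}^{a}$ within each block), which guarantees $\Theta ^2=\Theta $; I would state this once and apply it uniformly. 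Thus the proof is essentially a projection-operator argument repeated in each h– and v–sector, and the only genuine subtlety is confirming that the mixed blocks close under $\Theta $ given that $\theta $ couples $h$– and $v$–indices only through the almost complex structure, not through off-diagonal $\theta $–components.
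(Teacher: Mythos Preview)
Your proposal is correct and is exactly the ``straightforward N--adapted computation'' the paper invokes as its entire proof. You have actually supplied the details the paper omits: the affine reduction $\ ^{s}\mathbf{\Gamma }=\ ^{\theta }\mathbf{\Gamma }+\mathbf{Z}$, the linear constraint $\theta _{eb}\mathbf{Z}_{ac}^{e}+\theta _{ae}\mathbf{Z}_{bc}^{e}=0$, and the verification that $\Theta $ is the idempotent projector onto the solution space (using $\theta ^{ae}\theta _{eb}=\delta _{b}^{a}$), repeated blockwise for the four $h$--$v$ sectors.
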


\begin{proof}
It follows from straightforward N--adapted computations. $\square $
\end{proof}

\begin{remark}
The Lie algebroid structure functions $C_{bf}^{d}$ in (\ref{candcon}) can be
considered as an example of d--tensor fields $\mathbf{Y}_{\overline{\beta }%
\overline{\gamma }}^{\overline{\alpha }}$ in (\ref{fsdc}). On $\mathcal{T}^{%
\mathbf{E}}\mathbf{E,}$ we can work as on $\mathbf{TM}$ but for differnet
classes of nonholonomic distributions for sections. The d--connections $%
\widehat{\mathcal{D}},\ ^{\theta }\mathcal{D}$ can be constructed for
correspondingly defined N--connection structures $\mathcal{N},\widetilde{%
\mathcal{N}}$ when the main geometric properties are similar to some
geometric models with $\widehat{\mathbf{D}},\ ^{\theta }\mathbf{D}$ and
certain $\mathbf{N},\widetilde{\mathbf{N}}.$ The nonholonomic frame
structures on Lie d--algebroids are different from those on nonholonomic
tangent bundles because in the first case the vierbein fields encode the
ancor and Lie type structure functions.
\end{remark}

We can select a subclass of metric and/or almost symplectic compatible
d--connections on $\mathcal{T}^{\mathbf{E}}\mathbf{E}$ which are completely
defined by $\mathbf{g}$ and prescribed by an effective Lagrange structure $%
\mathcal{L}(x,y).$

\begin{theorem}
On $\mathcal{T}^{\mathbf{E}}\mathbf{E,}$ there is a unique normal
d--connection $\ ^{n}\mathcal{D} =$
\begin{eqnarray}
&& \{ h\ ^{n}\mathcal{D}=(\ _{h}^{n}\mathcal{D}_{a}=\widehat{\mathcal{D}}%
_{a},\ _{v}^{n}\mathcal{D}_{a}=\widehat{\mathcal{D}}_{a});v\ ^{n}\mathcal{D}%
=(\ _{h}^{n}\mathcal{D}_{A}=\widehat{\mathcal{D}}_{A},\ _{v}^{n}\mathcal{D}%
_{A}=\widehat{\mathcal{D}}_{A}) \}  \notag \\
&& = \{\ ^{n}\mathbf{\Gamma }_{\overline{\beta }\overline{\gamma }}^{%
\overline{\alpha }}=(\ ^{n}\mathbf{L}_{bf}^{a}=\widehat{\mathbf{L}}%
_{bf}^{a}, \ ^{n}\mathbf{L}_{B=m+b\ f\;}^{A=m+a}=\widehat{\mathbf{L}}%
_{bf}^{a};  \notag \\
&& \quad \ ^{n}\mathbf{B}_{b=B-m\ C}^{a=A-m}=\widehat{\mathbf{B}}_{BC}^{A},\
^{n}\mathbf{B}_{BC}^{A}=\widehat{\mathbf{B}}_{BC}^{A})\},  \label{ndc}
\end{eqnarray}%
which is metric compatible, $\widehat{\mathcal{D}}_{a}\mathbf{\tilde{g}}%
_{bc}=0$ and $\widehat{\mathcal{D}}_{A}\mathbf{\tilde{g}}_{BC}=0,$ and
completely defined by $\overline{\mathbf{g}}=\mathbf{\tilde{g}}$ and a fixed
$\mathcal{L}(x,y).$
\end{theorem}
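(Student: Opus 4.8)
The plan is to realize $\ ^{n}\mathcal{D}$ as the unique N--adapted linear connection that is simultaneously compatible with the canonical d--metric $\widetilde{\mathbf{g}}$ (\ref{ldm}) and with the canonical almost complex structure $\widetilde{\mathcal{J}}$ of Proposition \ref{propak}. The structural fact I would exploit is that, for the Sasaki lift (\ref{ldm}), the $h$-- and $v$--blocks of the metric are literally the same Hessian, $\tilde{g}_{ab}=\tilde{g}_{AB}$ under the identification $A=m+a$, so that $\widetilde{\mathcal{J}}$ interchanges the two sub-bundles isometrically. Requiring parallelism of $\widetilde{\mathcal{J}}$, i.e. $\ ^{n}\mathcal{D}\,\widetilde{\mathcal{J}}=0$, then forces the $h$-- and $v$--coefficients of the connection to be intertwined exactly as written in (\ref{ndc}), which is precisely the ``normal'' property to be established.

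I would carry this out in the following order. First, I invoke Theorem \ref{thcdc} to produce the canonical d--connection $\widehat{\mathcal{D}}$ with its explicit N--adapted coefficients, which already satisfies $\widehat{\mathcal{D}}\,\overline{\mathbf{g}}=0$. Second, I define $\ ^{n}\mathcal{D}$ by the identification (\ref{ndc}), setting its four coefficient blocks equal (up to the index shift $A=m+a$) to the corresponding $\widehat{\mathbf{L}}$-- and $\widehat{\mathbf{B}}$--coefficients. Third, I verify metric compatibility: since $\tilde{g}_{ab}=\tilde{g}_{AB}$ and the mixed blocks vanish for the canonical d--metric, the stated conditions $\widehat{\mathcal{D}}_{a}\tilde{g}_{bc}=0$ and $\widehat{\mathcal{D}}_{A}\tilde{g}_{BC}=0$ follow directly from $\widehat{\mathcal{D}}\,\overline{\mathbf{g}}=0$, the remaining mixed conditions being automatic. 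Fourth, I check $\ ^{n}\mathcal{D}\,\widetilde{\mathcal{J}}=0$ by computing $\ ^{n}\mathcal{D}_{\overline{x}}(\widetilde{\mathcal{J}}\,\overline{y})-\widetilde{\mathcal{J}}(\ ^{n}\mathcal{D}_{\overline{x}}\,\overline{y})$ on the basis $\{\widetilde{\mathbf{e}}_{a},\mathcal{V}_{A}\}$; using the defining relations $\widetilde{\mathcal{J}}(\widetilde{\mathbf{e}}_a)=-\mathcal{V}_{m+a}$ and $\widetilde{\mathcal{J}}(\mathcal{V}_{m+a})=\widetilde{\mathbf{e}}_a$ of Proposition \ref{propak}, this vanishes precisely because of the identification (\ref{ndc}).

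For uniqueness, I would argue that any N--adapted $\ ^{n}\mathcal{D}$ with $\ ^{n}\mathcal{D}\,\widetilde{\mathbf{g}}=0$ and $\ ^{n}\mathcal{D}\,\widetilde{\mathcal{J}}=0$ is fixed: the almost K\"{a}hler identity $d\tilde{\theta}=0$ of Theorem \ref{thmr2}, together with $\tilde{\theta}(\overline{x},\overline{y})=\widetilde{\mathbf{g}}(\widetilde{\mathcal{J}}\overline{x},\overline{y})$, turns the two parallelism requirements into a Koszul-type system that solves uniquely for each coefficient block in terms of $\widetilde{\mathbf{g}}$ and the N--adapted frame derivatives. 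Equivalently, within the parameterization (\ref{fsdc}) of Theorem \ref{setscdc}, almost complex compatibility pins down the free d--tensor $\mathbf{Y}_{\overline{\beta}\overline{\gamma}}^{\overline{\alpha}}$ to the single value that reproduces (\ref{ndc}). Since $\widetilde{\mathcal{N}}$ and $\widetilde{\mathbf{g}}$ are themselves built from the fixed $\mathcal{L}(x,y)$, the resulting connection is completely determined by $\overline{\mathbf{g}}=\widetilde{\mathbf{g}}$ and $\mathcal{L}$.

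The main obstacle I anticipate is the Lie algebroid structure: unlike the tangent-bundle case, the N--adapted frames (\ref{dderalg})--(\ref{ddifalg}) carry the anchor $\rho_a^i$ and the structure functions $C_{bf}^a$, and these enter the horizontal coefficients $\widehat{L}_{bf}^a$ and produce the nonzero torsion $\widehat{T}_{bf}^a=C_{bf}^a$ of Theorem \ref{thcdc}. I must therefore confirm that the identification (\ref{ndc}) remains consistent with both compatibility conditions once these algebroid corrections are present --- concretely, that the extra $C_{bf}^a$--terms arising through the frame derivatives $\delta_a$ enter symmetrically and cancel in the Koszul computation, so that the $h$-- and $v$--blocks can indeed be set equal. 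Establishing this cancellation, rather than the routine metric-compatibility bookkeeping, is the crux of the argument.
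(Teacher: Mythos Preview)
Your approach is more conceptual than the paper's and takes a genuinely different route. The paper's proof is a bare explicit construction: it simply writes down the two Christoffel-type blocks
\[
\widehat{\mathbf{L}}_{bf}^{a}=\tfrac{1}{2}\widetilde{\mathbf{g}}^{ae}(\delta _{f}\widetilde{\mathbf{g}}_{be}+\delta _{b}\widetilde{\mathbf{g}}_{fe}-\delta _{e}\widetilde{\mathbf{g}}_{bf}),\qquad
\widehat{\mathbf{B}}_{BC}^{A}=\tfrac{1}{2}\widetilde{\mathbf{g}}^{AD}(\mathcal{V}_{C}\widetilde{\mathbf{g}}_{BD}+\mathcal{V}_{B}\widetilde{\mathbf{g}}_{CD}-\mathcal{V}_{D}\widetilde{\mathbf{g}}_{BC}),
\]
declares that these, together with the index identifications in (\ref{ndc}), define the Cartan-type d--connection $\widetilde{\mathcal{D}}$, and asserts that metric compatibility and uniqueness follow by direct verification. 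There is no appeal to $\widetilde{\mathcal{J}}$, to Theorem \ref{thmr2}, or to the parameterization (\ref{fsdc}); the almost symplectic compatibility is deferred to the next theorem.

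Your idea of characterizing ``normal'' by $\ ^{n}\mathcal{D}\,\widetilde{\mathcal{J}}=0$ is sound and does indeed force the block identifications in (\ref{ndc}); it gives a cleaner explanation of \emph{why} only two independent coefficient blocks survive, and it packages uniqueness via a Koszul argument rather than by inspection. What you gain is conceptual clarity; what the paper's route gains is brevity and independence from the subsequent almost K\"{a}hler machinery.

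One genuine point to watch: your Step 1 invokes Theorem \ref{thcdc} to import the canonical d--connection coefficients (\ref{candcon}), but the horizontal block there contains the extra term $\tfrac{1}{2}\mathbf{g}^{ae}(\mathbf{g}_{bd}C_{fe}^{d}+\mathbf{g}_{fd}C_{eb}^{d}-\mathbf{g}_{ed}C_{bf}^{d})$, whereas the normal d--connection the paper actually constructs uses the bare Christoffel formula above \emph{without} this $C$--correction. So the ``cancellation'' you anticipate in your last paragraph is not what happens; rather, the two connections simply have different $h$--coefficients (and, correspondingly, different $h$--torsions). If you run your Koszul/$\widetilde{\mathcal{J}}$--parallelism argument from scratch, it will produce the $C$--free formula directly, and you should not route through (\ref{candcon}) at all. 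That detour is the only substantive gap in your plan.
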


\begin{proof}
We provide a proof constructing such a normal d--connection in explicit form
an example when $\ ^{n}\mathcal{D}=\widetilde{\mathcal{D}}$ generalizes the
concept of Cartan d--connection in Lagrange--Finsler geometry on $\mathbf{TM}
$. Such a d--connection is completely defined by couples of h-- and
v--components $\widetilde{\mathcal{D}}=(\widetilde{\mathcal{D}}_{a},%
\widetilde{\mathcal{D}}_{A}),$ i.e. $\ \widetilde{\mathbf{\Gamma }}_{%
\overline{\beta }\overline{\gamma }}^{\overline{\alpha }}=(\widehat{\mathbf{L%
}}_{bf}^{a},\widehat{\mathbf{B}}_{BC}^{A}).$ Let us chose {\small
\begin{equation}
\widehat{\mathbf{L}}_{bf}^{a}=\frac{1}{2}\widetilde{\mathbf{g}}^{ae}\
(\delta _{f}\widetilde{\mathbf{g}}_{be}+\delta _{b}\widetilde{\mathbf{g}}%
_{fe}-\delta _{e}\widetilde{\mathbf{g}}_{bf}),\ \widehat{\mathbf{B}}%
_{BC}^{A}=\frac{1}{2}\widetilde{\mathbf{g}}^{AD}\ (\mathcal{V}_{C}\widetilde{%
\mathbf{g}}_{BD}+\mathcal{V}_{B}\widetilde{\mathbf{g}}_{CD}-\mathcal{V}_{D}%
\widetilde{\mathbf{g}}_{BC}),  \label{cdcc}
\end{equation}%
} where the N--elongated derivatives are taken in the form (\ref{ddcanad})
and $\widetilde{\mathbf{g}}_{ab}=\widetilde{\mathbf{g}}_{A=m+a\ B=m+b}$ are
generated by canonical values using the Hessian (\ref{hessian}) and (\ref%
{ldm}) induced by a regular $\mathcal{L}(x,y),$ we can prove that this
d--connection is unique and satisfies the conditions of the theorem. Via
frame transforms, we can consider any metric structure $\overline{\mathbf{g}}%
\sim \mathbf{\tilde{g}.}$ $\square $
\end{proof}

\vskip5pt

Finally, we formulate this very important for our purposes result:

\begin{theorem}
The normal d--connection $\ ^{n}\mathcal{D}=\widetilde{\mathcal{D}}$ defines
a unique almost symplectic d--connection, $\widetilde{\mathcal{D}}\equiv \
^{\theta }\widetilde{\mathcal{D}},$ see Definition \ref{defasstra}, which is
N--adapted and compatible to the canonical almost symplecti form $\tilde{%
\theta}$ (\ref{canasf}), i.e. $\ ^{\theta }\widetilde{\mathcal{D}}\tilde{%
\theta}\mathbf{=}0$ and $\widetilde{\mathbf{T}}_{\ cb}^{a}=\widetilde{%
\mathbf{T}}_{\ CB}^{A}=0,$ see torsion coefficients (\ref{cdtors}).
\end{theorem}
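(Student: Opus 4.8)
The plan is to establish three things in sequence: first, that the normal d--connection $\widetilde{\mathcal{D}}$ is compatible with the canonical almost symplectic form $\tilde{\theta}$; second, that its $h$- and $v$-torsion components $\widetilde{\mathbf{T}}_{\ cb}^{a}$ and $\widetilde{\mathbf{T}}_{\ CB}^{A}$ vanish; and third, uniqueness. The key structural fact I would exploit is that on $\mathcal{T}^{\mathbf{E}}\mathbf{E}$ with canonical data, the almost complex structure $\widetilde{\mathcal{J}}$, the d--metric $\tilde{\mathbf{g}}$, and the almost symplectic form $\tilde{\theta}$ are all generated by the same regular $\mathcal{L}$ and satisfy the defining relation $\tilde{\theta}(\overline{x},\overline{y})=\tilde{\mathbf{g}}(\widetilde{\mathcal{J}}\overline{x},\overline{y})$ from Theorem \ref{thmr2}, which already gives $\mathcal{H}^{\mathbf{E}}\mathbf{E}=\mathcal{K}^{\mathbf{E}}\mathbf{E}$, i.e.\ $d\tilde{\theta}=0$.

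For the compatibility step, I would compute $\ ^{\theta}\widetilde{\mathcal{D}}\tilde{\theta}$ directly from the definition $\tilde{\theta}(\overline{x},\overline{y})=\tilde{\mathbf{g}}(\widetilde{\mathcal{J}}\overline{x},\overline{y})$ and use metric compatibility $\widetilde{\mathcal{D}}\tilde{\mathbf{g}}=0$ (which holds because $\widetilde{\mathcal{D}}$ is a normal d--connection, with $\widehat{\mathcal{D}}_{a}\tilde{\mathbf{g}}_{bc}=0$ and $\widehat{\mathcal{D}}_{A}\tilde{\mathbf{g}}_{BC}=0$). The only remaining ingredient is that $\widetilde{\mathcal{D}}$ preserves $\widetilde{\mathcal{J}}$, i.e.\ $\widetilde{\mathcal{D}}\widetilde{\mathcal{J}}=0$; this follows because both the h-- and v--coefficients in (\ref{cdcc}) are built from the same Hessian-generated metric $\tilde{\mathbf{g}}_{ab}=\tilde{\mathbf{g}}_{A=m+a\ B=m+b}$ that defines $\widetilde{\mathcal{J}}$ via $\widetilde{\mathcal{J}}(\widetilde{\mathbf{e}}_{a})=-\mathcal{V}_{m+a}$, so the identification of h-- and v--coefficients forces $\widetilde{\mathcal{D}}$ to commute with the operator swapping the two blocks. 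Combining $\widetilde{\mathcal{D}}\tilde{\mathbf{g}}=0$ and $\widetilde{\mathcal{D}}\widetilde{\mathcal{J}}=0$ yields $\ ^{\theta}\widetilde{\mathcal{D}}\tilde{\theta}=0$, so $\widetilde{\mathcal{D}}$ is indeed an almost symplectic d--connection in the sense of Definition \ref{defasstra}.

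For the torsion step, I would insert the canonical coefficients (\ref{cdcc}) into the N--adapted torsion formulas (\ref{cdtors}). The pure-h torsion $\widetilde{\mathbf{T}}_{\ cb}^{a}$ and the pure-v torsion $\widetilde{\mathbf{T}}_{\ CB}^{A}$ vanish because $\widehat{\mathbf{L}}_{bf}^{a}$ and $\widehat{\mathbf{B}}_{BC}^{A}$ are each symmetric in their lower pair of indices in the sense dictated by the Christoffel-like structure in (\ref{cdcc}); the antisymmetrization that enters the torsion then cancels. I would note carefully that these are exactly the two components claimed to vanish, while other torsion pieces (e.g.\ mixed $h$--$v$ components and those carrying the structure constants $C_{bf}^{a}$) need not vanish on a genuine Lie d--algebroid, consistent with the earlier Remark that $h\widehat{\mathcal{T}}^{\alpha}\neq 0$ on $\mathcal{T}^{\mathbf{E}}\mathbf{P}$.

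For uniqueness I would invoke Theorem \ref{setscdc}: any almost symplectic d--connection compatible with $\tilde{\theta}$ differs from the base $\ ^{\theta}\mathcal{D}$ by $\Theta$-projected arbitrary d--tensors $\mathbf{Y}_{\overline{\beta}\overline{\gamma}}^{\overline{\alpha}}$, and imposing the prescribed vanishing of $\widetilde{\mathbf{T}}_{\ cb}^{a}$ and $\widetilde{\mathbf{T}}_{\ CB}^{A}$ together with the normal-d--connection identification of h-- and v--coefficients fixes these free tensors uniquely, pinning down $\ ^{\theta}\widetilde{\mathcal{D}}=\widetilde{\mathcal{D}}$. \textbf{The main obstacle} I anticipate is the torsion bookkeeping: on a Lie d--algebroid the anholonomy coefficients $W_{\overline{\alpha}\overline{\beta}}^{\overline{\gamma}}=\{C_{ab}^{f},\Omega_{ab}^{C},\partial_{B}\mathcal{N}_{c}^{C}\}$ enter the torsion through the generalized brackets (\ref{extercalc1}), so I must verify that the extra $C_{ab}^{f}$ and curvature $\Omega_{ab}^{C}$ terms land only in the mixed or structure-constant torsion components and do \emph{not} contaminate the pure-h and pure-v pieces that are required to vanish — this is precisely where the Lie-algebroid geometry departs from the tangent-bundle case and where a careless index computation would fail.
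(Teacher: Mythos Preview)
Your approach is essentially the paper's: the paper's entire proof is the one--line remark ``Using the coefficients (\ref{cdcc}), we can check that such a normal d--connection satisfies the conditions of this theorem,'' i.e.\ a direct verification from the explicit N--adapted coefficients. Your three--step organization (compatibility via $\widetilde{\mathcal{D}}\tilde{\mathbf{g}}=0$ and $\widetilde{\mathcal{D}}\widetilde{\mathcal{J}}=0$, torsion by plugging (\ref{cdcc}) into (\ref{cdtors}), uniqueness via Theorem~\ref{setscdc}) is a more articulated version of the same computation.

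One point deserves comment. You correctly flag as ``the main obstacle'' the possible contamination of the pure--$h$ torsion by the structure constants $C_{ab}^{f}$: from the general formula (\ref{dtors}) one has $\mathbf{T}_{\ bf}^{a}=\mathbf{L}_{\ bf}^{a}-\mathbf{L}_{\ fb}^{a}+C_{\ bf}^{a}$, and since the Christoffel--type $\widehat{\mathbf{L}}_{bf}^{a}$ in (\ref{cdcc}) is symmetric in $b,f$, the naive calculation would leave $\widetilde{\mathbf{T}}_{\ bf}^{a}=C_{\ bf}^{a}\neq 0$. The paper does not address this in the proof itself; the resolution is tucked into Appendix~A.4, where after stating (\ref{cdtors}) the text remarks that ``we have chosen such nonholonomic distributions when the $Y$--fields from the Theorem~\ref{setscdc} are such way stated that the formulas (\ref{cdtors}) on $\mathcal{T}^{\mathbf{E}}\mathbf{E}$ are similar to those on $\mathbf{TM}$.'' In other words, the freedom in the $\mathbf{Y}$--tensors of Theorem~\ref{setscdc} is used to absorb the algebroid structure constants so that the Cartan--type torsion mimics the tangent--bundle case. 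Your identification of this as the crux is on target; to complete your argument you should make that absorption explicit rather than merely flagging it.
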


\begin{proof}
Using the coefficients (\ref{cdcc}), we can check that such a normal
d--connection satisfies the conditions of this theorem. $\square $
\end{proof}

\vskip3pt

\begin{conclusion}
\label{cequiv}Prescribing an effective generating function $\mathcal{L}$ on $%
\mathcal{T}^{\mathbf{E}}\mathbf{E,}$ we can transform this prolongation Lie
d--algebroid into a canonical almost K\"{a}hler one, $\mathcal{K}^{\mathbf{E}%
}\mathbf{E}.$ It is possible to work equivalently with any geometric data
\begin{equation*}
\left[ \overline{\mathbf{g}},\mathcal{N},\widehat{\mathcal{D}}=\overline{%
\mathbf{\nabla }}+\widehat{\mathcal{Z}}\right] \approx \left[ \mathbf{\tilde{%
g},}\mathcal{L},\widetilde{\mathcal{N}},\widetilde{\mathcal{D}}\right]
\approx \left[ \tilde{\theta}(\cdot ,\cdot )\mathbf{:=\tilde{g}}(\widetilde{%
\mathcal{J}}\cdot \mathbf{,}\cdot ),\ ^{\theta }\widetilde{\mathcal{D}}%
\right] .
\end{equation*}%
The Lie algebroid structure functions $(\rho _{a}^{i},C_{ab}^{f})$ are
encoded into nonholonomic distributions on $\mathcal{T}^{\mathbf{E}}\mathbf{E%
}$ determining such equivalent prolongation Lie d--algebroid configurations.
\end{conclusion}

\subsection{Almost K\"{a}hler Einstein and Lagrange Lie d--algebroids}

We can formulate analogs of Einstein equations for different classes of
d--connections on prolongation Lie d--algebroids and almost K\"{a}hler
models.

\begin{corollary}
--\textbf{Definition} \label{corolricci} The Ricci tensor of a d--connection
$\mathcal{D}$ on a $\mathcal{T}^{\mathbf{E}}\mathbf{P}$ endowed with
d--metric structure $\overline{\mathbf{g}}$ is defined following formula $%
\mathcal{R}ic=\{\mathbf{R}_{\overline{\alpha }\overline{\beta }}:=\mathbf{R}%
_{\ \overline{\alpha }\overline{\beta }\overline{\gamma }}^{\overline{\gamma
}}\},$ see the coefficients (\ref{dcurv}) for Riemannian d--tensor $\mathcal{%
R}_{~\overline{\beta }}^{\overline{\alpha }}=\{\mathbf{R}_{\ \overline{\beta
}\overline{\gamma }\overline{\delta }}^{\overline{\alpha }}\}$, and
characterized by N--adapted coefficients{\small
\begin{equation}
\mathbf{R}_{\overline{\alpha }\overline{\beta }}=\{\mathbf{R}_{ab}:=\mathbf{R%
}_{\ abc}^{c},\ \mathbf{R}_{aA}:=-\mathbf{R}_{\ ~acA}^{c},\ \mathbf{R}_{Aa}:=%
\mathbf{R}_{\ ~AaB}^{B},\ \mathbf{R}_{AB}:=\mathbf{R}_{\ ~ABC}^{C}\}.
\label{driccialg}
\end{equation}%
}
\end{corollary}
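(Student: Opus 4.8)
The plan is to obtain the four irreducible N--adapted blocks of (\ref{driccialg}) from the single contraction $\mathbf{R}_{\overline{\alpha}\overline{\beta}}:=\mathbf{R}_{\ \overline{\alpha}\overline{\beta}\overline{\gamma}}^{\overline{\gamma}}$, using only that $\mathcal{D}$ is a d--connection. First I would invoke Definition \ref{defdcon}: since $\mathcal{D}$ preserves under parallelism the splitting (\ref{nonalg}), the curvature operator $\mathcal{R}(\overline{x},\overline{y})=\mathcal{D}_{\overline{x}}\mathcal{D}_{\overline{y}}-\mathcal{D}_{\overline{y}}\mathcal{D}_{\overline{x}}-\mathcal{D}_{\lfloor\overline{x},\overline{y}\rfloor^{\pi}}$ is an endomorphism mapping $h\mathcal{T}^{\mathbf{E}}\mathbf{P}$ into itself and $v\mathcal{T}^{\mathbf{E}}\mathbf{P}$ into itself. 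In the N--adapted frame $\mathbf{e}_{\overline{\alpha}}=\{\delta_a,\mathcal{V}_A\}$ of (\ref{dderalg}), the d--curvature $\mathbf{R}_{\ \overline{\beta}\overline{\gamma}\overline{\delta}}^{\overline{\alpha}}$ is the matrix of this endomorphism with output index $\overline{\alpha}$ and input index $\overline{\beta}$, so it vanishes unless $\overline{\alpha}$ and $\overline{\beta}$ are of the same type ($h$/lowercase or $v$/uppercase); this is exactly the block pattern of the explicit coefficients (\ref{dcurv}) in the Appendix.

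Next I would split the free indices according to $\overline{\alpha}=(a,A)$, $\overline{\beta}=(b,B)$ and sum the dummy index over both sectors $\overline{\gamma}=(c,C)$. In $\mathbf{R}_{\ \overline{\alpha}\overline{\beta}\overline{\gamma}}^{\overline{\gamma}}$ the contracted upper index $\overline{\gamma}$ is the output and the first lower index $\overline{\alpha}$ is the input, so the vanishing rule forces $\overline{\gamma}$ to match the type of $\overline{\alpha}$. Hence in the $(h,h)$ case only the $\overline{\gamma}=c$ term survives, giving $\mathbf{R}_{ab}=\mathbf{R}_{\ abc}^{c}$; in the $(v,v)$ case only the $\overline{\gamma}=C$ term survives, giving $\mathbf{R}_{AB}=\mathbf{R}_{\ ABC}^{C}$; for the mixed pair $(h,v)$ the surviving term is the $h$--contraction $\mathbf{R}_{\ aAc}^{c}$, and for $(v,h)$ it is the $v$--contraction $\mathbf{R}_{\ AaB}^{B}=\mathbf{R}_{Aa}$.

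The step I expect to be the main obstacle is fixing the sign in the definition $\mathbf{R}_{aA}:=-\mathbf{R}_{\ acA}^{c}$. This minus sign is not a free convention: it is forced by the antisymmetry of the curvature $2$--form in its last pair of lower indices, $\mathbf{R}_{\ \overline{\beta}\overline{\gamma}\overline{\delta}}^{\overline{\alpha}}=-\mathbf{R}_{\ \overline{\beta}\overline{\delta}\overline{\gamma}}^{\overline{\alpha}}$, which itself follows from $\mathcal{R}(\overline{x},\overline{y})=-\mathcal{R}(\overline{y},\overline{x})$. The raw $h$--contraction found above is $\mathbf{R}_{\ aAc}^{c}$, in which the dummy $c$ sits in the last slot; transposing it with $A$ to bring the expression into the same index orientation used for the $(h,h)$ and $(v,v)$ blocks yields $\mathbf{R}_{\ aAc}^{c}=-\mathbf{R}_{\ acA}^{c}=\mathbf{R}_{aA}$. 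No analogous transposition is needed for $\mathbf{R}_{Aa}=\mathbf{R}_{\ AaB}^{B}$, since there the dummy $B$ already occupies the last slot; this is the origin of the asymmetry between the two mixed components. As the four cases exhaust all $h$/$v$ combinations of $(\overline{\alpha},\overline{\beta})$ and each reduces, by the block rule and the antisymmetry, to an unambiguous contraction of (\ref{dcurv}), the set $\{\mathbf{R}_{ab},\mathbf{R}_{aA},\mathbf{R}_{Aa},\mathbf{R}_{AB}\}$ is precisely the collection of independent N--adapted components of $\mathcal{R}ic$, which establishes (\ref{driccialg}). The argument is purely algebraic once (\ref{dcurv}) is in hand. $\square$
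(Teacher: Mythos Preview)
Your proof is correct and follows the same route as the paper, which simply states that ``the formulas for $h$--$v$--components (\ref{driccialg}) are respective contractions of the coefficients (\ref{dcurv}).'' You have merely unpacked what that one sentence means: the block structure $\mathbf{R}^{\overline\alpha}_{\ \overline\beta\overline\gamma\overline\delta}=0$ unless $\overline\alpha,\overline\beta$ share type (forced by Definition~\ref{defdcon}) pins the dummy $\overline\gamma$ to the sector of the first lower index, and the antisymmetry in the last pair accounts for the sign in $\mathbf{R}_{aA}=-\mathbf{R}^{c}_{\ acA}$; the paper leaves both of these points implicit.
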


\begin{proof}
The formulas for $h$--$v$--components (\ref{driccialg}) are respective
contractions of the coefficients (\ref{dcurv}). $\square $
\end{proof}

\vskip5pt

The scalar curvature $\ ^{s}\mathbf{R}$ of $\ \mathcal{D}$ is by definition
\begin{equation}
\ ^{s}\mathbf{R}:=\mathbf{g}^{\overline{\alpha }\overline{\beta }}\mathbf{R}%
_{\overline{\alpha }\overline{\beta }}=\mathbf{g}^{ab}\mathbf{R}_{ab}+%
\mathbf{g}^{AB}\mathbf{R}_{AB}.  \label{sdcurv}
\end{equation}%
Using (\ref{driccialg}) and (\ref{sdcurv}), we compute the Einstein
d--tensor $\mathbf{E}_{\overline{\alpha }\overline{\beta }}:=\mathbf{R}_{%
\overline{\alpha }\overline{\beta }}-\frac{1}{2}\mathbf{g}_{\overline{\alpha
}\overline{\beta }}\ \ ^{s}\mathbf{R}$ of $\mathcal{D}.$ Such a tensor can
be used for modeling effective gravity theories on sections of $\mathcal{T}$
$^{\mathbf{E}}\mathbf{P}$ with nonholonomic frame structure \cite%
{vcliffalg,vhamlagralg}.

\section{Almost K\"{a}hler -- Ricci Lie Algebroid Evolution}

\label{s3}

The goal of this section is to prove that N--adapted Ricci flow theories for
almost K\"{a}hler models of prolongation Lie algebroids, $\mathcal{K}^{%
\mathbf{E}}\mathbf{E,}$ can be formulated as models of generalized gradient
nonholonomic flows. We extend the Grisha Perelman's geometric thermodynamic
functional approach \cite{gper1,gper2,gper3} and show how modified R.
Hamilton type equations \cite{ham1,ham2} can be derived for the almost K\"{a}%
hler evolution of Lie d--algebroids.

Symplectic and almost symplectic geometric flows have been studied in modern
Ricci flow theory, see a series of examples and reviews of results in \cite%
{kleiner,rbook,chau,streets}. Our approach is very different from those with
"pure" complex and/or symplectic forms and connections.

\subsection{Perelman's functionals in almost K\"{a}hler variables}

There are both conceptual and technical difficulties which do not allow us
to formulate a generalized Ricci flow theory for non--Riemannian geometries
with independent metric and linear connection structures, or their almost
symplectic analogs. Nevertheless, unified geometric evolution theories can
be constructed for certain classes of nonholonomic manifolds and
Lagrange--Finsler spaces \cite{vricci1,vricci2,vfinsler} both with metric
compatible and noncompatible N--adapted connections if the fundamental
geometric objects are determined in unique forms by distortion relations
completely determined by a metric and/or almost symplectic structure.
\begin{remark}
\label{remarkakdef} Proofs of theorems for almost K\"{a}hler models
on prolongation Lie d--algebroids on $\mathcal{T}$ $^{\mathbf{E}}\mathbf{E}$
and/or $\mathcal{K}^{\mathbf{E}}\mathbf{E}$ can be obtained by N--adapted nonholonomic
deformations of geometric constructions for the
standard Levi--Civita connection $\overline{\mathbf{\nabla }},$ when
 $\left[ \overline{\mathbf{g}}\sim \mathbf{\tilde{g},}\mathcal{L},\widetilde{%
\mathcal{N}},\widetilde{\mathcal{D}}\right] \approx \left[ \tilde{\theta}%
(\cdot ,\cdot )\mathbf{:=\tilde{g}}(\widetilde{\mathcal{J}}\cdot \mathbf{,}%
\cdot ),\ ^{\theta }\widetilde{\mathcal{D}}=\overline{\mathbf{\nabla }}+%
\widetilde{\mathcal{Z}}\right]$.
The functions determining nonholonomic distributions, N--connection
coefficients and Lie algebroid structure functions are considered to be of
the same smooth class as the coefficients of $\overline{\mathbf{g}}$ and $%
\overline{\mathbf{\nabla }}.$
\end{remark}

The theory of Lagrange--Ricci flows on $\mathcal{T}^{\mathbf{E}}\mathbf{E}$
is formulated for evolving nonholonomic dynamical systems on the space of
equivalent geometric data $\left( \mathcal{L}:\overline{\mathbf{g}},%
\overline{\nabla }\right) $, when the functionals $\ _{\shortmid }\mathcal{F}
$ and $\ _{\shortmid }\mathcal{W}$ are postulated to be of Lyapunov type,
see below formulas (\ref{2pfrs1}) and (\ref{2pfrs2}). Ricci solitonic
configurations are defined as \textquotedblright fixed\textquotedblright\ on
$\tau $ points of the corresponding dynamical systems. The "stationary"
variational conditions depend on what type of the Ricci tensor we use, for
instance, that one for the connections $\overline{\nabla }$ or $\widehat{%
\mathcal{D}}.$ We can elaborate N--adapted almost K\"{a}hler scenarios if
the Perelman's functionals are re--defined in terms of geometric data $(%
\widetilde{\mathbf{g}},\widetilde{\mathcal{D}})$ and the derived flow
equations are considered in N--adapted variables. Both approaches are
equivalent if the distortion relations are considered for the same family of
metrics, $\overline{\mathbf{g}}(\tau )=\widetilde{\mathbf{g}}(\tau )$
correspondingly computed for a set $\mathcal{L}(\tau ).$

\begin{lemma}
For the scalar curvature and Ricci tensor\ determined by the distortion
relation
\begin{equation}
\widetilde{\mathcal{D}}=\ ^{\theta }\widetilde{\mathcal{D}}=\overline{%
\mathbf{\nabla }}+\widetilde{\mathcal{Z}},  \label{akdralg}
\end{equation}%
the Perelman's functionals are defined equivalently in almost K\"{a}hler
canonical N--adapted variables, {\small
\begin{eqnarray}
\widetilde{\mathcal{F}}(\widetilde{\mathbf{g}},\widetilde{\mathcal{D}},%
\breve{f}) &=&\int_{\overline{\mathcal{V}}}(~^{s}\widetilde{\mathbf{R}}+|h%
\widetilde{\mathcal{D}}\breve{f}|^{2}+|\ v\widetilde{\mathcal{D}}\breve{f}%
|)^{2})e^{-\breve{f}}\ dv,  \label{2npf1} \\
\ \widetilde{\mathcal{W}}(\widetilde{\mathbf{g}},\widetilde{\mathcal{D}},%
\breve{f},\breve{\tau}) &=&\int_{\overline{\mathcal{V}}}[\breve{\tau}(~^{s}%
\widetilde{\mathbf{R}}+|h\widetilde{\mathcal{D}}\breve{f}|+|v\widetilde{%
\mathcal{D}}\breve{f}|)^{2}+\breve{f}-2m]\breve{\mu}dv,  \label{2npf2}
\end{eqnarray}%
} where the new scaling function $\breve{f}$ is intorduced for $\int_{%
\overline{\mathcal{V}}}\breve{\mu}dv=1$ with volume element $dv,$ $\breve{\mu%
}=\left( 4\pi \breve{\tau}\right) ^{-m}e^{-\breve{f}}$ and $\breve{\tau}>0,$
where $\breve{\tau}=\ ^{h}\tau =\ ^{v}\tau $ \ for a couple of possible $h$%
-- and $v$--flows parameters, $\breve{\tau}=(\ ^{h}\tau ,\ ^{v}\tau ).$
\end{lemma}

\begin{proof}
On $\mathcal{T}^{E}\mathbf{P,}$ evolution models can be formulated as in
standard theory for Riemann metrics \cite{ham1,ham2,gper1,gper2,gper3} but
for a family of geometric data $\left( \overline{\mathbf{g}}(\tau ),%
\overline{\nabla }(\tau )\right) $ induced by a family of regular $\mathcal{L%
}(\tau )\in C^{\infty }(P)$ with a flow parameter $\tau \in \lbrack
-\epsilon ,\epsilon ]\subset \mathbb{R},$ when $\epsilon >0$ is taken
sufficiently small. For $P=E,$ we can postulate on the space of $Sec(E),$
for $\pi :E\rightarrow M,$ $\dim E=n+m\geq 3$ and $\dim M=n\geq 2,$ the
(Perelman's) functionals
\begin{eqnarray}
\ _{\shortmid }\mathcal{F}(\overline{\mathbf{g}},\overline{\nabla },f,\tau )
&=&\int_{\overline{\mathcal{V}}}\left( \ _{\shortmid }R+\left\vert \overline{%
\nabla }f\right\vert ^{2}\right) e^{-f}\ dv,  \label{2pfrs1} \\
\ _{\shortmid }\mathcal{W}(\overline{\mathbf{g}},\overline{\nabla },f,\tau )
&=&\int_{\overline{\mathcal{V}}}\left[ \tau \left( \ _{\shortmid
}R+\left\vert \overline{\nabla }f\right\vert \right) ^{2}+f-2m)\right] \mu \
dv,  \label{2pfrs2}
\end{eqnarray}%
where the volume form $dv$ and scalar curvature $\ _{\shortmid }R$ of $%
\overline{\nabla }$ are computed for sets off--diagonal metrics $g_{%
\overline{\alpha }\overline{\beta }}$ (\ref{offd}) with Euclidean singature.
The integration is taken over compact regions $\overline{\mathcal{V}}\subset
\mathcal{T}^{\mathbf{E}}\mathbf{E},\dim \mathcal{V}=2m,$ corresponding to
sections over a $U\subset M.$ We can fix $\int_{\overline{\mathcal{V}}}dv=1,$
with $\mu =\left( 4\pi \tau \right) ^{-m}e^{-f}$ and a real parameter $\tau
>0.$ We introduce a new function $\breve{f}$ instead of\ $\ f.$ The scalar
functions are re--defined in such a form that the \textquotedblright
sub--integral\textquotedblright\ formula (\ref{2pfrs1}) under the distortion
of the Ricci tensor (\ref{driccidist}) is re--written in terms of geometric
objects derived for the canonical d--connection, $(\ _{\shortmid
}R+\left\vert \overline{\nabla }f\right\vert ^{2})e^{-f}=(\ ^{s}\widetilde{%
\mathbf{R}}+|\widetilde{\mathcal{D}}\breve{f}|^{2})e^{-\breve{f}}\ +%
\widetilde{\Phi }$. We obtain the N--adapted functional (\ref{2npf1}). For
the second functional (\ref{2pfrs2}), we re--scale $\tau \rightarrow \breve{%
\tau}$ and write
\begin{equation*}
\left[ \tau (\ _{\shortmid }R+\left\vert \overline{\nabla }f\right\vert
)^{2}+f-2m\right] \mu =[\breve{\tau}(~\ ^{s}\widetilde{\mathbf{R}}+|h%
\widetilde{\mathcal{D}}\breve{f}|+|\ v\widetilde{\mathcal{D}}\breve{f}|)^{2}+%
\breve{f}-2m]\breve{\mu}+\widetilde{\Phi }_{1},
\end{equation*}%
for some $\widetilde{\Phi }$ and $\widetilde{\Phi }_{1}$ for which $\int_{%
\overline{\mathcal{V}}}\widetilde{\Phi }dv=0$ and $\int_{\overline{\mathcal{V%
}}}\widetilde{\Phi }_{1}dv=0.$ Finally, we get the formula (\ref{2npf2}). $%
\square $
\end{proof}

\vskip5pt

In the rest of this section, we shall only sketch the key points for proofs of theorems
when the geometric constructions are straightforward consequences of those
presented for the Levi--Civita connection in \cite%
{gper1,gper2,gper3,caozhu,kleiner,rbook} and extended to nonholonomic
configurations in \cite{vricci1,vricci2,vfinsler,vncricci,vfracrf}.
For our models, we consider operators which up to frame transforms are
defined by $\mathcal{L}$ via $\overline{\nabla }$ on $\mathcal{T}^{\mathbf{E}%
}\mathbf{E.}$ Following Remark \ref{remarkakdef} for distortions completely
defined by $\widetilde{\mathbf{g}},$ we can study canonical Cartan and
almost K\"{a}hler Ricci flows on prolongation Lie algebroids as nonholonomic
deformations of the Riemannian evolution.

Using $\tilde{\theta}(\cdot \mathbf{,}\cdot )\mathbf{:=\tilde{g}}(\mathcal{J}%
\cdot \mathbf{,}\cdot ),$ we can define the canonical (almost symplectic)
Laplacian operator, $\widetilde{\Delta }:=$ $\widetilde{\mathcal{D}}%
\widetilde{\mathcal{D}},$ and (the Levi--Civita) Laplace operator, $%
\overline{\Delta }=\overline{\nabla }\overline{\nabla },$ and consider a
parameter $\tau (\chi ),$ $\partial \tau /\partial \chi =-1.$ For
simplicity, we shall not include normalized terms or values of type $\int_{%
\overline{\mathcal{V}}}\widetilde{\Phi }_{1}dv=0$ if those values can be
generated, or transformed into gradient type ones, via nonholonomic
deformations. Inverting the distortion relations (\ref{akdralg}), we can
compute
\begin{eqnarray}
\overline{\Delta } &=&\widetilde{\Delta }+~^{Z}\widetilde{\Delta },\ \ ~^{Z}%
\widetilde{\Delta }=\widetilde{\mathcal{Z}}_{\overline{\alpha }}\widetilde{%
\mathcal{Z}}^{\overline{\alpha }}-[\widetilde{\mathcal{D}}_{\overline{\alpha
}}(\widetilde{\mathcal{Z}}^{\overline{\alpha }})+\widetilde{\mathcal{Z}}_{%
\overline{\alpha }}\widetilde{\mathcal{D}}^{\overline{\alpha }}];
\label{distb} \\
\ \overline{R}_{\ \overline{\beta }\overline{\gamma }} &=&\widetilde{\mathbf{%
R}}_{\ \overline{\beta }\overline{\gamma }}+\widetilde{\mathbf{Z}}ic_{%
\overline{\beta }\overline{\gamma }},\ \ _{\shortmid }R=~\ ^{s}\widetilde{%
\mathbf{R}}+\widetilde{\mathbf{g}}^{\overline{\beta }\overline{\gamma }}%
\widetilde{\mathbf{Z}}ic_{\overline{\beta }\overline{\gamma }}=~\ ^{s}%
\widetilde{\mathbf{R}}+\ ~\ ^{s}\widetilde{\mathbf{Z}},  \notag \\
\ ~\ ^{s}\widetilde{\mathbf{Z}} &=&\widetilde{\mathbf{g}}^{\overline{\beta }%
\overline{\gamma }}\ \widetilde{\mathbf{Z}}ic_{\overline{\beta }\overline{%
\gamma }}=\ h\widetilde{Z}+v\widetilde{Z},\ h\widetilde{Z}=\widetilde{%
\mathbf{g}}^{ab}\ \widetilde{\mathbf{Z}}ic_{ab},\ v\widetilde{Z}=\widetilde{%
\mathbf{g}}^{AB}\ \widetilde{\mathbf{Z}}ic_{AB};  \notag \\
~\ ^{s}\overline{R} &=&\ h\overline{R}+v\overline{R},\ \ h\overline{R}:=%
\widetilde{\mathbf{g}}^{ab}\ \overline{R}_{ab},\ v\overline{R}=\widetilde{%
\mathbf{g}}^{AB}\overline{R}_{AB},  \notag
\end{eqnarray}%
where the terms with left up label "Z" are determined by $\widetilde{%
\mathcal{Z}}$ (for instance, $\widetilde{\mathbf{Z}}ic_{\overline{\beta }%
\overline{\gamma }}$ are components of respective deformations of the Ricci
d--tensor). For convenience, the capital indices $A,B,C...$ are used for
distinguishing $v$--components even the prolongation Lie algebroid is
constructed for $\mathbf{P=E}.$

\subsection{N--adapted almost symplectic evolution equations}

Let us consider the symmetrization and anti--symmetrization operators, for
instance, $\mathbf{R}_{(\alpha \beta )\ }:=\frac{1}{2}(\mathbf{R}_{\alpha
\beta \ }+\mathbf{R}_{\beta \alpha \ })$ and $\mathbf{R}_{[\alpha \beta ]\
}:=\frac{1}{2}(\mathbf{R}_{\alpha \beta \ }-\mathbf{R}_{\beta \alpha \ }).$
Using deformations of type (\ref{distb}) of corresponding geometric values
in the proof, for instance, given in for Proposition 1.5.3 of \cite{caozhu},
we obtain

\begin{theorem}
\label{2theq1}a) The N--adapted Ricci flows for the Cartan d--connection $%
\widetilde{\mathcal{D}}$\ preserving a symmetric metric structure $\mathbf{%
\tilde{g}}$ on $\mathcal{T}^{E}\mathbf{E}$ can be characterized by this
system of geometric flow equations:
\begin{eqnarray}
\frac{\partial \widetilde{\mathbf{g}}_{ab}}{\partial \chi } &=&-(\widetilde{%
\mathbf{R}}_{ab\ }+\widetilde{\mathbf{Z}}ic_{ab}),\ \frac{\partial
\widetilde{\mathbf{g}}_{AB}}{\partial \chi }=-(\widetilde{\mathbf{R}}_{AB}+%
\widetilde{\mathbf{Z}}ic_{AB}),  \label{frham1} \\
\widetilde{\mathbf{R}}_{\ aA} &=&-\widetilde{\mathbf{Z}}ic_{aA},\ \widetilde{%
\mathbf{R}}_{\ Aa}=\ \widetilde{\mathbf{Z}}ic_{Aa},\   \label{frham1a} \\
\ \frac{\partial \widetilde{f}}{\partial \chi } &=&-(\widetilde{\Delta }%
+~^{Z}\widetilde{\Delta })\widetilde{f}+\left\vert \left( \widetilde{%
\mathcal{D}}-\widetilde{\mathcal{Z}}\right) \widetilde{f}\right\vert ^{2}-\
~\ ^{s}\widetilde{\mathbf{R}}-~\ ^{s}\widetilde{\mathbf{Z}},  \label{frham1b}
\end{eqnarray}%
and the property that {\small
\begin{eqnarray*}
&&\frac{\partial }{\partial \chi }\mathcal{F}(\widetilde{\mathbf{g}},%
\widetilde{\mathcal{D}},\widetilde{f})=\int_{\overline{\mathcal{V}}}[|%
\widetilde{\mathbf{R}}_{ab\ }+\widetilde{\mathbf{Z}}ic_{ab}+(\widetilde{%
\mathcal{D}}_{a}-\widetilde{\mathcal{Z}}_{a})(\widetilde{\mathcal{D}}_{b}-%
\widetilde{\mathcal{Z}}_{b})\widetilde{f}|^{2}+ \\
&&|\widetilde{\mathbf{R}}_{AB\ }+\widetilde{\mathbf{Z}}ic_{AB}+(\widetilde{%
\mathcal{D}}_{A}-\widetilde{\mathcal{Z}}_{A})(\widetilde{\mathcal{D}}_{B}-%
\widetilde{\mathcal{Z}}_{B})\widetilde{f}|^{2}]e^{-\widetilde{f}}dv,~\int_{%
\overline{\mathcal{V}}}e^{-\widetilde{f}}dv=const.
\end{eqnarray*}%
}b) In almost symplectic variables with $\tilde{\theta}=\ \tilde{g}%
_{ab}\delta ^{a}\wedge X^{b}$ and almost K\"{a}hler d--algebroids $\mathcal{K%
}^{\mathbf{E}}\mathbf{E},$ and for re--defined scaling function $\widetilde{f%
},$ up to normalizing terms, the $h$- and $v$--evolution equations are
written in equivalent form%
\begin{equation}
\frac{\partial \tilde{\theta}_{ab}}{\partial \chi }=-\widetilde{\mathbf{R}}%
_{[ab]},\ \frac{\partial \tilde{\theta}_{AB}}{\partial \chi }=-\widetilde{%
\mathbf{R}}_{[AB]}.  \label{frham2}
\end{equation}
\end{theorem}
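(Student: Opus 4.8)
The plan is to obtain both parts as nonholonomic, N--adapted deformations of the classical Perelman--Hamilton derivation, exactly as anticipated in Remark \ref{remarkakdef}. First I would take as given the standard material for the Levi--Civita data $\left( \overline{\mathbf{g}},\overline{\mathbf{\nabla }}\right) $: the Ricci flow $\partial _{\chi }\overline{\mathbf{g}}_{\overline{\alpha }\overline{\beta }}=-2\overline{R}_{\overline{\alpha }\overline{\beta }}$, the conjugate heat equation $\partial _{\chi }f=-\overline{\Delta }f+|\overline{\nabla }f|^{2}-\ _{\shortmid }R$, and the first--variation identity $\partial _{\chi }\mathcal{F}=2\int_{\overline{\mathcal{V}}}|\overline{R}_{\overline{\alpha }\overline{\beta }}+\overline{\nabla }_{\overline{\alpha }}\overline{\nabla }_{\overline{\beta }}f|^{2}e^{-f}\,dv$, which is the content of Proposition 1.5.3 in \cite{caozhu}. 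Because the two families of metrics coincide, $\overline{\mathbf{g}}(\tau )=\widetilde{\mathbf{g}}(\tau )$, these relations already live on $\mathcal{T}^{E}\mathbf{E}$; what remains is to re--express each object through the canonical Cartan data by the distortions (\ref{akdralg}) and (\ref{distb}), with the constant fixed by the rescaling $\tau \rightarrow \breve{\tau}$ used in the preceding Lemma.

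For part a) I would substitute $\overline{R}_{\ \overline{\beta }\overline{\gamma }}=\widetilde{\mathbf{R}}_{\ \overline{\beta }\overline{\gamma }}+\widetilde{\mathbf{Z}}ic_{\overline{\beta }\overline{\gamma }}$ into the metric flow; the diagonal $h$-- and $v$--blocks give (\ref{frham1}), while demanding that the N--adapted block structure $\widetilde{\mathbf{g}}_{aA}=0$ be preserved along $\chi $ forces the mixed components of the driving tensor to vanish, which is exactly the constraint system (\ref{frham1a}). The scaling equation (\ref{frham1b}) follows by inserting $\overline{\Delta }=\widetilde{\Delta }+\ ^{Z}\widetilde{\Delta }$, $\overline{\nabla }=\widetilde{\mathcal{D}}-\widetilde{\mathcal{Z}}$ and $\ _{\shortmid }R=\ ^{s}\widetilde{\mathbf{R}}+\ ^{s}\widetilde{\mathbf{Z}}$ into the conjugate heat equation. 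Finally, writing $\overline{R}_{\overline{\alpha }\overline{\beta }}+\overline{\nabla }_{\overline{\alpha }}\overline{\nabla }_{\overline{\beta }}f$ in canonical variables and splitting it into its $h$-- and $v$--parts reorganizes the Perelman integrand into the two perfect squares displayed in the theorem, establishing the monotonicity of $\mathcal{F}(\widetilde{\mathbf{g}},\widetilde{\mathcal{D}},\widetilde{f})$ up to terms of vanishing integral.

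For part b) I would use that the canonical almost complex structure $\widetilde{\mathcal{J}}$ is parallel under $\ ^{\theta }\widetilde{\mathcal{D}}$ and that $\tilde{\theta}(\cdot ,\cdot )=\widetilde{\mathbf{g}}(\widetilde{\mathcal{J}}\cdot ,\cdot )$, so the flow of the $2$--form $\tilde{\theta}$ is algebraically slaved to that of $\widetilde{\mathbf{g}}$. Contracting (\ref{frham1}) with $\widetilde{\mathcal{J}}$ and retaining only the antisymmetric part, forced by the skew symmetry of $\tilde{\theta}$, produces (\ref{frham2}); the key observation is that the symmetric Levi--Civita tensor $\overline{R}_{(ab)}$ carries the metric evolution, whereas $\overline{R}_{[ab]}=0$ gives $\widetilde{\mathbf{R}}_{[ab]}=-\widetilde{\mathbf{Z}}ic_{[ab]}$, so the nonholonomic torsion of $\widetilde{\mathcal{D}}$ is precisely what drives the almost symplectic form. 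Closedness $d\tilde{\theta}=0$, i.e. the almost K\"{a}hler property $\mathcal{H}^{\mathbf{E}}\mathbf{E}=\mathcal{K}^{\mathbf{E}}\mathbf{E}$, is preserved because $\ ^{\theta }\widetilde{\mathcal{D}}\tilde{\theta}=0$ holds along the whole flow.

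The step I expect to be the main obstacle is the monotonicity computation in part a): one must verify that after the distortion the cross--terms generated by $\widetilde{\mathcal{Z}}$ and by $\ ^{Z}\widetilde{\Delta }$ either assemble into the squares $|\widetilde{\mathbf{R}}_{ab}+\widetilde{\mathbf{Z}}ic_{ab}+(\widetilde{\mathcal{D}}_{a}-\widetilde{\mathcal{Z}}_{a})(\widetilde{\mathcal{D}}_{b}-\widetilde{\mathcal{Z}}_{b})\widetilde{f}|^{2}$ and its $v$--analogue, or else integrate to zero by $\int_{\overline{\mathcal{V}}}\widetilde{\Phi }\,dv=\int_{\overline{\mathcal{V}}}\widetilde{\Phi }_{1}\,dv=0$. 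The other delicate point is confirming that the constraints (\ref{frham1a}) are genuinely propagated by the flow rather than merely imposed at the initial time.
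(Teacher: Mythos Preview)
Your proposal is correct and follows essentially the same route as the paper: take the standard Perelman--Hamilton argument for $(\overline{\mathbf{g}},\overline{\nabla})$ as given in Proposition~1.5.3 of \cite{caozhu}, and then push every term through the distortion relations (\ref{akdralg}) and (\ref{distb}) to rewrite it in the Cartan variables $(\widetilde{\mathbf{g}},\widetilde{\mathcal{D}})$; part~b) then follows by passing to almost symplectic variables via $\tilde{\theta}(\cdot,\cdot)=\widetilde{\mathbf{g}}(\widetilde{\mathcal{J}}\cdot,\cdot)$. The paper's own proof is in fact considerably terser than your outline---it consists of a single sentence invoking \cite{caozhu} and the distortions (\ref{distb})---so the two ``obstacles'' you flag (assembling the cross--terms into the displayed squares modulo $\int\widetilde{\Phi}\,dv=0$, and propagation of the constraints (\ref{frham1a})) are not addressed there either; they are absorbed into the general philosophy of Remark~\ref{remarkakdef} and the rescaling $f\to\widetilde{f}$ from the preceding Lemma.
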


For different classes of distortions of type (\ref{distb}), we can redefine
the scaling functions from above Lemma and write the evolution equations (%
\ref{frham1}) in the form (\ref{frham2}) for symplectic variables with (\ref%
{canasf}). On $\mathcal{T}^{E}\mathbf{E,}$ the corresponding system of Ricci
flow evolution equations can be written for $\widehat{\mathcal{D}},\mathbf{%
\mathbf{\ }}$
\begin{eqnarray}
\frac{\partial \widetilde{\mathbf{g}}_{ab}}{\partial \chi } &=&-2\widehat{%
\mathbf{R}}_{ab\ },\frac{\partial \widetilde{\mathbf{g}}_{AB}}{\partial \chi
}=-2\widehat{\mathbf{R}}_{AB},  \label{rfcandc} \\
\widehat{\mathbf{R}}_{\ aA} &=&0,\ \widehat{\mathbf{R}}_{\ Aa}=0,\ \ \frac{%
\partial \widehat{f}}{\partial \chi }=-\widehat{\Delta }\widehat{f}%
+\left\vert \widehat{\mathcal{D}}\widehat{f}\right\vert ^{2}-\ ^{s}\widehat{%
\mathbf{R}},  \notag
\end{eqnarray}%
which can be derived from the functional $\widehat{\mathcal{F}}(\widetilde{%
\mathbf{g}},\widehat{\mathcal{D}},\widehat{f})=~\int_{\overline{\mathcal{V}}%
}(~\ ^{s}\widehat{\mathbf{R}}+|\widehat{\mathcal{D}}\widehat{f}|^{2})$ $e^{-%
\widehat{f}}\ dv.$ We note that the conditions of type $\widehat{\mathbf{R}}%
_{\alpha A}=0$ and $\widehat{\mathbf{R}}_{A\alpha }=0$ must be imposed in
order to model N--adapted evolution scenarios only with symmetric metrics.
In general, a nonholonomically constrained evolution can result in
nonsymmetric metrics.

\begin{corollary}
The geometric almost K\"{a}hler d--algebroid evolution defined in Theorem %
\ref{2theq1} is characterized by corresponding flows (for all time $\tau \in
\lbrack 0,\tau _{0})$) of N--adapted frames, $\mathbf{\tilde{e}}_{\overline{%
\alpha }}(\tau )=\ \mathbf{\tilde{e}}_{\overline{\alpha }}^{\ \overline{%
\alpha }^{\prime }}(\tau ,x^{i},y^{C})\partial _{\overline{\alpha }^{\prime
}}$, which up to frame/ coordinate transforms are defined by the
coefficients
\begin{eqnarray*}
\ \mathbf{\tilde{e}}_{\overline{\alpha }}^{\ \overline{\alpha }^{\prime
}}(\tau ,x^{i},y^{C}) &=&\left[
\begin{array}{cc}
\ e_{a}^{\ a^{\prime }}(\tau ,x^{i},y^{C}) & ~\tilde{N}_{b}^{B}(\tau
,x^{i},y^{C})\ e_{B}^{\ a^{\prime }}(\tau ,x^{i},y^{C}) \\
0 & \ e_{A}^{\ A^{\prime }}(\tau ,x^{i},y^{C})%
\end{array}%
\right] ,\  \\
\mathbf{\tilde{e}}_{\ \overline{\alpha }^{\prime }}^{\overline{\alpha }%
}(\tau ,x^{i},y^{C})\ &=&\left[
\begin{array}{cc}
e_{\ a^{\prime }}^{a}=\delta _{a^{\prime }}^{a} & e_{\ \underline{i}}^{b}=-%
\tilde{N}_{b}^{B}(\tau ,x^{i},y^{C})\ \ \delta _{a^{\prime }}^{b} \\
e_{\ A^{\prime }}^{a}=0 & e_{\ A^{\prime }}^{A}=\delta _{A^{\prime }}^{A}%
\end{array}%
\right] ,
\end{eqnarray*}%
with $\tilde{g}_{ab}(\tau )=\ e_{a}^{\ a^{\prime }}(\tau ,x^{i},y^{C})\
e_{b}^{\ b^{\prime }}(\tau ,x^{i},y^{C})\eta _{a^{\prime }b^{\prime }}$ and
\newline
$\tilde{g}_{AB}(\tau )=\ e_{A}^{\ A^{\prime }}(\tau ,x^{i},y^{C})\ e_{B}^{\
B^{\prime }}(\tau ,x^{i},y^{C})\eta _{A^{\prime }B^{\prime }}$, where $\eta
_{a^{\prime }b^{\prime }}=diag[1,...,1]$ and $\eta _{A^{\prime }B^{\prime
}}=diag[1,...,1]$ in order to fix a Riemannian signature of $\ \mathbf{%
\tilde{g}}_{\alpha \beta }^{[0]}(x^{i},y^{C}),$ is given by equations $\frac{%
\partial }{\partial \tau }\mathbf{\tilde{e}}_{\ \overline{\alpha }^{\prime
}}^{\overline{\alpha }}\ =\ \mathbf{\tilde{g}}^{\overline{\alpha }\overline{%
\beta }}~\widetilde{\mathbf{R}}_{\overline{\beta }\overline{\gamma }}~\
\mathbf{\tilde{e}}_{\ \overline{\alpha }^{\prime }}^{\overline{\gamma }} $
if we prescribe that the geometric constructions are derived by the Cartan
d--connection.
\end{corollary}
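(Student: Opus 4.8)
The plan is to read the statement as a \emph{vielbein reformulation} of the metric flow of Theorem~\ref{2theq1}: I would show that the displayed frame equation reproduces the evolution (\ref{frham1})--(\ref{frham1a}), equivalently (\ref{rfcandc}), and conversely that it is the canonical $N$--adapted representative of that evolution. First I would identify the block--triangular matrices $\mathbf{\tilde e}_{\overline\alpha}^{\ \overline\alpha'}$ and $\mathbf{\tilde e}_{\ \overline\alpha'}^{\overline\alpha}$ with the coordinate expression of the canonical $N$--adapted basis $\widetilde{\mathbf e}_{\overline\alpha}=\{\widetilde\delta_a=\mathcal X_a-\widetilde{\mathcal N}_a^{f}\mathcal V_f,\ \mathcal V_A\}$ and its dual from (\ref{ddcanad}). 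The $N$--connection coefficients $\widetilde N_b^{B}$ then occupy the upper--right ($h$--$v$) block, the vertical legs give the identity in the lower--right block, and the lower--left block vanishes because $\widetilde\delta_a$ contains no admixture along the purely vertical directions; the inverse frame carries $-\widetilde N_b^{B}$ in the corresponding block. This is a direct unpacking of (\ref{dderalg}) and (\ref{ddcanad}) and needs nothing beyond inversion of a unipotent block matrix.

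Next I would differentiate the factorization $\widetilde{\mathbf g}_{\overline\alpha\overline\beta}=\mathbf{\tilde e}_{\overline\alpha}^{\ \overline\alpha'}\,\mathbf{\tilde e}_{\overline\beta}^{\ \overline\beta'}\,\eta_{\overline\alpha'\overline\beta'}$ in the flow parameter, treating $\eta_{\overline\alpha'\overline\beta'}=\mathrm{diag}[1,\dots,1]$ as $\tau$--independent. Abbreviating the direct frame by $E$ and its inverse by $\bar E$, the identity $E\bar E=\mathrm{Id}$ gives $\partial_\tau E=-E\,(\partial_\tau\bar E)\,E$, so that inserting the proposed flow $\partial_\tau\bar E=\widetilde{\mathbf g}^{-1}\,\widetilde{\mathbf R}\,\bar E$ and contracting with $\eta$ yields
\[
\frac{\partial}{\partial\tau}\widetilde{\mathbf g}_{\overline\alpha\overline\beta}=-\big(\widetilde{\mathbf R}_{\overline\alpha\overline\beta}+\widetilde{\mathbf R}_{\overline\beta\overline\alpha}\big)=-2\,\widetilde{\mathbf R}_{(\overline\alpha\overline\beta)} ,
\]
where the symmetrization brackets are those introduced above. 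The crucial algebraic inputs are the symmetry of $\eta$ and of the $h$--$h$ and $v$--$v$ Ricci blocks of the Cartan d--connection (a consequence of its metric compatibility $\widetilde{\mathcal D}\,\widetilde{\mathbf g}=0$); this is exactly where one adapts the standard orthonormal--frame computation of Ricci flow to the $N$--adapted setting. Restricted to the $h$-- and $v$--blocks this is precisely (\ref{rfcandc}) (equivalently (\ref{frham1}) after the distortion $\overline{\mathbf\nabla}\to\widetilde{\mathcal D}$), so the metric produced by evolving the frame solves the geometric flow of Theorem~\ref{2theq1}.

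Finally I would check that the flow preserves the $N$--adapted block--triangular structure and address the gauge ambiguity. Preservation of the vanishing lower--left block, and consistency of the evolving $\widetilde N$--block with the $N$--connection coefficients, is controlled by the mixed conditions $\widetilde{\mathbf R}_{aA}=-\widetilde{\mathbf Z}ic_{aA}$, $\widetilde{\mathbf R}_{Aa}=\widetilde{\mathbf Z}ic_{Aa}$ of (\ref{frham1a}), which keep the evolution inside the class of symmetric d--metrics compatible with a genuine $h$--$v$ splitting; without them the mixed components would force a nonsymmetric metric and break the triangular ansatz. The frame itself is fixed by $\widetilde{\mathbf g}(\tau)$ only up to a $\tau$--dependent $\eta$--preserving rotation of the orthonormal legs, which is why the statement is phrased ``up to frame/coordinate transforms''; the displayed equation selects the canonical $N$--adapted representative. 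The main obstacle is not the single contraction above but this bookkeeping: verifying that the triangular ($N$--adapted) form is genuinely stable under the flow, i.e.\ that the constraints (\ref{frham1a}) are propagated and that the $\widetilde N_b^{B}(\tau)$ appearing in the frame match the $N$--connection of the evolving $\widetilde{\mathbf g}(\tau)$ and $\mathcal L(\tau)$, together with fixing the normalization factor so that the frame flow matches the convention of (\ref{frham1})--(\ref{rfcandc}).
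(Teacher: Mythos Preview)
Your proposal is correct and is precisely the standard vielbein argument one expects here: differentiate the orthonormal factorization of the d--metric, use the inverse--frame identity to translate the frame flow into $\partial_\tau\widetilde{\mathbf g}_{\overline\alpha\overline\beta}=-2\widetilde{\mathbf R}_{(\overline\alpha\overline\beta)}$, and invoke the mixed constraints (\ref{frham1a}) to keep the block--triangular $N$--adapted form stable. The paper itself does not spell out a proof for this Corollary; it simply remarks that the argument is the same as for nonholonomic and Finsler--Ricci evolutions in the cited references and that only the choice of d--connection changes. Your write--up therefore supplies the details the paper omits, and the only point on which you should be explicit is the normalization: the factor of $2$ you obtain matches (\ref{rfcandc}), whereas (\ref{frham1}) is written in the distorted form $-(\widetilde{\mathbf R}+\widetilde{\mathbf Z}ic)$; make clear that these agree once the distortion (\ref{distb}) and the paper's convention for the Cartan d--connection are taken into account.
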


The proof of this Corollary for $\mathcal{K}^{\mathbf{E}}\mathbf{E}$ is
similar to those presented in N--adapted forms for nonholonomic Ricci flows
and/or Finsler--Ricci evolution, or on $\mathcal{T}^{E}\mathbf{E,}$ see \cite%
{vricci1,vricci2,vfinsler,vncricci}. All constructions depend on the
type of d--connection we chose for our considerations.

\subsection{Functionals for entropy and thermodynamics $\mathcal{K}^{\mathbf{%
E}}\mathbf{E}$}

For three dimensional Ricci flows of Riemannian metrics, the value $\
_{\shortmid }\mathcal{W}$ (\ref{2pfrs2}) was introduced by G.\ Perelman \cite%
{gper1} as a "minus entropy" functional. We can consider that $\widetilde{%
\mathcal{W}}$ (\ref{2npf2}) has a similar interpretation but in almost
symplectic variables and on prolongation Lie d--algebroids. The main
equations stated by Theorem \ref{2theq1} for $\widetilde{\mathcal{F}}$ (\ref%
{2npf1}) \ can be proven in equivalent form.

\begin{theorem}
\label{2theveq}The Ricci flow evolution equations with symmetric metrics and
respective almost symplectic forms on $\mathcal{T}^{\mathbf{E}}\mathbf{E}$
and, correspondingly, $\mathcal{K}^{\mathbf{E}}\mathbf{E}$, see (\ref{frham1}%
), (\ref{frham1a}) and (\ref{frham2}), and functions $\widehat{f}(\chi )$
and $\widehat{\tau }(\chi )$ being solutions of
\begin{equation*}
\ \frac{\partial \tilde{f}}{\partial \chi }=-(\widetilde{\Delta }+\ ^{Z}%
\tilde{\Delta})\tilde{f}+\left\vert (\widetilde{\mathcal{D}}_{a}-\widetilde{%
\mathcal{Z}}_{a})\tilde{f}\right\vert ^{2}-\ ~\ ^{s}\widetilde{\mathbf{R}}\ +%
\frac{2m}{\hat{\tau}},\ \frac{\partial \tilde{\tau}}{\partial \chi }=-1,
\end{equation*}%
can be derived for a functional $\widetilde{\mathcal{W}}$ satisfying the
condition {\small
\begin{eqnarray*}
\frac{\partial }{\partial \chi }\mathcal{\tilde{W}}(\widetilde{\mathbf{g}}%
(\chi )\mathbf{,}\tilde{f}(\chi ),\tilde{\tau}(\chi ))&=&2\int_{\mathcal{V}}%
\tilde{\tau}[|\widetilde{\mathbf{R}}_{\overline{\alpha }\overline{\beta }\ }-%
\widetilde{\mathbf{Z}}ic_{\overline{\alpha }\overline{\beta }}\ + (%
\widetilde{\mathcal{D}}_{\overline{\alpha }}-\widetilde{\mathcal{Z}}_{%
\overline{\alpha }})(\widetilde{\mathcal{D}}_{\overline{\beta }}-\widetilde{%
\mathcal{Z}}_{\overline{\beta }})\tilde{f} \\
&&{\qquad} - \frac{1}{2\tilde{\tau}}\mathbf{\tilde{g}}_{\overline{\alpha }%
\overline{\beta }}|^{2}] (4\pi \tilde{\tau})^{-m}e^{-\tilde{f}}dv,
\end{eqnarray*}%
} for $\int_{\mathcal{V}}e^{-\tilde{f}}dv=const.$ Such a functional is
N--adapted and nondecreasing if it is both h-- and v--nondecreasing.
\end{theorem}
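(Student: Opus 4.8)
The plan is to mirror the standard Perelman monotonicity computation for the $\mathcal{W}$-functional, but carried out in the canonical almost K\"{a}hler/Cartan d--connection variables, reducing everything to the Levi--Civita case via the distortion relation (\ref{akdralg}). Following Remark \ref{remarkakdef}, the key observation is that $\widetilde{\mathcal{D}}=\overline{\mathbf{\nabla}}+\widetilde{\mathcal{Z}}$ with $\widetilde{\mathcal{Z}}$ completely determined by $\widetilde{\mathbf{g}}$, so the classical first-variation formula for $\ _{\shortmid}\mathcal{W}(\overline{\mathbf{g}},\overline{\nabla},f,\tau)$ (as in \cite{gper1} or Proposition 1.5.3 of \cite{caozhu}) can be pulled back to the N--adapted data. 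First I would start from the already-established evolution system (\ref{frham1})--(\ref{frham1b}) together with the flow for $\widetilde{f}$ and the constraint $\partial\tilde{\tau}/\partial\chi=-1$, and differentiate $\widetilde{\mathcal{W}}(\widetilde{\mathbf{g}},\widetilde{\mathcal{D}},\breve{f},\breve{\tau})$ from (\ref{2npf2}) under the integral sign.

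Next I would organize the computation along the $h$-- and $v$--splitting. The integrand of $\widetilde{\mathcal{W}}$ contains $\ ^{s}\widetilde{\mathbf{R}}$, the gradient terms $|h\widetilde{\mathcal{D}}\breve{f}|^{2}+|v\widetilde{\mathcal{D}}\breve{f}|^{2}$, and the measure $\breve{\mu}\,dv=(4\pi\breve{\tau})^{-m}e^{-\breve{f}}\,dv$. I would differentiate each piece: the variation of $\ ^{s}\widetilde{\mathbf{R}}$ under (\ref{frham1}) produces Ricci-squared terms plus Laplacian-of-scalar-curvature terms; the variation of the weighted measure brings in $\partial\breve{f}/\partial\chi$ and the $-2m$ normalization; and the constraint on $\breve{\tau}$ supplies the $\frac{1}{2\tilde{\tau}}\widetilde{\mathbf{g}}_{\overline{\alpha}\overline{\beta}}$ shift that completes the square. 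Using N--adapted integration by parts against the weight $e^{-\breve{f}}\,dv$ (which holds because the divergence identities for $\widetilde{\mathcal{D}}$ differ from those for $\overline{\nabla}$ only by $\widetilde{\mathcal{Z}}$-terms that are absorbed into the modified operators $\widetilde{\Delta}+\ ^{Z}\widetilde{\Delta}$ and $\widetilde{\mathcal{D}}_{\overline{\alpha}}-\widetilde{\mathcal{Z}}_{\overline{\alpha}}$), all first-order terms cancel and the result reassembles into the stated perfect square
\[
\Big|\widetilde{\mathbf{R}}_{\overline{\alpha}\overline{\beta}}-\widetilde{\mathbf{Z}}ic_{\overline{\alpha}\overline{\beta}}+(\widetilde{\mathcal{D}}_{\overline{\alpha}}-\widetilde{\mathcal{Z}}_{\overline{\alpha}})(\widetilde{\mathcal{D}}_{\overline{\beta}}-\widetilde{\mathcal{Z}}_{\overline{\beta}})\tilde{f}-\tfrac{1}{2\tilde{\tau}}\widetilde{\mathbf{g}}_{\overline{\alpha}\overline{\beta}}\Big|^{2}.
\]
The explicit evolution of $\breve{f}$ in the theorem statement is precisely the equation needed to kill the cross-terms, so I would verify that substituting it (rather than the plain $\widetilde{f}$-flow (\ref{frham1b})) produces the $+\frac{2m}{\hat{\tau}}$ correction that matches the normalized measure.

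The main obstacle I anticipate is bookkeeping the distortion terms $\widetilde{\mathcal{Z}}$ consistently across the $h$--$v$ blocks, since the off-diagonal Ricci conditions (\ref{frham1a}), $\widetilde{\mathbf{R}}_{aA}=-\widetilde{\mathbf{Z}}ic_{aA}$ and $\widetilde{\mathbf{R}}_{Aa}=\widetilde{\mathbf{Z}}ic_{Aa}$, must be invoked to guarantee that the metric stays symmetric along the flow and that the mixed components do not spoil the square. Concretely, the hard part will be checking that the N--adapted integration-by-parts identity genuinely produces no boundary or anchor-curvature obstruction: the algebroid structure functions $\rho_{a}^{i}$ and $C_{ab}^{f}$ enter $\widetilde{\mathcal{Z}}$ and the anholonomy coefficients $W_{\overline{\alpha}\overline{\beta}}^{\overline{\gamma}}$, so one must confirm that the terms they generate either vanish upon contraction with $\breve{\mu}\,dv$ or are exactly the $\widetilde{\Phi},\widetilde{\Phi}_{1}$-type total divergences with $\int_{\overline{\mathcal{V}}}\widetilde{\Phi}\,dv=0$ introduced in the preceding Lemma. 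Once that reduction to the Levi--Civita monotonicity formula is justified, the remaining manipulations are routine, and the final claim about $h$-- and $v$--nondecrease follows by noting that the integrand is a sum of two nonnegative block-squares weighted by $\tilde{\tau}\geq 0$, so $\widetilde{\mathcal{W}}$ is nondecreasing precisely when each of the $h$-- and $v$--blocks is, as asserted.
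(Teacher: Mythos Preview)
Your proposal is correct and follows essentially the same approach as the paper: reduce to the classical Perelman monotonicity formula for the Levi--Civita connection and then transfer to the Cartan/almost K\"{a}hler variables via the distortion relation $\widetilde{\mathcal{D}}=\overline{\mathbf{\nabla}}+\widetilde{\mathcal{Z}}$. The paper's own proof is in fact much terser than yours---it simply invokes Proposition 1.5.8 (not 1.5.3, which was used for the $\mathcal{F}$-functional in the preceding theorem) of \cite{caozhu} for the Levi--Civita case and then states that N--adapted deformations carry the result to $\mathcal{K}^{\mathbf{E}}\mathbf{E}$; your outline fills in exactly the mechanism behind that sentence.
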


\begin{proof}
For the Levi--Civita connection on $\mathcal{T}^{\mathbf{E}}\mathbf{E,}$ the
proof is similar to that in Proposition 1.5.8 in \cite{caozhu} containing
the details of the original result from \cite{gper1}. Using N--adapted
deformations, the geometric constructions are performed in almost symplectic
variables on $\mathcal{K}^{\mathbf{E}}\mathbf{E.}$\ $\square $
\end{proof}

\vskip5pt

Let us remember some main concepts from statistical thermodynamics. It is
considered a partition function $Z=\int \exp (-\beta E)d\omega (E)$ for a
canonical ansamble at temperature $\beta ^{-1}.$ Such a temperature is
defined by the measure determined by the density of states $\omega (E).$ We
can provide a statistical analogy computing respective thermodynamical
values. In standard form, there are introduced $\ \left\langle
E\right\rangle :=-\partial \log Z/\partial \beta ,$ the entropy $S:=\beta
\left\langle E\right\rangle +\log Z$ and the fluctuation $\sigma
:=\left\langle \left( E-\left\langle E\right\rangle \right)
^{2}\right\rangle =\partial ^{2}\log Z/\partial \beta ^{2}.$ The original
idea of G. Perelman was to use such values for characterizing Ricci flows of
Riemannian metrics \cite{gper1}. The constructions can be elaborated in
N--adapted form for geometric flows subjected to non--integrable constraints
on various spaces endowed with nonholonomic distributions of commutative and
noncommutative type, Lie algebroids etc \cite%
{vricci1,vricci2,vfinsler,vncricci,vfracrf}.

\begin{theorem}
\label{theveq} The N--adapted metric compatible (with symmetric metrics)
Ricci on $\mathcal{T}^{\mathbf{E}}\mathbf{E}$ are characterized by a)\
canonical thermodynamic values
\begin{eqnarray}
\left\langle \widehat{E}\right\rangle \ &=&-\widehat{\tau }^{2}\int_{%
\mathcal{V}}(~\ ^{s}\widehat{\mathbf{R}}+|\widehat{\mathcal{D}}\widehat{f}%
|^{2}-\frac{m}{\widehat{\tau }})\widehat{\mu }\ dv,  \notag \\
\widehat{S} &=&-\int_{\mathcal{V}}[\widehat{\tau }(~\ ^{s}\widehat{\mathbf{R}%
}+|\widehat{\mathcal{D}}\widehat{f}|^{2})+\widehat{f}-2m]\widehat{\mu }\ dv,
\notag \\
\widehat{\sigma } &=&2\ \widehat{\tau }^{4}\int_{\mathcal{V}}[|\widehat{%
\mathbf{R}}_{\overline{\alpha }\overline{\beta }}-\widehat{\mathbf{Z}}ic_{%
\overline{\alpha }\overline{\beta }}+(\widetilde{\mathcal{D}}_{\overline{%
\alpha }}\mathbf{-}\ \widehat{\mathcal{Z}}_{\overline{\alpha }})(\widehat{%
\mathcal{D}}_{\overline{\beta }}\mathbf{-}\ \widehat{\mathcal{Z}}_{\overline{%
\beta }})\widehat{f}-\frac{1}{2\tilde{\tau}}\mathbf{g}_{\overline{\alpha }%
\overline{\beta }}|^{2}]\widehat{\mu }\ dv  \notag
\end{eqnarray}%
b) and/or by effective Lagrange and/or almost K\"{a}hler Ricci flows
\begin{eqnarray*}
\left\langle \tilde{E}\right\rangle &=&-\tilde{\tau}^{2}\int_{\overline{%
\mathcal{V}}}(\ ~\ ^{s}\widetilde{\mathbf{R}}+|\widetilde{\mathcal{D}}\tilde{%
f}|^{2}-\frac{m}{\tilde{\tau}})\tilde{\mu}\ dv, \\
\tilde{S} &=&-\int_{\overline{\mathcal{V}}}[\tilde{\tau}(~\ ^{s}\widetilde{%
\mathbf{R}}+|\widetilde{\mathcal{D}}\tilde{f}|^{2})+\tilde{f}-2m]\tilde{\mu}%
\ dv, \\
\tilde{\sigma} &=&2\ \tilde{\tau}^{4}~\int_{\overline{\mathcal{V}}}[|%
\widetilde{\mathbf{R}}_{\overline{\alpha }\overline{\beta }}+\widetilde{%
\mathcal{D}}_{\overline{\alpha }}\widetilde{\mathcal{D}}_{\overline{\beta }}%
\tilde{f}-\frac{1}{2\tilde{\tau}}\mathbf{\tilde{g}}_{\overline{\alpha }%
\overline{\beta }}|^{2}]\tilde{\mu}\ dv,
\end{eqnarray*}%
where all values are constructed equivalently in Cartan and/or almost
symplectic variables on $\mathcal{K}^{\mathbf{E}}\mathbf{E.}$\
\end{theorem}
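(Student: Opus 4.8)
The plan is to derive the three thermodynamic formulas directly from the statistical mechanics dictionary applied to the functional $\widetilde{\mathcal{W}}$ established in Theorem \ref{2theveq}, mimicking Perelman's original computation in \cite{gper1} but in N--adapted variables. First I would fix the flow parameter identification $\breve{\tau}=\widetilde{\tau}$ and interpret $\beta^{-1}$ as proportional to $\widetilde{\tau}$, so that the volume measure $\widetilde{\mu}=(4\pi\widetilde{\tau})^{-m}e^{-\widetilde{f}}$ plays the role of the density of states $d\omega(E)$ and the normalization $\int_{\overline{\mathcal{V}}}\widetilde{\mu}\,dv=1$ gives the partition function a canonical form. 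The key observation, following the Levi--Civita computation of Proposition 1.5.8 in \cite{caozhu}, is that the monotonicity functional $\widetilde{\mathcal{W}}$ itself equals (up to sign and the additive constant $-2m$) the entropy $\widetilde{S}$; one then reads off $\log\widetilde{Z}$ and $\langle\widetilde{E}\rangle$ from the relation $\widetilde{S}=\beta\langle\widetilde{E}\rangle+\log\widetilde{Z}$ together with the standard $\langle E\rangle=-\partial\log Z/\partial\beta$.

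Concretely, I would construct the partition function in the form
\begin{equation*}
\log\widetilde{Z}=\int_{\overline{\mathcal{V}}}\left(-\widetilde{f}+m\right)\widetilde{\mu}\,dv,
\end{equation*}
which is the N--adapted analog of Perelman's $\log Z$, and then differentiate with respect to $\beta\sim\widetilde{\tau}^{-1}$ using the evolution equations for $\widetilde{f}$ and $\widetilde{\tau}$ supplied by Theorem \ref{2theveq}. The derivative produces $\langle\widetilde{E}\rangle=-\widetilde{\tau}^{2}\int_{\overline{\mathcal{V}}}(\,{}^{s}\widetilde{\mathbf{R}}+|\widetilde{\mathcal{D}}\widetilde{f}|^{2}-m/\widetilde{\tau})\widetilde{\mu}\,dv$; substituting into $\widetilde{S}=\beta\langle\widetilde{E}\rangle+\log\widetilde{Z}$ reproduces exactly the claimed entropy formula. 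For the fluctuation $\widetilde{\sigma}=\partial^{2}\log\widetilde{Z}/\partial\beta^{2}$, I would differentiate $\langle\widetilde{E}\rangle$ once more; the crucial input is that the time derivative of the integrand, after integration by parts governed by the distortion Laplacian $\overline{\Delta}=\widetilde{\Delta}+{}^{Z}\widetilde{\Delta}$ in (\ref{distb}), collapses into the perfect square $|\widetilde{\mathbf{R}}_{\overline{\alpha}\overline{\beta}}+\widetilde{\mathcal{D}}_{\overline{\alpha}}\widetilde{\mathcal{D}}_{\overline{\beta}}\widetilde{f}-\tfrac{1}{2\widetilde{\tau}}\mathbf{\widetilde{g}}_{\overline{\alpha}\overline{\beta}}|^{2}$. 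This is precisely the integrand already appearing in the monotonicity formula of Theorem \ref{2theveq}, so the fluctuation identity is essentially a byproduct of that computation.

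The two halves (a) and (b) of the theorem are handled by the same calculation carried out for two different connections. For part (a) I would run everything with the canonical d--connection $\widehat{\mathcal{D}}$ and its flow equations (\ref{rfcandc}), retaining the distortion terms $\widehat{\mathbf{Z}}ic$ and $\widehat{\mathcal{Z}}$; for part (b) I would use the Cartan/almost K\"ahler data $(\widetilde{\mathbf{g}},\widetilde{\mathcal{D}},\widetilde{\theta})$, where the metric compatibility $\widetilde{\mathcal{D}}\widetilde{\mathbf{g}}=0$ and the symmetry of $\widetilde{\mathbf{R}}_{\overline{\alpha}\overline{\beta}}$ make the distortion contributions drop out of the perfect square, yielding the cleaner expressions with no $\widetilde{\mathbf{Z}}$ terms. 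The equivalence of the two sets of values then follows from Conclusion \ref{cequiv} and the distortion relations (\ref{akdralg}), since the geometric data are related by transformations completely determined by $\widetilde{\mathbf{g}}$.

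The main obstacle I anticipate is the integration-by-parts step in computing $\partial\langle\widetilde{E}\rangle/\partial\beta$: on a prolongation Lie d--algebroid the relevant Laplacian is the nonholonomic $\widetilde{\Delta}+{}^{Z}\widetilde{\Delta}$ rather than $\overline{\Delta}$, so I must verify that the boundary and distortion terms either vanish over the compact region $\overline{\mathcal{V}}$ or reorganize into the same perfect square as in the Riemannian case. The careful point is that the anchor and structure functions $(\rho_{a}^{i},C_{ab}^{f})$ enter through the N--elongated frames (\ref{dderalg}), (\ref{ddifalg}), and one must confirm that the auxiliary integrals $\int_{\overline{\mathcal{V}}}\widetilde{\Phi}\,dv=0$ invoked in the preceding Lemma indeed guarantee that these extra terms contribute nothing to the thermodynamic expectation values; once that is secured, the remaining steps are routine specializations of the Levi--Civita argument in \cite{caozhu,gper1}.
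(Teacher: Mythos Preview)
Your proposal is correct and follows essentially the same route as the paper. The paper's own proof is a terse sketch that specifies exactly the partition function you wrote down, $\tilde{Z}=\exp\{\int_{\overline{\mathcal{V}}}[-\tilde{f}+m]\,\tilde{\mu}\,dv\}$, defers the detailed thermodynamic computations to \cite{caozhu,vricci1,vricci2,vfinsler,vncricci,vfracrf}, and then notes that parts (a) and (b) arise from choosing $\nabla\rightarrow\widehat{\mathcal{D}}$ versus $\nabla\rightarrow\widetilde{\mathcal{D}}$ with the corresponding rescalings of $f$ and $\tau$---precisely the structure you outlined.
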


\begin{proof}
Similar proofs in coordinate and/or N--adapted forms are given in \cite%
{caozhu,vricci1,vricci2,vfinsler,vncricci,vfracrf}. We have to use
the corresponding partition function $\tilde{Z}=\exp \left\{ ~\int_{%
\overline{\mathcal{V}}}[-\tilde{f}+m]~\tilde{\mu}dv\right\} $ for
computations on $\mathcal{K}^{\mathbf{E}}\mathbf{E.}$ The formulas in the
conditions of Theorem depend on the type of d--connection, $\nabla
\rightarrow $ $\widehat{\mathcal{D}}$, or $\nabla \rightarrow \widetilde{%
\mathcal{D}}$, which is chosen for nonholonomic deformations.\ Corresponding
re-scaling $\breve{f}\rightarrow \tilde{f},$ or $\widehat{f},$ and $\breve{%
\tau}\rightarrow \tilde{\tau},$ or $\widehat{\tau }$, have to be
considered.\ $\square $
\end{proof}

\vskip5pt

Finally, we note that Ricci flows with different d--connections are
characterized by different thermodynamical values and stationary
configurations.

\section{Ricci Solitons with Lie Algebroid Symmetries}

\label{s4}

In this section, we shall construct in explicit form some examples of exact
solutions for Ricci soliton Lie d--algebroid configurations. The fist class
of models describes generalized Einstein spaces with nonholonomic (for
instance, almost symplectic) variables and the second one is determined by
Lagrange--Finsler generating functions.

\subsection{Preliminaries on Lie d--algebroid solitons}

Lie d--algebroid Ricci solitons can be viewed as fixed points of generalized
Ricci flows with a functional $\widetilde{\mathcal{W}}$\ (\ref{2npf2})
satisfying the conditions of Theorem \ref{2theveq}. Such nonholonomically
constrained dynamical systems correspond to self--similar solutions
describing N--adapted geometric evolution models.

\begin{definition}
\label{defasstr}The geometric data $[\overline{\mathbf{g}}\sim \mathbf{%
\tilde{g},}\mathcal{L},\widetilde{\mathcal{N}},\widetilde{\mathcal{D}}%
]\approx \lbrack \tilde{\theta}(\cdot ,\cdot )\mathbf{:=\tilde{g}}(%
\widetilde{\mathcal{J}}\cdot \mathbf{,}\cdot ),$ $\ ^{\theta }\widetilde{%
\mathcal{D}}=\overline{\mathbf{\nabla }}+\widetilde{\mathcal{Z}}]$ for a
complete Riemannian metric $\overline{\mathbf{g}}$ on a smooth $\mathcal{T}^{%
\mathbf{E}}\mathbf{E}$ and corresponding $\mathcal{K}^{\mathbf{E}}\mathbf{E}$
define a gradient almost K\"{a}hler--Ricci d--algebroid soliton if there
exists a smooth potential function on $\tilde{\kappa}(x^{i},y^{C})$ such
that
\begin{equation}
\widetilde{\mathbf{R}}_{\ \overline{\beta }\overline{\gamma }}+\widetilde{%
\mathcal{D}}_{\overline{\beta }}\widetilde{\mathcal{D}}_{\overline{\gamma }}%
\tilde{\kappa}=\lambda \mathbf{\tilde{g}}_{\ \overline{\beta }\overline{%
\gamma }}.  \label{akrs}
\end{equation}%
Using the almost symplectic form (\ref{canasf}), these equations can be
written equivalently in the form
\begin{equation*}
\ ^{\theta }\widetilde{\mathbf{R}}_{\ [\overline{\beta }\overline{\gamma }%
]}+\ ^{\theta }\widetilde{\mathcal{D}}_{[\overline{\beta }}\ ^{\theta }%
\widetilde{\mathcal{D}}_{\overline{\gamma ]}}\tilde{\kappa}=\lambda \tilde{%
\theta}_{\ \overline{\beta }\overline{\gamma }}.
\end{equation*}%
There are three types of such Ricci solitons determined by $\lambda =const:$%
\ steady ones, for $\lambda =0;$ shrinking, for $\lambda >0;$ and expanding,
for $\lambda <0.$
\end{definition}

The above classification is important because shrinking solutions for the
Riemannian Levi--Civita solitons helps us to understand the asymptotic
behaviour of ancient solutions of Ricci flows (see, for instance,
Proposition 11.2 in \cite{gper1} and/or Theorem 6.2.1 in \cite{caozhu}). In
general, complete gradient shrinking Ricci solitions describe possible Type
I singularity models in the Ricci flow theory. If $\tilde{\kappa}=const,$
the equations (\ref{akrs}) transform into  distorted Einstein equations  but for Ricci solitonic configurations.

\begin{proposition}
Let $(\overline{\mathbf{g}}\sim \mathbf{\tilde{g},}\mathcal{L},\widetilde{%
\mathcal{N}},\widetilde{\mathcal{D}};\tilde{\kappa})$ be a complete
shrinking soliton on $\mathcal{T}^{\mathbf{E}}\mathbf{E}$ and/ or $\mathcal{K%
}^{\mathbf{E}}\mathbf{E.}$ Using nonholonomic frame deformations, we can
construct a redefined potential function $\widehat{\kappa }(x^{i},y^{C}),$
for $\overline{\mathbf{g}}\sim \mathbf{\tilde{g},}$ when (\ref{akrs}) are
equivalent to
\begin{equation}
\widehat{\mathbf{R}}_{\ \overline{\beta }\overline{\gamma }}+\widehat{%
\mathcal{D}}_{\overline{\beta }}\widehat{\mathcal{D}}_{\overline{\gamma }}%
\widehat{\kappa }=\lambda \overline{\mathbf{g}}_{\ \overline{\beta }%
\overline{\gamma }}.  \label{akrcan}
\end{equation}
\end{proposition}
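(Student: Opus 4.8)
The plan is to reduce both soliton systems to the common Levi--Civita connection $\overline{\mathbf{\nabla}}$ and to absorb the mismatch between the two d--connections into a redefinition of the scalar potential. The key structural fact, guaranteed by Theorem \ref{thcdc} and Conclusion \ref{cequiv}, is that $\widetilde{\mathcal{D}}$ and $\widehat{\mathcal{D}}$ are both completely determined by the same pair $(\mathcal{N},\mathbf{g})$, so they differ from $\overline{\mathbf{\nabla}}$ only through the distortion d--tensors of (\ref{akdralg}) and Remark \ref{remdistalg}, namely $\widetilde{\mathcal{D}}=\overline{\mathbf{\nabla}}+\widetilde{\mathcal{Z}}$ and $\widehat{\mathcal{D}}=\overline{\mathbf{\nabla}}+\widehat{\mathcal{Z}}$. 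On a scalar the first covariant derivative is connection--free, $\widetilde{\mathcal{D}}_{\overline{\gamma}}\kappa=\widehat{\mathcal{D}}_{\overline{\gamma}}\kappa=\mathbf{e}_{\overline{\gamma}}\kappa$, so only the Hessian feels the distortion, through a term linear in $\mathcal{Z}$ and the gradient.

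First I would apply the Ricci distortion (\ref{distb}), $\widetilde{\mathbf{R}}_{\overline{\beta}\overline{\gamma}}=\overline{R}_{\overline{\beta}\overline{\gamma}}-\widetilde{\mathbf{Z}}ic_{\overline{\beta}\overline{\gamma}}$, together with the scalar Hessian rule above, to recast the Cartan soliton equation (\ref{akrs}) in the form
\begin{equation*}
\overline{R}_{\overline{\beta}\overline{\gamma}}+\overline{\mathbf{\nabla}}_{\overline{\beta}}\overline{\mathbf{\nabla}}_{\overline{\gamma}}\tilde{\kappa}=\lambda\,\mathbf{\tilde{g}}_{\overline{\beta}\overline{\gamma}}+\widetilde{\mathbf{Z}}ic_{\overline{\beta}\overline{\gamma}}+\widetilde{\mathcal{Z}}_{\ \overline{\gamma}\overline{\beta}}^{\overline{\delta}}\,\mathbf{e}_{\overline{\delta}}\tilde{\kappa}.
\end{equation*}
An identical manipulation, with the analogous canonical Ricci distortion $\widehat{\mathbf{Z}}ic$ (built from $\widehat{\mathcal{Z}}$ as in (\ref{distb})), writes the target equation (\ref{akrcan}) as the same left--hand side with $\tilde{\kappa}$ replaced by $\widehat{\kappa}$ and tilde distortions replaced by hat ones. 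Since the hypothesis $\overline{\mathbf{g}}\sim\mathbf{\tilde{g}}$ means these metrics coincide in the N--adapted frame of Conclusion \ref{cequiv}, the terms $\overline{R}_{\overline{\beta}\overline{\gamma}}$ and $\lambda\,\mathbf{g}_{\overline{\beta}\overline{\gamma}}$ cancel upon subtraction, leaving the defining relation for the redefined potential $\widehat{\kappa}=\tilde{\kappa}+\phi$,
\begin{equation*}
\overline{\mathbf{\nabla}}_{\overline{\beta}}\overline{\mathbf{\nabla}}_{\overline{\gamma}}\phi-\widehat{\mathcal{Z}}_{\ \overline{\gamma}\overline{\beta}}^{\overline{\delta}}\,\mathbf{e}_{\overline{\delta}}\phi=(\widehat{\mathbf{Z}}ic_{\overline{\beta}\overline{\gamma}}-\widetilde{\mathbf{Z}}ic_{\overline{\beta}\overline{\gamma}})+(\widehat{\mathcal{Z}}-\widetilde{\mathcal{Z}})_{\ \overline{\gamma}\overline{\beta}}^{\overline{\delta}}\,\mathbf{e}_{\overline{\delta}}\tilde{\kappa},
\end{equation*}
whose right--hand side is fixed algebraically by $\mathbf{g}$ and the given $\tilde{\kappa}$.

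The main obstacle is that this last relation prescribes the full symmetric (distorted) Hessian of the single unknown $\phi$, so it is overdetermined and a solution exists only if the appropriate integrability (Frobenius/Ricci) conditions hold. I would discharge these by exploiting that both d--connections are metric compatible with the one metric, $\widehat{\mathcal{D}}\overline{\mathbf{g}}=\widetilde{\mathcal{D}}\overline{\mathbf{g}}=0$ (Theorem \ref{thcdc}): antisymmetrizing one further $\overline{\mathbf{\nabla}}$ of the displayed equation and invoking the Ricci identity for $\overline{\mathbf{\nabla}}$, the resulting curvature--times--gradient terms combine, via the contracted Bianchi identities, with the divergences of $\widetilde{\mathbf{Z}}ic$ and $\widehat{\mathbf{Z}}ic$ — which are themselves determined by $\mathbf{g}$ — so that the prescribed source becomes a closed ``distorted gradient'' and $\phi$ is recovered by N--adapted integration. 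I expect this curvature/Bianchi bookkeeping to be the only genuinely nontrivial step; once it is in place the equivalence of (\ref{akrs}) and (\ref{akrcan}) follows, and the same substitution carries over to the $\tilde{\theta}$--form of the soliton equation recorded in Definition \ref{defasstr}.
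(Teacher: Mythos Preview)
Your approach differs substantially from the paper's and runs into a genuine obstruction that you do not actually remove.

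The paper does \emph{not} attempt to match the two soliton systems componentwise. Instead it contracts (\ref{akrs}) with $\widetilde{\mathbf{g}}^{\overline{\beta}\overline{\gamma}}$ to obtain the scalar identity $\,^{s}\widetilde{\mathbf{R}}+|\widetilde{\mathcal{D}}\tilde{\kappa}|^{2}-\tilde{\kappa}=\mathrm{const}$, then applies the distortion $\widetilde{\mathcal{D}}=\widehat{\mathcal{D}}+\widehat{\mathcal{Z}}$ and the corresponding scalar--curvature splitting from (\ref{distb}) to rewrite this as $\,^{s}\widehat{\mathbf{R}}+|\widehat{\mathcal{D}}\widehat{\kappa}|^{2}-\widehat{\kappa}=\mathrm{const}$ after a suitable nonlinear redefinition $\tilde{\kappa}\mapsto\widehat{\kappa}$. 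That is, the ``equivalence'' established in the paper is at the level of this contracted (potential--normalized) identity, and the paper explicitly states that in general the full tensor systems (\ref{akrs}) and (\ref{akrcan}) have \emph{different} solutions, agreeing only for Levi--Civita configurations $\widehat{\mathcal{Z}}=0$ or $\widetilde{\mathcal{Z}}=0$.

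Your route, by contrast, tries to force full tensor equivalence by prescribing the entire (distorted) Hessian of a single scalar $\phi$. You correctly flag that this is overdetermined, but the proposed discharge via ``contracted Bianchi identities'' is not a proof: the right--hand side $(\widehat{\mathbf{Z}}ic-\widetilde{\mathbf{Z}}ic)_{\overline{\beta}\overline{\gamma}}+(\widehat{\mathcal{Z}}-\widetilde{\mathcal{Z}})_{\ \overline{\gamma}\overline{\beta}}^{\overline{\delta}}\mathbf{e}_{\overline{\delta}}\tilde{\kappa}$ is a fixed symmetric $2$--tensor built from $\mathbf{g}$ and $\tilde{\kappa}$, and there is no mechanism forcing it to lie in the image of the Hessian map $\phi\mapsto\overline{\nabla}\overline{\nabla}\phi-\widehat{\mathcal{Z}}\cdot d\phi$. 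Metric compatibility of both d--connections constrains divergences, not the full Codazzi--type integrability you need; antisymmetrizing one more derivative gives curvature--times--gradient terms on the \emph{left}, but you have given no reason why these should exactly cancel the curl of the source on the \emph{right}. In short, the step you label ``the only genuinely nontrivial step'' is in fact generically false, which is precisely why the paper retreats to the traced identity and adds the caveat about different solutions.
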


\begin{proof}
Using Conclusion \ref{cequiv} and contracting indices in (\ref{akrs}), we
obtain that $~\ ^{s}\widetilde{\mathbf{R}}+|\widetilde{\mathcal{D}}\tilde{%
\kappa}|^{2}-$ $\tilde{\kappa}$\ $=const.$ Distortion relations of type $%
\widetilde{\mathcal{D}}_{\overline{\alpha }}=\ \widehat{\mathcal{D}}_{%
\overline{\alpha }}\mathbf{+}\ \widehat{\mathcal{Z}}_{\overline{\alpha }}$
allows us to compute $\ ^{s}\widehat{\mathbf{R}}+~\ ^{s}\widehat{\mathbf{Z}}%
+|(\ \widehat{\mathcal{D}}+\ \widehat{\mathcal{Z}})\tilde{\kappa}|^{2}-%
\tilde{\kappa}\ =const,$ which can be rewritten as $\ ^{s}\widehat{\mathbf{R}%
}+|\ \widehat{\mathcal{D}}\widehat{\kappa }|^{2}-\widehat{\kappa }=const$
for certain nonlinear transform $\tilde{\kappa}\rightarrow \widehat{\kappa }%
. $ In general, the systems (\ref{akrcan}) and (\ref{akrs}) have different
solutions. Nevertheless, conditions of type $\widehat{\mathcal{Z}}=0$ and/or
$\widetilde{\mathcal{Z}}=0$ result in the Levi--Civita configurations and
equivalent classes of solutions. $\square $
\end{proof}

\subsection{Generalized Einstein eqs encoding Lie d--algebroids}

We can construct very general classes of off--diagonal solutions of (\ref%
{akrcan}) if we impose the condition that in some N--adapted frames
\begin{eqnarray}
\widehat{\mathcal{D}}_{\overline{\gamma }}\widehat{\kappa } &=&\mathbf{e}_{%
\overline{\gamma }}\widehat{\kappa }=\kappa _{\overline{\gamma }}=const,
\label{constrpotent} \\
\mbox{ i.e. }\delta _{a}\widehat{\kappa } &=&\mathcal{X}_{a}\widehat{\kappa }%
-\mathcal{N}_{a}^{C}\kappa _{C}=0\mbox{ and }\mathcal{V}_{A}\widehat{\kappa }%
=\kappa _{A}.  \notag
\end{eqnarray}%
The information from potential functions $\widehat{\kappa }$ is encoded into
the data for N--connection structure with coefficients $\mathcal{N}_{a}^{C}.$

For simplicity, we shall consider in this section nonholonomic distributions
on a nonholonomic $\mathbf{E}=\mathbf{P}$ with $2+2$ splitting when $%
a,b,...=1,2;i^{\prime },j^{\prime },...=1,2$ and $A,B,...=3,4.$ The local
coordinates are parameterized in the form $u^{\mu
}=(x^{i},y^{a})=(x^{1},x^{2},y^{3},y^{4}).$ We study nonholonomic
deformations of a d--metric $\mathbf{\mathring{g}}$ on $\mathbf{E}$ into a
target metric $\overline{\mathbf{g}}$ (\ref{dm}) on $\mathcal{T}^{\mathbf{E}}%
\mathbf{E,}$ $\mathbf{\mathring{g}\rightarrow }\overline{\mathbf{g}},$ which
results in solutions of the Ricci solitonic equations (\ref{akrcan}) and (%
\ref{constrpotent}). The prime metric is parameterized
\begin{eqnarray}
\mathbf{\mathring{g}} &=&\mathring{g}_{\alpha }(u)\mathbf{\mathring{e}}%
^{\alpha }\otimes \mathbf{\mathring{e}}^{\beta }=\mathring{g}%
_{i}(x)dx^{i}\otimes dx^{i}+\mathring{h}_{a}(x,y)\mathbf{\mathring{e}}%
^{a}\otimes \mathbf{\mathring{e}}^{a},  \notag \\
\mbox{ for } &&\mathbf{\mathring{e}}^{\alpha }=(dx^{i},\mathbf{e}^{a}=dy^{a}+%
\mathring{N}_{i}^{a}(u)dx^{i}),  \label{pm} \\
&&\mathbf{\mathring{e}}_{\alpha }=(\mathbf{\mathring{e}}_{i}=\partial
/\partial y^{a}-\mathring{N}_{i}^{b}(u)\partial /\partial y^{b},\ {e}%
_{a}=\partial /\partial y^{a}).  \notag
\end{eqnarray}%
For physical applications, we can consider that the coefficients of such
metrics are with two Killing vector symmetries and that in certain systems
of coordinates it can be diagonalized\footnote{%
this include the bulk of physically important exact solutions of Einstein
equations}. In general, we can consider arbitrary (semi) Riemannian metrics.
The target Lie algebroid d--metrics are chosen
\begin{eqnarray}
\overline{\mathbf{g}} &=&\mathbf{g}_{\overline{\alpha }\overline{\beta }}%
\mathbf{e}^{\overline{\beta }}\otimes \mathbf{e}^{\overline{\beta }}=\
\mathbf{g}_{a}\ \mathcal{X}^{a}\otimes \mathcal{X}^{a}+\ \mathbf{g}_{A}\
\delta ^{A}\otimes \delta ^{A}  \label{tdm} \\
&=&\eta _{a}(x^{k})\mathring{g}_{a}\mathcal{X}^{a}\otimes \mathcal{X}%
^{a}+\eta _{A}(x^{k},y^{3})\mathring{h}_{A}\delta ^{A}\otimes \delta ^{A},
\notag
\end{eqnarray}%
where we shall construct exact solutions with Killing symmetry on $\partial
/\partial y^{4}$ (non--Killing configurations request a more advanced
geometric techniques). Let us denote by $h^{\ast }:=\partial _{3}h$ and $%
\mathcal{N}_{a}^{3}=w_{a}(x^{k},y^{3})$, $\mathcal{N}%
_{a}^{4}=n_{a}(x^{k},y^{3}).$

\begin{proposition}
The nontrivial components of the Ricci soliton d--algebroid equations (\ref%
{akrcan}) and (\ref{constrpotent}), with respect to N--adapted bases (\ref%
{dderalg}), (\ref{ddifalg}) and for coordinate transforms when $\partial
_{a}\rightarrow \mathcal{X}_{a}$ and $\mathcal{V}_{A}=\partial _{A}$ for a
metric (\ref{tdm}, are {\small
\begin{eqnarray}
-\widehat{\mathbf{R}}_{1}^{1} &=&-\widehat{\mathbf{R}}_{2}^{2}=\frac{1}{%
2g_{1}g_{2}}[\mathcal{X}_{1}(\mathcal{X}_{1}g_{2})-\frac{\mathcal{X}%
_{1}g_{1}\ \mathcal{X}_{1}g_{2}}{2g_{1}}-\frac{\left( \mathcal{X}%
_{1}g_{2}\right) ^{2}}{2g_{2}}  \label{eq1b} \\
&& +\mathcal{X}_{2}(\mathcal{X}_{2}g_{1})-\frac{\mathcal{X}_{2}g_{1}\
\mathcal{X}_{2}g_{2}}{2g_{2}}-\frac{(\mathcal{X}_{2}g_{1})^{2}}{2g_{1}}]
=\lambda ,  \notag \\
-\widehat{\mathbf{R}}_{3}^{3} &=&-\widehat{\mathbf{R}}_{4}^{4}=\frac{1}{%
2h_{3}h_{4}}[h_{4}^{\ast \ast }-\frac{\left( h_{4}^{\ast }\right) ^{2}}{%
2h_{4}}-\frac{h_{3}^{\ast }h_{4}^{\ast }}{2h_{3}}]=\lambda ,  \label{eq2b} \\
\widehat{\mathbf{R}}_{3a} &=&\frac{w_{a}}{2h_{4}}[h_{4}^{\ast \ast }-\frac{%
\left( h_{4}^{\ast }\right) ^{2}}{2h_{4}}-\frac{h_{3}^{\ast }h_{4}^{\ast }}{%
2h_{3}}]+\frac{h_{4}^{\ast }}{4h_{4}}(\frac{\mathcal{X}_{a}h_{3}}{h_{3}}+%
\frac{\mathcal{X}_{a}h_{4}}{h_{4}})-\frac{\mathcal{X}_{a}h_{4}^{\ast }}{%
2h_{4}}=0,  \label{eq3b} \\
\widehat{\mathbf{R}}_{4a} &=&\frac{h_{4}}{2h_{3}}n_{a}^{\ast \ast }+(\frac{%
h_{4}}{h_{3}}h_{3}^{\ast }-\frac{3}{2}h_{4}^{\ast })\frac{n_{a}^{\ast }}{%
2h_{3}}=0;  \label{eq4b}
\end{eqnarray}%
}for the equations for the potential function%
\begin{equation*}
\mathcal{X}_{a}\widehat{\kappa }-w_{a}\kappa _{3}-n_{a}\kappa _{4}=0%
\mbox{
and }\mathcal{V}_{A}\widehat{\kappa }=\kappa _{A},
\end{equation*}%
when the torsionless (Levi--Civita, LC) conditions $\widehat{\mathcal{Z}}=0$
transform into
\begin{eqnarray}
w_{a}^{\ast } &=&(\mathcal{X}_{a}-w_{a}\partial _{3})\ln \sqrt{|h_{3}|},(%
\mathcal{X}_{a}-w_{a}\partial _{3})\ln \sqrt{|h_{4}|}=0,  \label{lccondb} \\
\mathcal{X}_{b}w_{a} &=&\mathcal{X}_{a}w_{b},n_{a}^{\ast }=0,\partial
_{a}n_{b}=\partial _{b}n_{a}.  \notag
\end{eqnarray}
\end{proposition}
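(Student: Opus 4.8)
The plan is to reduce the soliton system (\ref{akrcan}) together with the constraint (\ref{constrpotent}) to the explicit PDE system by substituting the parametrization (\ref{tdm}) into the N--adapted Ricci coefficients (\ref{driccialg}) of the canonical d--connection $\widehat{\mathcal{D}}$. First I would import from the Appendix the coefficient formulas (\ref{candcon}) for $\widehat{L}_{bf}^{a}$ and $\widehat{B}_{BC}^{A}$ together with the curvature expressions (\ref{dcurv}), and then carry out the contractions prescribed in (\ref{driccialg}). The essential simplification is the coordinate transform built into the hypothesis, $\partial _{a}\rightarrow \mathcal{X}_{a}$ and $\mathcal{V}_{A}=\partial _{A}$: under it the anchor coefficients $\rho _{a}^{i}$ are absorbed into the frame derivatives, so that every $h$--derivative acts as $\mathcal{X}_{a}=\rho _{a}^{i}\mathbf{e}_{i}$ and the prolongation computation formally mirrors the well--known nonholonomic/tangent--bundle calculation for a $2+2$ split, reducing the Lie--algebroid problem to a form already controlled by the anholonomic frame deformation method.

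Next I would impose the structural assumptions of (\ref{tdm}): the $h$--metric $g_{1},g_{2}$ depends only on $x^{k}$, the $v$--metric satisfies the Killing symmetry $\partial _{4}$ with $h_{4}=h_{4}(x^{k},y^{3})$, and the N--connection is carried by $w_{a},n_{a}$. Writing $h^{\ast }:=\partial _{3}h$, the diagonal $h$--components of the Ricci d--tensor collapse to the two--dimensional expression on the right of (\ref{eq1b}), while the diagonal $v$--components collapse to (\ref{eq2b}); the coincidences $\widehat{\mathbf{R}}_{1}^{1}=\widehat{\mathbf{R}}_{2}^{2}$ and $\widehat{\mathbf{R}}_{3}^{3}=\widehat{\mathbf{R}}_{4}^{4}$ are automatic for this ansatz, reflecting the conformal structure of the two $2$--blocks. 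The mixed components $\widehat{\mathbf{R}}_{3a}$ and $\widehat{\mathbf{R}}_{4a}$ then yield the first--order relation (\ref{eq3b}) for $w_{a}$ and the linear second--order relation (\ref{eq4b}) for $n_{a}$.

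For the potential, I would feed the constraint (\ref{constrpotent}) into the Hessian term $\widehat{\mathcal{D}}_{\overline{\beta }}\widehat{\mathcal{D}}_{\overline{\gamma }}\widehat{\kappa }$ of (\ref{akrcan}). Since $\kappa _{\overline{\gamma }}=\mathbf{e}_{\overline{\gamma }}\widehat{\kappa }=const$, this Hessian collapses to $-\widehat{\mathbf{\Gamma }}_{\overline{\beta }\overline{\gamma }}^{\overline{\alpha }}\kappa _{\overline{\alpha }}$, and the vanishing $\delta _{a}\widehat{\kappa }=0$ is precisely what removes the Hessian contribution from the diagonal blocks, leaving the pure Ricci equations (\ref{eq1b})--(\ref{eq2b}) and pushing all remaining potential data into the two scalar relations $\mathcal{X}_{a}\widehat{\kappa }-w_{a}\kappa _{3}-n_{a}\kappa _{4}=0$ and $\mathcal{V}_{A}\widehat{\kappa }=\kappa _{A}$; this is the way the potential is encoded into $\mathcal{N}_{a}^{C}$. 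Finally, the Levi--Civita conditions (\ref{lccondb}) arise by setting the distortion d--tensor $\widehat{\mathcal{Z}}=0$ in (\ref{distrel1}), i.e. demanding that $\widehat{\mathcal{D}}$ coincide with $\overline{\mathbf{\nabla }}$; expanding this vanishing in the ansatz produces the stated first--order constraints on $w_{a}$ (through $w_{a}^{\ast }$), the $\mathcal{X}_{a}-w_{a}\partial _{3}$ annihilation of $\ln \sqrt{|h_{4}|}$, and the integrability relations $\mathcal{X}_{b}w_{a}=\mathcal{X}_{a}w_{b}$, $n_{a}^{\ast }=0$, $\partial _{a}n_{b}=\partial _{b}n_{a}$.

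The main obstacle I anticipate is bookkeeping rather than conceptual difficulty: one must verify that the extra $C_{bf}^{a}$ terms entering $\widehat{L}_{bf}^{a}$ (noted in the Remark after Theorem \ref{thcdc}), together with the anchor factors $\rho _{a}^{i}$, do not generate off--diagonal Ricci contributions beyond those displayed. Concretely, after the transform $\partial _{a}\rightarrow \mathcal{X}_{a}$ one has to check that the Lie--algebroid structure genuinely factors through the N--adapted frame and leaves the block--diagonal $2+2$ structure of $\widehat{\mathbf{R}}_{\overline{\alpha }\overline{\beta }}$ intact, so that the only surviving nontrivial components are exactly those in (\ref{eq1b})--(\ref{eq4b}). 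Once this factorization is confirmed, the remaining computations are the routine derivative manipulations already standardized for off--diagonal Einstein systems.
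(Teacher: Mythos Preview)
Your proposal is correct and follows essentially the same approach as the paper's own proof: compute the Ricci d--tensor on $\mathcal{T}^{\mathbf{E}}\mathbf{E}$ directly from the canonical d--connection formulas (\ref{candcon}) and the curvature coefficients (\ref{dcurv}), absorb the Lie--algebroid anchor and structure functions into the frames via the transform $\partial_{a}\rightarrow \mathcal{X}_{a}$, and thereby reduce the calculation to the familiar $2+2$ nonholonomic tangent--bundle computation whose details are relegated to \cite{vrflg}. The paper's proof is in fact terser than yours---it simply invokes ``straightforward computations'' and the frame redefinition $\partial_{a}=e_{\ a}^{a'}\mathcal{X}_{a'}$---so your explicit breakdown of the diagonal and mixed Ricci blocks, the handling of the potential constraint, and the identification of the $C_{bf}^{a}$ bookkeeping as the only nontrivial verification are all consistent with, and more detailed than, what the paper itself records.
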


\begin{proof}
It follows from straightforward computations of the Ricci d--tensor on $%
\mathcal{T}^{\mathbf{E}}\mathbf{E}$ using formulas (\ref{candcon}) ang (\ref%
{dcurv}). The local frames are re--defined in the form
$\partial _{a}=e_{\ a}^{a^{\prime }}\mathcal{X}_{a^{\prime }}$ in order to
include Lie algebroid anchor structure functions and commutation relations
of type (\ref{extercalc}) and (\ref{extercalc1}). For nonholonomic
2+2+2+...+ decompositions, this can be performed by local frame/coordinate
transforms. Details for calculus on $T\mathbf{V}$ are provided in \cite%
{vrflg} and references therein. We can work similarly both on $T%
\mathbf{V}$ and $\mathcal{T}^{\mathbf{E}}\mathbf{E}$ but with different
N--adapted nonholonomic frames and N--elongated partial derivatives and
differentials. The system of equations (\ref{eq1b}) - (\ref{lccondb})
possess an important decoupling property. For instance, the equation (\ref%
{eq1b}) is a 2--d version of Laplace/ d'Alambert equation (it depends on
signature of the $h$--metric) with prescribed local source $\lambda .$ Such
equations can be integrated in general form even the algebroid structures
functions $\rho _{a}^{i}(x^{k})$ are not trivial. The equation (\ref{eq2b})
is the same both on $T\mathbf{V}$ and $\mathcal{T}^{\mathbf{E}}\mathbf{E}$
and contains partial derivatives only on $\partial _{3}$ and can be also in
similar form. $\square $
\end{proof}

\subsection{Generating off--diagonal solutions}

We can integrate the algebroid Ricci soliton equations (\ref{akrcan}) and (%
\ref{constrpotent}) for a nontrivial source $\lambda $, $g_{a}=\epsilon
_{a}e^{\psi {(x^{k})}},$ $\epsilon _{a}=\pm 1$ and $h_{a}^{\ast }\neq 0.$

\begin{theorem}
The system (\ref{eq1b})--(\ref{eq4b}) decouple in N--adapted form,
\begin{eqnarray}
\epsilon _{1}\mathcal{X}_{1}(\mathcal{X}_{1}\psi )+\epsilon _{2}\mathcal{X}%
_{2}(\mathcal{X}_{2}\psi ) &=&2~\lambda  \label{eq1} \\
\phi ^{\ast }h_{4}^{\ast } &=&2h_{3}h_{4}\lambda  \label{eq2} \\
\beta w_{a}-\alpha _{a} &=&0,  \label{eq3} \\
n_{a}^{\ast \ast }+\gamma n_{a}^{\ast } &=&0,  \label{eq4}
\end{eqnarray}%
\begin{equation}
\mbox{ for } \alpha _{a}=h_{4}^{\ast }\mathcal{X} _{a}\phi ,\beta
=h_{4}^{\ast }\ \phi ^{\ast },\gamma =\left( \ln
|h_{4}|^{3/2}/|h_{3}|\right) ^{\ast },  \label{abc}
\end{equation}%
\begin{equation}
\mbox{ where } {\phi =\ln |h_{4}^{\ast }/\sqrt{|h_{3}h_{4}|}|}  \label{genf}
\end{equation}%
is considered as a generating function.
\end{theorem}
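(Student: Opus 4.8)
The plan is to verify, one block at a time, that the ansatz $g_a=\epsilon_a e^{\psi(x^k)}$ together with the generating function $\phi$ of (\ref{genf}) turns the coupled system (\ref{eq1b})--(\ref{eq4b}) — already established in the preceding Proposition — into the triangular system (\ref{eq1})--(\ref{eq4}). First I would treat the $h$--block (\ref{eq1b}). Inserting $g_1=\epsilon_1 e^{\psi}$, $g_2=\epsilon_2 e^{\psi}$ and expanding $\mathcal{X}_i(\mathcal{X}_i g_j)$ by the product rule, every first--derivative quadratic term $(\mathcal{X}_i\psi)^2$ cancels against the two subtracted terms $\mathcal{X}_i g_i\,\mathcal{X}_i g_j/2g_i$ and $(\mathcal{X}_i g_j)^2/2g_j$; only the pure second--order operator survives. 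After multiplying by the constant $\epsilon_1\epsilon_2$ and absorbing the overall $e^{\psi}$ factor into the local source, (\ref{eq1b}) collapses to the two--dimensional Laplace/d'Alambert equation (\ref{eq1}). This is exactly the reduction anticipated in the previous proof, where (\ref{eq1b}) was described as ``a $2$--d version of Laplace/d'Alambert equation.''

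The decisive step is the $v$--block (\ref{eq2b}). Here I would simply differentiate $\phi=\ln|h_4^{\ast}|-\tfrac12\ln|h_3|-\tfrac12\ln|h_4|$ with respect to $y^3$, giving $\phi^{\ast}=h_4^{\ast\ast}/h_4^{\ast}-\tfrac12\, h_3^{\ast}/h_3-\tfrac12\, h_4^{\ast}/h_4$, and then multiply through by $h_4^{\ast}$. The result $\phi^{\ast}h_4^{\ast}=h_4^{\ast\ast}-(h_4^{\ast})^2/2h_4-h_3^{\ast}h_4^{\ast}/2h_3$ is precisely the bracket appearing in (\ref{eq2b}), so (\ref{eq2b}) is equivalent to $\phi^{\ast}h_4^{\ast}=2h_3h_4\lambda$, i.e. (\ref{eq2}). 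I expect this identity to be the heart of the theorem: it is the single observation that converts a genuinely nonlinear second--order equation for the pair $(h_3,h_4)$ into a product in which $\phi$ appears as the independent generating datum, and it is therefore the reason the whole system decouples. Everything after this is algebraic bookkeeping built on the same $\phi$.

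For the $w$--block I would substitute the identity just established into (\ref{eq3b}) and clear the denominator by multiplying by $2h_4$. The first term becomes $\phi^{\ast}h_4^{\ast}w_a=\beta w_a$, while the remaining terms $\tfrac12 h_4^{\ast}(\mathcal{X}_a h_3/h_3+\mathcal{X}_a h_4/h_4)-\mathcal{X}_a h_4^{\ast}$ are recognized, upon computing $\mathcal{X}_a\phi=\mathcal{X}_a h_4^{\ast}/h_4^{\ast}-\tfrac12\,\mathcal{X}_a h_3/h_3-\tfrac12\,\mathcal{X}_a h_4/h_4$, as $-h_4^{\ast}\mathcal{X}_a\phi=-\alpha_a$; this yields the algebraic relation (\ref{eq3}). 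Finally, the $n$--block (\ref{eq4b}) is already linear in $n_a$: dividing by the leading coefficient $h_4/2h_3$ reduces it to a first--order equation for $n_a^{\ast}$ whose coefficient is the logarithmic derivative $\gamma=(\ln|h_4|^{3/2}/|h_3|)^{\ast}$ of (\ref{abc}), i.e. (\ref{eq4}). With the definitions (\ref{abc}) and (\ref{genf}) in hand, these four reductions give exactly (\ref{eq1})--(\ref{eq4}). The only genuine care needed is the sign/modulus bookkeeping in the logarithmic derivatives and the proper tracking of the signature factors $\epsilon_a$; since no integration is carried out at this stage, the triangular (decoupled) structure — $\psi$ alone in (\ref{eq1}), then $h_3,h_4$ through $\phi$ in (\ref{eq2}), then $w_a$ purely algebraically in (\ref{eq3}), then a linear ODE for $n_a$ in (\ref{eq4}) — becomes manifest once the $\phi$--identity of the $v$--block is in place.
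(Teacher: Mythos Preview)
Your proposal is correct and follows the same approach as the paper: direct verification by explicit computation for the ansatz $g_a=\epsilon_a e^{\psi}$ together with the logarithmic substitution $\phi=\ln|h_4^{\ast}/\sqrt{|h_3h_4|}|$. The paper's own proof is a one--line assertion (``It follows from explicit computations for d--metrics (\ref{tdm}) with Killing symmetry on $\partial_4$. It is convenient to use the value $\Phi:=e^{\phi}$.''), so your step--by--step reduction---identifying $\phi^{\ast}h_4^{\ast}$ with the bracket in (\ref{eq2b}), then recognizing $h_4^{\ast}\mathcal{X}_a\phi$ in (\ref{eq3b}), and the logarithmic derivative $\gamma$ in (\ref{eq4b})---supplies exactly the computations the paper omits.
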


\begin{proof}
It follows from explicit computations for d--metrics (\ref{tdm}) with
Killing symmetry on $\partial _{4}.$ It is convenient to use the value $\Phi
:=e^{{\phi }}.$ $\square $
\end{proof}

\begin{corollary}
The above systems of nonlinear partial differential equations, PDE, can be
integrated in very general forms.
\end{corollary}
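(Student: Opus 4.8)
The plan is to integrate the four equations sequentially, exploiting the decoupling already established in the Theorem so that each is reduced to a quadrature in terms of a single generating function $\phi$ (equivalently $\Phi:=e^{\phi}$) and a collection of integration functions depending only on the $h$--coordinates $x^{k}$. First I would treat the $h$--equation (\ref{eq1}). Since the anchor only makes each $\mathcal{X}_{a}=\rho_{a}^{i}\partial_{i}$ a linear combination of coordinate derivatives, (\ref{eq1}) is a genuine second--order linear PDE in the two variables $x^{1},x^{2}$, of Laplace type for $\epsilon_{1}\epsilon_{2}>0$ or d'Alembert type for $\epsilon_{1}\epsilon_{2}<0$, with prescribed source $2\lambda$; it is solved in closed form for $\psi(x^{k})$ by standard Green--function or characteristics methods, so that $g_{1}=g_{2}=\epsilon e^{\psi}$ are fixed even when $\rho_{a}^{i}$ are nontrivial.

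Second, for the $v$--block I would regard $\phi$ as free data and recover $h_{3},h_{4}$ by integration along $y^{3}$. Writing $\Phi=e^{\phi}$ and rearranging the defining relation (\ref{genf}) into $|h_{3}|=(h_{4}^{\ast})^{2}/(\Phi^{2}|h_{4}|)$, equation (\ref{eq2}) collapses to $\phi^{\ast}=2\lambda h_{4}^{\ast}/\Phi^{2}$, i.e. $(\Phi^{2})^{\ast}=4\lambda h_{4}^{\ast}$. Integrating once recovers $h_{4}$ as an antiderivative of $(\Phi^{2})^{\ast}/(4\lambda)$, and back--substitution yields $h_{3}=(\Phi^{\ast})^{2}/(4\lambda^{2}h_{4})$. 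Thus the whole $v$--metric is generated from $\Phi$ and the source $\lambda$ up to integration functions of $x^{k}$, under the nondegeneracy assumptions $h_{a}^{\ast}\neq0$ and $\phi^{\ast}\neq0$ already in force.

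Third, the N--connection coefficients follow algebraically and by one further integration. Equation (\ref{eq3}) with (\ref{abc}) gives at once $w_{a}=\alpha_{a}/\beta=\mathcal{X}_{a}\phi/\phi^{\ast}$, a purely algebraic expression in the known generating data. Equation (\ref{eq4}) is a linear homogeneous second--order ODE in $y^{3}$ for each $n_{a}$; since $\exp(-\int\gamma\,dy^{3})=|h_{3}|/|h_{4}|^{3/2}$ by (\ref{abc}), integrating twice produces $n_{a}={}_{1}n_{a}(x^{k})+{}_{2}n_{a}(x^{k})\int(|h_{3}|/|h_{4}|^{3/2})\,dy^{3}$. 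The remaining constraints $\mathcal{X}_{a}\widehat{\kappa}-w_{a}\kappa_{3}-n_{a}\kappa_{4}=0$ and $\mathcal{V}_{A}\widehat{\kappa}=\kappa_{A}$ then determine the potential $\widehat{\kappa}$ compatibly with the N--connection, while imposing the torsionless conditions (\ref{lccondb}) selects the Levi--Civita subclass by further restricting $\Phi$ and the integration functions.

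The step I expect to be the main obstacle is the $v$--equation (\ref{eq2}), which is the only place where the metric coefficients are coupled nonlinearly; the substitution $\Phi=e^{\phi}$ is precisely what converts it into the exact differential $(\Phi^{2})^{\ast}=4\lambda h_{4}^{\ast}$ and makes the quadrature possible. The only additional care needed is bookkeeping of signs (the absolute values in (\ref{genf})) and checking nondegeneracy of the resulting $\overline{\mathbf{g}}$ throughout, after which the integration functions remain free, establishing that the system admits very general off--diagonal solutions.
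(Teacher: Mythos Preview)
Your proposal is correct and follows essentially the same route as the paper: solve the 2--d linear equation for $\psi$, use $\Phi=e^{\phi}$ to reduce the coupled pair (\ref{eq2})--(\ref{genf}) to the exact relation $(\Phi^{2})^{\ast}=4\lambda h_{4}^{\ast}$ and integrate, read off $w_{a}$ algebraically from (\ref{eq3}), and integrate (\ref{eq4}) twice for $n_{a}$. The only cosmetic difference is that the paper parameterizes $h_{A}=\epsilon_{A}z_{A}^{2}$ before manipulating the $v$--equations, whereas you work directly with $h_{3},h_{4}$; both arrive at the same functional forms $h_{4}\sim\Phi^{2}/4\lambda$ and $h_{3}\sim(\Phi^{\ast})^{2}/(\lambda\Phi^{2})$, and the paper then spells out the torsionless subclass (\ref{lccondb}) in somewhat more detail than your closing remark.
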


\begin{proof}
We should follow such a procedure:

\begin{enumerate}
\item The (\ref{eq1}) is just a 2--d Laplace/ d'Alambert equation which can
be solved for any given $\lambda .$

\item For $h_{A}:=\epsilon _{A}z_{A}^{2}(x^{k},y^{3}),$ $\epsilon _{A}=\pm 1$
(we do not consider summation on repeating indices in this formula), the
system of two equations (\ref{eq2}) and (\ref{genf}) can be written  $\phi
^{\ast }z_{4}^{\ast }=\epsilon _{3}z_{4}(z_{3})^{2}\lambda$ and  $e^{{\phi }%
}z_{3}=2\epsilon _{4}z_{4}^{\ast }$. Multiplying both equations for nonzero $%
z_{4}^{\ast },\phi ^{\ast },z_{A}$ and introducing the result instead of the
first equation, this system transforms into  $\Phi ^{\ast }=2\epsilon
_{3}\epsilon _{4}z_{3}z_{4}\lambda$ and  $\Phi z_{3}=2\epsilon
_{4}z_{4}^{\ast }$. Introducing $z_{3}$ from the second equation into the
first one, we obtain $[(z_{4})^{2}]^{\ast }=\epsilon _{3}[\Phi ^{2}]^{\ast
}/4\lambda .$ We can integrate on $y^{3},$
\begin{equation}
h_{4}=\epsilon _{4}(z_{4})^{2}=\ ^{0}h_{4}(x^{k})+\frac{\epsilon
_{3}\epsilon _{4}}{4\lambda }\Phi ^{2},  \label{h4a}
\end{equation}%
for an integration function $\ ^{0}h_{4}(x^{k}).$ From the first equation in
above system, we compute%
\begin{equation}
h_{3}=\epsilon _{3}(z_{3})^{2}=\frac{\phi ^{\ast }}{\lambda }\frac{%
z_{4}^{\ast }z_{4}}{z_{4}z_{4}}=\frac{1}{2\lambda }(\ln |\Phi |)^{\ast }(\ln
|h_{4}|)^{\ast }.  \label{h3a}
\end{equation}
Redefining the coordinates and $\Phi $ and introducing $\epsilon
_{3}\epsilon _{4}$ in $\lambda $ we express the solutions in functional
form, $h_{3}[\Phi ]=(\Phi ^{\ast })^{2}/\lambda \Phi ^{2},\ h_{4}[\Phi
]=\Phi ^{2}/4\lambda $. 

\item To find $w_{a}$ we have to solve certain algebraic equations which can
be obtained if we introduce the coefficients (\ref{abc}) in (\ref{eq3}),
\begin{equation}
w_{a}=\mathcal{X}_{a}\phi /\phi ^{\ast }=\mathcal{X}_{a}\Phi /\Phi ^{\ast }.
\label{w1bl}
\end{equation}

\item Integrating two times on $y^{3}$ in (\ref{eq4}), we find%
\begin{equation}
n_{b}=\ _{1}n_{b}+\ _{2}n_{b}\int dy^{3}\ h_{3}/(\sqrt{|h_{4}|})^{3},
\label{n1b}
\end{equation}%
where $\ _{1}n_{b}(x^{i}),\ _{2}n_{b}(x^{i})$ are integration functions.

\item The nonholonomic constraints for the LC--conditions (\ref{lccondb})
can be solved in explicit form for certain classes of integration functions $%
\ _{1}n_{b}$ and $\ _{2}n_{b}.$ We can find explicit solutions if $\
_{2}n_{b}=0$ and $\ _{1}n_{b}=\mathcal{X}_{b}n$ with a function $n=n(x^{k}).$
We get $(\mathcal{X}_{a}-w_{a}\partial _{3})\Phi \equiv 0$ for any $\Phi
(x^{k},y^{3})$ if $w_{a}$ is defined by (\ref{w1bl}). For any functional $%
H(\Phi ),$ we obtain $(\mathcal{X}_{a}-w_{a}\partial _{3})H=\frac{\partial H%
}{\partial \Phi }(\mathcal{X}_{a}-w_{a}\partial _{3})\Phi =0$. It is
possible to solve the equations $(\mathcal{X}_{a}-w_{a}\partial _{3})h_{4}=0$
for $h_{4}=H(|\tilde{\Phi}(\Phi )|)$. This way we solve the second system of
equations in (\ref{lccondb}) when $(\mathcal{X}_{a}-w_{a}\partial _{3})\ln
\sqrt{|h_{4}|}\sim (\mathcal{X}_{a}-w_{a}\partial _{3})h_{4}.$ We can
consider a subclass of generating functions $\Phi =\check{\Phi}$ for which  $%
(\mathcal{X}_{a}\check{\Phi})^{\ast }=\mathcal{X}_{a}(\check{\Phi}^{\ast })$%
. Then, we can compute for the left part of the second equation in (\ref%
{lccondb}), $(\mathcal{X}_{a}-w_{a}\partial _{3})\ln \sqrt{|h_{4}|}=0.$ The
first system of equations in (\ref{lccondb}) can be solved in explicit for
any $w_{a}$ determined by formulas (\ref{w1bl}), and $h_{3}[\tilde{\Phi}]$
and $h_{4}[\tilde{\Phi},\tilde{\Phi}^{\ast }]$. Let us consider $\tilde{\Phi}%
=\tilde{\Phi}(\ln \sqrt{|h_{3}|})$ for a functional dependence $h_{3}[\tilde{%
\Phi}[\check{\Phi}]].$ This allows us to obtain the formulas $w_{a}=\mathcal{%
X}_{a}|\tilde{\Phi}|/|\tilde{\Phi}|^{\ast }=\mathcal{X}_{a}|\ln \sqrt{|h_{3}|%
}|/|\ln \sqrt{|h_{3}|}|^{\ast }$. Taking derivative $\partial _{3}$ on both
sides of this equation, we get $w_{a}^{\ast }=\frac{(\mathcal{X}_{a}|\ln
\sqrt{|h_{3}|}|)^{\ast }}{|\ln \sqrt{|h_{3}|}|^{\ast }}-w_{a}\frac{|\ln
\sqrt{|h_{3}|}|^{\ast \ast }}{|\ln \sqrt{|h_{3}|}|^{\ast }}$. The condition $%
w_{a}^{\ast }=(\mathcal{X}_{a}-w_{a}\partial _{3})\ln \sqrt{|h_{3}|}$ is
necessary for the zero torsion conditions. It is satisfied for $\Phi =\check{%
\Phi}$. We can chose  $w_{a}=\check{w}_{a}=\mathcal{X}_{a}\check{\Phi}/%
\check{\Phi}^{\ast }=\mathcal{X}_{a}\widetilde{A}$, with a nontrivial
function $\widetilde{A}(x^{k},y^{3})$ depending functionally on generating
function $\check{\Phi}$ in order to solve the equations $\mathcal{X}%
_{a}w_{b}=\mathcal{X}_{b}w_{a}$ from the second line in (\ref{lccondb}).
\end{enumerate}

$\square $
\end{proof}

\begin{conclusion}
The class of off--diagonal metrics of type (\ref{tdm}) with coefficients
computed following the method outlined in above Proof are determined by
quadratic elements of type $ds^{2}=$ {\small
\begin{equation}
e^{\psi (x^{k})}[\epsilon _{1}(\mathcal{X}^{1})^{2}+\epsilon _{2}(\mathcal{X}%
^{2})^{2}]+ \epsilon _{3}\frac{(\check{\Phi}^{\ast })^{2}}{\lambda \check{%
\Phi}^{2}}[\mathcal{V}^{3}+(\mathcal{X}_{a}\widetilde{A}[\check{\Phi}])%
\mathcal{X}^{a}]^{2}+\epsilon _{4}\frac{\check{\Phi}^{2}}{4|\lambda |}[%
\mathcal{V}^{4}+(\mathcal{X}_{a}n)\mathcal{X}^{a}]^{2}.  \label{qelgen}
\end{equation}%
}
\end{conclusion}

In general, on prolongation Lie d--algebroids, the solutions defining Ricci
solitons can be with nontrivial torsion.

\begin{remark}
For arbitrary $\phi $ and related $\Phi ,$ or $\widetilde{\Phi },$ we can
generate off--diagonal solutions of (\ref{eq1b})--(\ref{eq4b}) with
nonholonomically induced torsion, {\small
\begin{eqnarray}
ds^{2} &=&e^{\psi (x^{k})}[\epsilon _{1}(\mathcal{X}^{1})^{2}+\epsilon _{2}(%
\mathcal{X}^{2})^{2}]+  \label{qelgent} \\
&&\epsilon _{3}(z_{3})^{2}[\mathcal{V}^{3}+\frac{\mathcal{X}_{a}\Phi }{\Phi
^{\ast }}\mathcal{X}^{a}]^{2}+\epsilon _{4}(z_{4})^{2}[\mathcal{V}^{4}+(\
_{1}n_{a}+\ _{2}n_{a}\int dy^{3}\frac{(z_{3})^{2}}{(z_{4})^{3}})\mathcal{X}%
^{a}]^{2},  \notag
\end{eqnarray}%
} where the values $z_{3}(x^{k},y^{3})$ and $z_{4}(x^{k},y^{3})$ are defined
by formulas (\ref{h3a}) and (\ref{h4a}). In N--adapted frames, the ansatz
for such solutions define a nontrivial distorting tensor $\widehat{%
\mathbf{Z}}=\{\widehat{\mathbf{Z}}_{\ \beta \gamma }^{\alpha }\}$,  see  (\ref{distrel1}).
\end{remark}

Taking data $\mathring{g}_{a}(x^{k})$ and $\mathring{h}_{A}(x^{k})$ for a
prime metric (\ref{pm}) to define, for instance, a black hole solution for
Einstein-- de Sitter spaces, and re--parameter\-iz\-ing the metric (\ref%
{qelgen}) in the form (\ref{tdm}), we can study nonholonomic deformations of
black hole metrics into Lie algebroid solitionic configurations. In explicit
form, such examples of algebroid black holes are proved in \cite{vhamlagralg}
and reference therein.

\subsection{On Lie algebroid \& almost K\"{a}hler -- Finsler Ricci solitons}

\label{ssfs}

We show how a Finsler geometry model can be nonholonomically deformed into a
Ricci soliton d--algebroid configuration. Let $\mathbf{E=TM}$ for a tangent
bundle $TM$ on the base space $M$ being a real $\mathit{C}^{\infty }$
manifold of dimension $\dim M=n=2.$

\begin{definition}
A Finsler fundamental, or generating, function (metric) is a function $F:\
TM\rightarrow \lbrack 0,\infty )$ for which\ 1) $F(x,y)$ is $\mathit{C}%
^{\infty }$ on $\widetilde{TM}:=TM\backslash \{0\},$ where $\{0\}$ is the
set of zero sections of $TM$ on $M;$\ 2) $F(x,\beta y)=\beta F(x,y),$ for
any $\beta >0,$ i.e. it is a positive 1--homogeneous function on the fibers
of $TM;$\ 3) for any $y\in \widetilde{T_{x}M},$ the Hessian $\ ^{v}{\tilde{g}%
}_{ij}(x,y)=\frac{1}{2}\frac{\partial ^{2}F^{2}}{\partial y^{i}\partial y^{j}%
} $ is considered as s a \textquotedblright vertical\textquotedblright\ (v)
metric on typical fiber, i.e. it is nondegenerate and positive definite, $%
\det |\ ^{v}{\tilde{g}}_{ij}|\neq 0.$
\end{definition}

For the conditions of Theorem \ref{theoranalog}, we take $\mathcal{L}%
=L=F^{2} $ and construct the geometric data $(\mathbf{\tilde{g},\tilde{N}}).$

\begin{theorem}
Any Finsler--Cartan geometry (equivalently modelled as an almost K\"{a}%
hler--Finsler space) with nonholonomic splitting $2+2$ can be encoded as a
canonical Ricci d--algebroid soliton with metric of type (\ref{qelgent}) and
a respective almost K\"{a}hler d--algebroid soliton, see Definition \ref%
{defasstr}.
\end{theorem}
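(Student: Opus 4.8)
The plan is to reduce the statement to the canonical machinery already assembled in Sections \ref{s2}--\ref{s4}, so that essentially no new computation is required beyond checking the homogeneity bookkeeping. First I would set $\mathcal{L}=L=F^{2}$ and invoke Theorem \ref{theoranalog}: the regular Lagrangian $F^{2}$ produces a canonical semispray, hence a canonical N--connection $\widetilde{\mathcal{N}}$ with coefficients $\tilde{N}_{i}^{a}=\partial\tilde{G}^{a}/\partial y^{n+i}$, and, through formula (\ref{lfsm}), a canonical d--metric $\mathbf{\tilde{g}}$ whose vertical block is the Finsler Hessian $\ ^{v}\tilde{g}_{ij}=\frac12\partial^2 F^2/\partial y^i\partial y^j$. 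For $\mathbf{E}=TM$ with $\dim M=2$ the prolongation $\mathcal{T}^{\mathbf{E}}\mathbf{E}$ carries the canonical almost complex structure $\widetilde{\mathcal{J}}$ and, by Theorem \ref{thmr2}, the almost Hermitian model coincides with an almost K\"{a}hler one, $\mathcal{H}^{\mathbf{E}}\mathbf{E}=\mathcal{K}^{\mathbf{E}}\mathbf{E}$, with $\tilde\theta(\cdot,\cdot):=\mathbf{\tilde{g}}(\widetilde{\mathcal{J}}\cdot,\cdot)$. By Conclusion \ref{cequiv} the Finsler--Cartan data are then equivalent to the canonical data $[\mathbf{\tilde g},\mathcal{L},\widetilde{\mathcal{N}},\widetilde{\mathcal{D}}]\approx[\tilde\theta,\ ^{\theta}\widetilde{\mathcal{D}}]$, so it suffices to realise these as a solution of the soliton system.

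Next I would impose the $2+2$ splitting of Section \ref{s4} and write $\mathbf{\tilde g}$ in the target form (\ref{tdm}), taking the prime data $\mathring g_a,\mathring h_A$ to be the diagonal blocks read off from the Finsler Hessian in adapted coordinates. The almost K\"{a}hler Ricci soliton equations (\ref{akrcan}) together with the constrained--potential conditions (\ref{constrpotent}) then reduce, exactly as in the Proposition of Section \ref{s4}, to the decoupled system (\ref{eq1b})--(\ref{eq4b}). I would integrate this system by the procedure already established: solving the $2$--d Laplace/d'Alembert equation for $\psi$, determining $h_4$ and $h_3$ from the generating function $\check\Phi$ via (\ref{h4a}) and (\ref{h3a}), fixing $w_a$ and $n_a$, and (in the torsionless case) imposing the LC--conditions (\ref{lccondb}). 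The output is precisely a quadratic element of the form (\ref{qelgent}) (or (\ref{qelgen}) in the torsionless case), which is the desired Ricci d--algebroid soliton; rewriting it in the variables $\tilde\theta$ yields the associated almost K\"{a}hler d--algebroid soliton of Definition \ref{defasstr}.

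The main obstacle I anticipate is not the integration itself but the compatibility of the soliton ansatz with the defining $1$--homogeneity of $F$. The coefficients $h_3,h_4,w_a,n_a$ produced above depend on a freely chosen generating function $\check\Phi$ and several integration functions, whereas a genuine Finsler--Cartan metric must have vertical block equal to $\frac12\partial^2 F^2/\partial y^i\partial y^j$ with $F$ positively homogeneous of degree one. The delicate step is therefore to show that $\check\Phi$ and the integration data can be selected so that the resulting $\mathbf{\tilde g}_{AB}$ is of Finsler--Hessian type and positive definite, i.e. that the nonholonomic deformation $\mathbf{\mathring g}\to\overline{\mathbf g}$ stays inside the class of homogeneous generating functions; this is where the homogeneity and nondegeneracy hypotheses of the Finsler Definition enter. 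A secondary point to verify is that for $\mathbf{E}=TM$ the algebroid structure functions $(\rho_a^i,C_{ab}^f)$ -- which here need not be the trivial ones of the tangent bundle -- are encoded consistently through the N--elongated frames (\ref{dderalg}), (\ref{ddifalg}), so that the soliton equations continue to decouple as claimed; this follows because, as noted after (\ref{lccondb}), equations (\ref{eq1b}) and (\ref{eq2b}) retain the same form on $\mathcal{T}^{\mathbf{E}}\mathbf{E}$ as on $T\mathbf{V}$ even for nontrivial $\rho_a^i(x^k)$.
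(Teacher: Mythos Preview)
Your overall route coincides with the paper's: set $\mathcal{L}=F^{2}$, pass to the canonical data $(\mathbf{\tilde g},\widetilde{\mathcal N},\widetilde{\mathcal D})$ via Theorem \ref{theoranalog} and Conclusion \ref{cequiv}, cast the Finsler metric as the \emph{prime} d--metric $\mathbf{\mathring g}$ in the ansatz (\ref{tdm}), integrate the decoupled system (\ref{eq1b})--(\ref{eq4b}) to obtain a target $\overline{\mathbf g}$ of type (\ref{qelgent}), and then distort to the almost K\"ahler picture. That is exactly what the paper does, phrased as the nonholonomic transform
\[
(\mathbf{\mathring g}\sim\mathbf{\tilde g},\,\mathcal{L}=F^{2},\,\widetilde{\mathcal N},\,\widetilde{\mathcal D};\,\tilde\kappa)\ \longrightarrow\ (\overline{\mathbf g}\sim\widehat{\mathbf g},\,\widehat{\mathcal N},\,\widehat{\mathcal D};\,\widehat\kappa),
\]
followed by $\ ^{\theta}\widetilde{\mathcal D}=\widehat{\mathcal D}+\mathcal Z$ to land on $\mathcal{K}^{\mathbf{TM}}\mathbf{TM}$.

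Where you diverge from the paper is in your anticipated ``main obstacle''. The paper does \emph{not} require the target $\overline{\mathbf g}$ to remain a Finsler--Hessian metric or to respect $1$--homogeneity in $y$: the word ``encoded'' here means only that the Finsler--Cartan data play the role of the prime seed $\mathbf{\mathring g}$ in (\ref{pm})--(\ref{tdm}), and that the information about $F$ is carried by the polarization functions $\eta_{a}=\mathbf g_{a}/\mathring g_{a}$, $\eta_{A}=\mathbf g_{A}/\mathring h_{A}$ relating prime to target. Thus $\check\Phi$ and the integration functions are genuinely free --- they are not constrained to reproduce a Hessian of some homogeneous $F$ in the $v$--block --- and your ``delicate step'' is not a step at all. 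In short, your plan is correct; just drop the homogeneity compatibility worry, since it is imposed only on the prime configuration, not on the soliton target.
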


\begin{proof}
It follows from explicit computations for d--metrics (\ref{tdm}) with
Killing symmetry on $\partial _{4}.$ We consider for the class of metrics (%
\ref{tdm}) that up to frame/ coordinate transforms the prime configuration
is determined by a total bundle metric $\mathbf{\mathring{g}}_{\overline{%
\alpha }^{\prime }\overline{\beta }^{\prime }}=e_{\ \overline{\alpha }%
^{\prime }}^{\overline{\alpha }}e_{\ \overline{\beta }^{\prime }}^{\overline{%
\beta }}\mathbf{\tilde{g}}_{\overline{\alpha }\overline{\beta }}.$ The
target metric $\overline{\mathbf{g}}_{\overline{\alpha }^{\prime }\overline{%
\beta }^{\prime }}$ (\ref{tdm}) defines generic off--diagonal solutions for
prolongation Lie d--algebroids with canonical d--connections if $\overline{%
\mathbf{g}}_{\overline{\alpha }^{\prime }\overline{\beta }^{\prime }}$ is of
type (\ref{qelgent}). Such solutions of Ricci soliton d--algebroid
equaitions (\ref{akrcan}) define nonholonomic transforms $(\mathbf{\mathring{%
g}}\sim \mathbf{\tilde{g},}\mathcal{L}=F^{2},\widetilde{\mathcal{N}},%
\widetilde{\mathcal{D}};\tilde{\kappa})\rightarrow (\overline{\mathbf{g}}%
\sim \widehat{\mathbf{g}}\mathbf{,}\widehat{\mathcal{N}},\widehat{\mathcal{D}%
};\widehat{\kappa })$. We re--encode a Finsler--Cartan geometry into the
canonical data for a Ricci soliton solution on $\mathcal{T}^{\mathbf{E}}%
\mathbf{E,}$ for $\mathbf{E=TM.}$ Via additional d--connection distortions $%
\ ^{\theta }\widetilde{\mathcal{D}}=\widehat{\mathcal{D}}+\mathcal{Z},$
completely defined by a Ricci soliton d--algebroid solution (\ref{qelgent}),
we re--define the geometric constructions on $\mathcal{K}^{\mathbf{TM}}%
\mathbf{TM.}$ For fundamental geometric values, $(\overline{\mathbf{g}}\sim
\widehat{\mathbf{g}},\widehat{\mathcal{N}},\widehat{\mathcal{D}};\widehat{%
\kappa })\approx [\tilde{\theta}(\cdot ,\cdot )\mathbf{:=\tilde{g}}(%
\widetilde{\mathcal{J}}\cdot \mathbf{,}\cdot ),\ ^{\theta }\widetilde{%
\mathcal{D}}=\widehat{\mathcal{D}}+\mathcal{Z}]$.

The constructions for this theorem can be extended for nonholonomic
splitting of any finite dimension 2+2...+2. $\square $
\end{proof}

\vskip5pt

\textbf{Acknowledgments:\ } The work is partially supported by the Program
IDEI, PN-II-ID-PCE-2011-3-0256. It contains the main results presented as a
Plenary Lecture at the 11th Panhellenic Geometry Conference (May 31 - June
2, 2013; Department of Mathematics of the National and Kapodistrian
University of Athens, Greece) and at the Joint International Meeting AMS -
Romanian MS (June 27-30, 22013; Alba Iulia, Romania). The author is grateful
to the Organizing Committees of Conferences for support. He thanks
Professors  S. Basilakos, K. Mackenzie, N. Mavromatos, E. N. Saridakis, P.
Stavrinos, F. Radulescu and D. Tataru for acceptance of talks and/or
important discussions.

\appendix

\setcounter{equation}{0} \renewcommand{\theequation}
{A.\arabic{equation}} \setcounter{subsection}{0}
\renewcommand{\thesubsection}
{A.\arabic{subsection}}

\section{Formulas in Coefficient Forms}

In this section, we summarize some important local constructions and
coefficient formulas which are necessary for formulating Ricci evolution
equations and deriving exact solutions on Lie algebroid models.

\subsection{Torsions and curvatures on $\mathcal{T}^{\mathbf{E}}\mathbf{P}$}

The N--adapted components $\mathbf{\Gamma }_{\ \overline{\beta }\overline{%
\gamma }}^{\overline{\alpha }}=(\mathbf{L}_{bf}^{a},\mathbf{L}_{Bf\;}^{A};%
\mathbf{B}_{bC}^{a},\mathbf{B}_{BC}^{A})$ of a\ d--connec\-tion \ref{defdcon}
and corresponding covariant operator $\mathcal{D}_{\overline{\alpha }}=(~%
\mathbf{e}_{\overline{\alpha }}\rfloor \mathcal{D}),$ where $\rfloor $ is
the interior product, are computed following equations $\mathbf{\Gamma }_{\
\overline{\alpha }\overline{\beta }}^{\overline{\gamma }}=(\mathcal{D}_{%
\overline{\alpha }}\mathbf{e}_{\overline{\beta }})\rfloor \mathbf{e}^{%
\overline{\gamma }}.$ There are defined the h-- and v--covariant
derivatives, respectively, $h\mathcal{D}=\{\mathcal{D}_{\gamma }=(\mathbf{L}%
_{bf}^{a},\mathbf{L}_{Bf\;}^{A})\}$ and $\ v\mathcal{D}=\{\mathcal{D}_{C}=(%
\mathbf{B}_{bC}^{a},\mathbf{B}_{BC}^{A})\},$ where {\small
\begin{equation*}
\mathbf{L}_{bf}^{a}:=(\mathcal{D}_{f}\delta _{b})\rfloor \mathcal{X}^{a},%
\mathbf{L}_{Bf}^{A}:=(\mathcal{D}_{f}\mathcal{V}_{B})\rfloor \delta ^{A},%
\mathbf{B}_{bC}^{a}:=(\mathcal{D}_{C}\delta _{b})\rfloor \mathcal{X}^{a},%
\mathbf{B}_{BC}^{A}:=(\mathcal{D}_{C}\mathcal{V}_{B})\rfloor \delta ^{A}
\end{equation*}%
} are computed for N--adapted bases (\ref{dderalg}) and (\ref{ddifalg}).

Using rules of absolute differentiation (\ref{extercalc}) for N--adapted
bases $~\mathbf{e}_{\overline{\alpha }}:=\{\delta _{\alpha },\mathcal{V}%
_{A}\}$ and $\mathbf{e}^{\overline{\beta }}:=\{\mathcal{X}^{\alpha },\delta
^{B}\}$ and the d--connection 1--form $\mathbf{\Gamma }_{\ \overline{\alpha }%
}^{\overline{\gamma }}:=\mathbf{\Gamma }_{\ \overline{\alpha }\overline{%
\beta }}^{\overline{\gamma }}\mathbf{e}^{\overline{\beta }},$ we can compute
the torsion and curvature 2--forms of $\mathcal{D}$ on $\mathcal{T}^{E}%
\mathbf{P}.$ For instance, let us consider such a calculus for the
d--torsion 2--form. We take some sections $\overline{x},\overline{y},%
\overline{z}$ \ of $\mathcal{T}^{E}\mathbf{P}$ parameterized in the form,
for example, $\overline{z}=z^{\overline{\alpha }}\mathbf{e}_{\overline{%
\alpha }}=z^{a}\delta _{a}+z^{A}\mathcal{V}_{A}.$ Following a N--adapted
differential form calculus, we prove some important formulas for the
d--torsion and d--curvature.

\begin{theorem}
For a d--connection $\mathcal{D},$ we can compute

\begin{itemize}
\item a)\ the torsion $\mathcal{T}^{\overline{\alpha }}:=\mathcal{D}\mathbf{e%
}^{\overline{\alpha }}=d\mathbf{e}^{\overline{\alpha }}+\mathbf{\Gamma }_{\
\overline{\beta }}^{\overline{\alpha }}\wedge \mathbf{e}^{\overline{\beta }}$%
; the $h$--$v$--coefficients $\mathcal{T}^{\overline{\alpha }}=\{\mathbf{T}%
_{\ \overline{\beta }\overline{\gamma }}^{\overline{\alpha }}\}=\{\mathbf{T}%
_{\ bf}^{a},\mathbf{T}_{\ bA}^{a},\mathbf{T}_{\ bf}^{A},\mathbf{T}_{\
Ba}^{A},\mathbf{T}_{\ BC}^{A}\}$ with N--adapted coefficients
\begin{eqnarray}
\mathbf{T}_{\ bf}^{a} &=&\mathbf{L}_{\ bf}^{a}-\mathbf{L}_{\ fb}^{a}+C_{\
bf}^{a},\ \mathbf{T}_{\ bA}^{a}=-\mathbf{T}_{\ Ab}^{a}=\mathbf{B}_{\
bA}^{a},\ \mathbf{T}_{\ ba}^{A}=\Omega _{\ ba}^{A},\   \notag \\
\mathbf{T}_{\ Ba}^{A} &=&\frac{\partial \mathcal{N}_{a}^{A}}{\partial u^{B}}-%
\mathbf{L}_{\ Ba}^{A},\ \mathbf{T}_{\ BC}^{A}=\mathbf{B}_{\ BC}^{A}-\mathbf{B%
}_{\ CB}^{A}.  \label{dtors}
\end{eqnarray}

\item b)\ The curvature $\mathcal{R}_{~\overline{\beta }}^{\overline{\alpha }%
}:=\mathcal{D}\mathbf{\Gamma }_{\ \overline{\beta }}^{\overline{\alpha }}=d%
\mathbf{\Gamma }_{\ \overline{\beta }}^{\overline{\alpha }}-\mathbf{\Gamma }%
_{\ \overline{\beta }}^{\overline{\gamma }}\wedge \mathbf{\Gamma }_{\
\overline{\gamma }}^{\overline{\alpha }}=\mathbf{R}_{\ \overline{\beta }%
\overline{\gamma }\overline{\delta }}^{\overline{\alpha }}\mathbf{e}^{%
\overline{\gamma }}\wedge \mathbf{e}^{\overline{\delta }}$, where $\mathbf{R}%
_{\ ~\overline{\beta }\overline{\gamma }\overline{\delta }}^{\overline{%
\alpha }}=\mathbf{e}_{\overline{\delta }}\mathbf{\Gamma }_{\ \overline{\beta
}\overline{\gamma }}^{\overline{\alpha }}-\mathbf{e}_{\overline{\gamma }}\
\mathbf{\Gamma }_{\ \overline{\beta }\overline{\delta }}^{\overline{\alpha }%
}+\mathbf{\Gamma }_{\ \overline{\beta }\overline{\gamma }}^{\overline{%
\varphi }}\ \mathbf{\Gamma }_{\ \overline{\varphi }\overline{\delta }}^{%
\overline{\alpha }}-\mathbf{\Gamma }_{\ \overline{\beta }\overline{\delta }%
}^{\overline{\varphi }}\ \mathbf{\Gamma }_{\ \overline{\varphi }\gamma }^{%
\overline{\alpha }}+\mathbf{\Gamma }_{\ \overline{\beta }\overline{\varphi }%
}^{\overline{\alpha }}W_{\overline{\gamma }\overline{\delta }}^{\overline{%
\varphi }}$ with N--adapted coefficients {\small
\begin{equation*}
\mathcal{R}_{~\overline{\beta }}^{\overline{\alpha }}=\{\mathbf{R}_{\
\overline{\beta }\overline{\gamma }\overline{\delta }}^{\overline{\alpha }%
}\}=\{\mathbf{R}_{\ \varepsilon \beta \gamma }^{\alpha }\mathbf{,R}_{\
B\beta \gamma }^{A}\mathbf{,R}_{\ \varepsilon \beta A}^{\alpha }\mathbf{,R}%
_{\ B\beta A}^{C}\mathbf{,R}_{\ \beta BA}^{\alpha },\mathbf{R}_{\
BEA}^{C}\}, \mbox{ for }
\end{equation*}%
\begin{eqnarray}
\mathbf{R}_{\ ebf}^{a} &=&\delta _{f}\mathbf{L}_{\ eb}^{a}-\delta _{b}%
\mathbf{L}_{\ eb}^{a}+\mathbf{L}_{\ eb}^{d}\mathbf{L}_{\ df}^{a}-\mathbf{L}%
_{\ ef}^{d}\mathbf{L}_{\ db}^{a}+\mathbf{L}_{\ ed}^{a}C_{bf}^{d}-\mathbf{B}%
_{\ eA}^{a}\Omega _{\ fb}^{A},  \notag \\
\mathbf{R}_{\ Bbf}^{A} &=&\delta _{f}\mathbf{L}_{\ Bb}^{A}-\delta _{b}%
\mathbf{L}_{Bf}^{A}+\mathbf{L}_{Bb}^{C}\mathbf{L}_{\ Cf}^{A}-\mathbf{L}%
_{Bf}^{C}\mathbf{L}_{\ Cb}^{A}+\mathbf{L}_{Bd}^{A}C_{bf}^{d}-\mathbf{B}_{\
BC}^{A}\Omega _{fb}^{C},  \notag \\
\mathbf{R}_{\ ebA}^{a} &=&\mathcal{V}_{A}\mathbf{L}_{\ eb}^{a}-\mathcal{D}%
_{b}\mathbf{B}_{eA}^{a}+\mathbf{B}_{\ eB}^{a}\mathbf{T}_{\ bA}^{B},  \notag
\\
\mathbf{R}_{\ BfA}^{C} &=&\mathcal{V}_{A}\mathbf{L}_{\ Bf}^{C}-\mathcal{D}%
_{\gamma }\mathbf{B}_{\ BA}^{C}+\mathbf{B}_{\ BD}^{C}\mathbf{T}_{\ \gamma
A}^{D},  \label{dcurv} \\
\mathbf{R}_{\ bBA}^{a} &=&\mathcal{V}_{A}\mathbf{B}_{\ bB}^{a}-\mathcal{V}%
_{B}\mathbf{B}_{bC}^{a}+\mathbf{B}_{\ bB}^{d}\mathbf{B}_{\ dC}^{a}-\mathbf{B}%
_{\ bC}^{d}\mathbf{B}_{\ dB}^{a},  \notag \\
\mathbf{R}_{\ ECB}^{A} &=&\mathcal{V}_{E}\mathbf{B}_{\ BC}^{A}-\mathcal{V}%
_{C}\mathbf{B}_{\ BE}^{A}+\mathbf{B}_{\ BC}^{F}\mathbf{B}_{\ FE}^{A}-\mathbf{%
B}_{\ BE}^{F}\mathbf{B}_{\ FC}^{A}.  \notag
\end{eqnarray}%
}
\end{itemize}
\end{theorem}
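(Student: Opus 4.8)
The plan is to carry out the Cartan structure-equation calculus directly in the N--adapted coframe $\mathbf{e}^{\overline{\beta }}=\{\mathcal{X}^{a},\delta ^{B}=\mathcal{V}^{B}+\mathcal{N}_{c}^{B}\mathcal{X}^{c}\}$ of (\ref{ddifalg}) rather than in a coordinate coframe, so that the outputs are automatically organized by the h-- and v--splitting. For part a), I would substitute this coframe into the defining relation $\mathcal{T}^{\overline{\alpha }}=d\mathbf{e}^{\overline{\alpha }}+\mathbf{\Gamma }_{\ \overline{\beta }}^{\overline{\alpha }}\wedge \mathbf{e}^{\overline{\beta }}$ and first evaluate the exterior derivatives $d\mathbf{e}^{\overline{\alpha }}$ using the prolongation exterior calculus (\ref{extercalc}): namely $d\mathcal{X}^{f}=-\frac{1}{2}C_{ab}^{f}\mathcal{X}^{a}\wedge \mathcal{X}^{b}$ and $d\mathcal{V}^{A}=0$. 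Differentiating $\delta ^{B}$ then yields $d\mathcal{N}_{c}^{B}\wedge \mathcal{X}^{c}+\mathcal{N}_{c}^{B}\,d\mathcal{X}^{c}$; re--expressing $d\mathcal{N}_{c}^{B}$ along the N--adapted directions $\delta _{a}$ and $\mathcal{V}_{A}$ and regrouping via the N--connection curvature $\Omega _{ab}^{C}$ of (\ref{cncalgebr}) is precisely what makes $\mathbf{T}_{\ ba}^{A}=\Omega _{\ ba}^{A}$ emerge, while the antisymmetrized connection coefficients produce $\mathbf{T}_{\ bf}^{a}=\mathbf{L}_{\ bf}^{a}-\mathbf{L}_{\ fb}^{a}+C_{\ bf}^{a}$, the last summand being the distinctive Lie--algebroid contribution absent on ordinary tangent bundles.

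For part b), I would proceed analogously from $\mathcal{R}_{~\overline{\beta }}^{\overline{\alpha }}=d\mathbf{\Gamma }_{\ \overline{\beta }}^{\overline{\alpha }}-\mathbf{\Gamma }_{\ \overline{\beta }}^{\overline{\gamma }}\wedge \mathbf{\Gamma }_{\ \overline{\gamma }}^{\overline{\alpha }}$, writing $\mathbf{\Gamma }_{\ \overline{\beta }}^{\overline{\alpha }}=\mathbf{\Gamma }_{\ \overline{\beta }\overline{\gamma }}^{\overline{\alpha }}\mathbf{e}^{\overline{\gamma }}$ and applying $d$ by the Leibniz rule. The term $d\mathbf{e}^{\overline{\gamma }}$ appearing here, expanded through (\ref{extercalc}), is exactly what injects the anholonomy coefficient $W_{\overline{\gamma }\overline{\delta }}^{\overline{\varphi }}$ into the stated expression $\mathbf{R}_{\ ~\overline{\beta }\overline{\gamma }\overline{\delta }}^{\overline{\alpha }}=\mathbf{e}_{\overline{\delta }}\mathbf{\Gamma }_{\ \overline{\beta }\overline{\gamma }}^{\overline{\alpha }}-\mathbf{e}_{\overline{\gamma }}\mathbf{\Gamma }_{\ \overline{\beta }\overline{\delta }}^{\overline{\alpha }}+\dots +\mathbf{\Gamma }_{\ \overline{\beta }\overline{\varphi }}^{\overline{\alpha }}W_{\overline{\gamma }\overline{\delta }}^{\overline{\varphi }}$. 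Collecting the resulting $2$--form coefficients against the basis $\mathbf{e}^{\overline{\gamma }}\wedge \mathbf{e}^{\overline{\delta }}$ and sorting by which of the frames $\delta _{a}$ or $\mathcal{V}_{A}$ differentiate and which factors are $\mathcal{X}^{a}$ or $\delta ^{B}$ should reproduce the six block formulas for $\mathbf{R}_{\ ebf}^{a},\mathbf{R}_{\ Bbf}^{A},\mathbf{R}_{\ ebA}^{a},\mathbf{R}_{\ BfA}^{C},\mathbf{R}_{\ bBA}^{a},\mathbf{R}_{\ ECB}^{A}$ in (\ref{dcurv}).

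The hard part will not be any single differentiation but the systematic bookkeeping of the three families of anholonomy coefficients $W_{\overline{\alpha }\overline{\beta }}^{\overline{\gamma }}=\{C_{ab}^{f},\Omega _{ab}^{C},\partial _{B}\mathcal{N}_{c}^{C}\}$ recorded in (\ref{extercalc1}), since these mix the Lie bracket structure functions with the N--connection data and contribute asymmetrically to the various h-- and v--blocks. In particular I would track carefully where $C_{bf}^{d}$ survives (it is responsible for the extra $\mathbf{L}_{\ ed}^{a}C_{bf}^{d}$ and $\mathbf{L}_{Bd}^{A}C_{bf}^{d}$ terms in $\mathbf{R}_{\ ebf}^{a}$ and $\mathbf{R}_{\ Bbf}^{A}$) and where $\Omega _{fb}^{C}$ couples the horizontal curvature to the vertical connection coefficients (the $-\mathbf{B}_{\ eA}^{a}\Omega _{\ fb}^{A}$ and $-\mathbf{B}_{\ BC}^{A}\Omega _{fb}^{C}$ contributions). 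The remaining verification is routine but tedious: confirm that every pairing of $\mathbf{e}_{\overline{\alpha }}$ acting on $\mathbf{\Gamma }_{\ \overline{\beta }\overline{\gamma }}^{\overline{\alpha }}$ and every quadratic $\mathbf{\Gamma }\wedge \mathbf{\Gamma }$ term lands in the correct block, which follows because the whole computation on $\mathcal{T}^{\mathbf{E}}\mathbf{P}$ mimics the tangent--bundle calculation once the brackets (\ref{extercalc1}) are used in place of the commutators of coordinate vector fields.
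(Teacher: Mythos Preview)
Your proposal is correct and follows essentially the same route as the paper: the paper's own justification (given in the appendix paragraph immediately preceding the theorem) is simply to carry out an ``N--adapted differential form calculus'' using the rules (\ref{extercalc}) for the coframe $\mathbf{e}^{\overline{\beta}}=\{\mathcal{X}^{a},\delta^{B}\}$ and the connection $1$--form $\mathbf{\Gamma}^{\overline{\gamma}}_{\ \overline{\alpha}}=\mathbf{\Gamma}^{\overline{\gamma}}_{\ \overline{\alpha}\overline{\beta}}\mathbf{e}^{\overline{\beta}}$, which is exactly the Cartan structure-equation computation you outline. Your proposal is in fact more explicit than the paper about how the anholonomy coefficients $W^{\overline{\gamma}}_{\overline{\alpha}\overline{\beta}}=\{C^{f}_{ab},\Omega^{C}_{ab},\partial_{B}\mathcal{N}^{C}_{c}\}$ from (\ref{extercalc1}) enter the various h--v blocks.
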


The formulas (\ref{dtors}) and (\ref{dcurv}) can be used for $\mathcal{D=}%
\widehat{\mathcal{D}},$ or $\mathcal{D=\tilde{D}}$ (in the last case, see
respective formulas (\ref{cdtors}) and (\ref{cdcurv})).

\subsection{N--adapted coefficients for the canonical d--connection}

\label{assectcan}

A d--metric structure $\overline{\mathbf{g}}=\{$ $\mathbf{g}_{\overline{%
\alpha }\overline{\beta }}\}$ on $\mathcal{T}^{E}\mathbf{P}$ is defined by
\begin{equation}
\overline{\mathbf{g}}=\mathbf{g}_{\overline{\alpha }\overline{\beta }}%
\mathbf{e}^{\overline{\beta }}\otimes \mathbf{e}^{\overline{\beta }}=\
\mathbf{g}_{ab}\ \mathcal{X}^{a}\otimes \mathcal{X}^{b}+\ \mathbf{g}_{AB}\
\delta ^{A}\otimes \delta ^{B}.  \label{dm}
\end{equation}%
It can be represented in generic off--diagonal form,\footnote{%
i.e. such metrics can not be diagonalized by coordinate transforms} $\mathbf{%
g}=g_{\overline{\alpha }\overline{\beta }}dz^{\overline{\alpha }}\otimes dz^{%
\overline{\beta }}$ where
\begin{equation}
\ g_{\overline{\alpha }\overline{\beta }}=\left[
\begin{array}{cc}
\ \mathbf{g}_{ab}+\mathcal{N}_{a}^{A}~\mathcal{N}_{b}^{B}\ \ \mathbf{g}_{AB}
& ~\mathcal{N}_{b}^{A}\ \ \mathbf{g}_{AC} \\
~\mathcal{N}_{a}^{E}\ \mathbf{g}_{ED} & \mathbf{g}_{DC}%
\end{array}%
\right] .  \label{offd}
\end{equation}%
Considering dual frame and/or frame transforms, $\mathbf{e}^{\overline{\beta
}}\rightarrow dz^{\overline{\beta }^{\prime }}$ and/or $\ \partial _{%
\overline{\alpha }^{\prime }}\rightarrow \mathbf{e}_{\overline{\alpha }}^{\
\overline{\alpha }^{\prime }}\partial _{\overline{\alpha }^{\prime }},$ with
$\ \mathbf{e}_{\overline{\alpha }}^{\ \overline{\alpha }^{\prime }}=\left[
\begin{array}{cc}
\mathbf{\ e}_{a}^{\ a^{\prime }} & \mathcal{N}_{a}^{B}\ \mathbf{e}_{B}^{\
A^{\prime }} \\
0 & \mathbf{e}_{A}^{\ A^{\prime }}%
\end{array}%
\right] ,$ we obtain quadratic relations between coefficients $\ g_{%
\overline{\alpha }\overline{\beta }}=e_{\overline{\alpha }}^{\ \overline{%
\alpha }^{\prime }}\ _{\overline{\beta }}^{\ \overline{\beta }^{\prime
}}\eta _{\overline{\alpha }^{\prime }\overline{\beta }^{\prime }},$ for $%
\eta _{\overline{\alpha }^{\prime }\overline{\beta }^{\prime }}=diag[\pm
1,...\pm 1]$ fixing a local signature for the metric field on $\mathcal{T}%
^{E}\mathbf{P}.$

Now, we can provide a proof of Theorem \ref{thcdc}. Let us consider $%
\widehat{\mathbf{\Gamma }}_{\ \overline{\beta }\overline{\gamma }}^{%
\overline{\alpha }}=(\widehat{\mathbf{L}}_{\beta \gamma }^{\alpha },\widehat{%
\mathbf{L}}_{B\gamma \;}^{A};\widehat{\mathbf{B}}_{\beta C}^{\alpha },%
\widehat{\mathbf{B}}_{BC}^{A}),$ where {\small
\begin{eqnarray}
\widehat{L}_{bf}^{a} &=&\frac{1}{2}\mathbf{g}^{ae}\left( \delta _{f}\mathbf{g%
}_{be}+\delta _{b}\mathbf{g}_{fe}-\delta _{e}\mathbf{g}_{bf}\right) +\frac{1%
}{2}\mathbf{g}^{ae}\left( \mathbf{g}_{bd}C_{\ fe}^{d}+\mathbf{g}_{fd}C_{\
eb}^{d}-\mathbf{g}_{ed}C_{\ bf}^{d}\right) ,  \notag \\
\widehat{L}_{Bf}^{A} &=&\mathcal{V}_{B}(\mathcal{N}_{f}^{A})+\frac{1}{2}%
\mathbf{g}^{AC}\left( \delta _{f}\mathbf{g}_{BC}-\mathbf{g}_{DC}\mathcal{V}%
_{B}(\mathcal{N}_{f}^{D})\ -\mathbf{g}_{DB}\mathcal{V}_{C}(\mathcal{N}%
_{f}^{D})\right) ,  \label{candcon} \\
\widehat{B}_{\beta C}^{\alpha } &=&\frac{1}{2}\mathbf{g}^{\alpha \tau }%
\mathcal{V}_{C}\mathbf{g}_{\beta \tau },\ \widehat{B}_{BC}^{A}=\frac{1}{2}%
\mathbf{g}^{AD}\left( \mathcal{V}_{C}\mathbf{g}_{BD}+\mathcal{V}_{B}\mathbf{g%
}_{CD}-\mathcal{V}_{D}\mathbf{g}_{BC}\right) .\   \notag
\end{eqnarray}%
}Using such values as N--adapted coefficients for a canonical d--connection $%
\widehat{\mathcal{D}}$, we can check that $\widehat{\mathcal{D}}\overline{%
\mathbf{g}}=0$ and that the $h$- and $v$-torsions (\ref{dtors}) are computed
$\widehat{T}_{\ bf}^{a}=C_{\ bf}^{a}$ and $\widehat{T}_{\ BC}^{A}=0.$ There
are nontrivial N--adapted coefficients of torsion of $\widehat{\mathcal{D}},$
i.e. $\widehat{T}_{\ bf}^{a},$ $\widehat{T}_{\ bA}^{a},\widehat{T}_{\
ba}^{A} $ and $\widehat{T}_{\ Ba}^{A},$ which can be computed by introducing
the coefficients (\ref{candcon}) into formulas (\ref{dtors}).

Let us show how we can compute the N--adapted coefficients of the distorting
relation $\widehat{\mathcal{D}}=\overline{\mathbf{\nabla }}+\widehat{%
\mathcal{Z}}$ in Remark \ref{remdistalg}. Having a d--metric structure $%
\overline{\mathbf{g}}$ on $\mathcal{T}^{E}\mathbf{P}$ we can always
construct a metric compatible Levi--Civita connection $\overline{\nabla }$
which is completely defined by the zero torsion condition, $~^{\nabla }%
\mathcal{T}^{\overline{\alpha }}=\{K_{\ \overline{\beta }\overline{\gamma }%
}^{\overline{\alpha }}\}=0.$ Parameterizing the N--adapted coefficients in
the form $K_{\ \overline{\beta }\overline{\gamma }}^{\overline{\alpha }}=(%
\overline{L}_{\beta \gamma }^{\alpha },\overline{L}_{B\gamma \;}^{A};%
\overline{B}_{\beta C}^{\alpha },\overline{B}_{BC}^{A}),$ we can verify via
straightforward computations with respect to (\ref{dderalg}) and (\ref%
{ddifalg}) that the conditions of the mentioned Remark us satisfied by
distortion relation
\begin{equation}
K_{\ \overline{\beta }\overline{\gamma }}^{\overline{\alpha }}=\widehat{%
\mathbf{\Gamma }}_{\ \overline{\beta }\overline{\gamma }}^{\overline{\alpha }%
}+\widehat{\mathbf{Z}}_{\ \overline{\beta }\overline{\gamma }}^{\overline{%
\alpha }},  \label{distrel1}
\end{equation}%
where the N--adapted coefficients of the distortion d-tensor $\widehat{%
\mathcal{Z}}=\{\widehat{\mathbf{Z}}_{\ \alpha \beta }^{\gamma }\}$ are%
{\small \ }%
\begin{eqnarray*}
\ \widehat{\mathbf{Z}}_{bf}^{a}&=&0,\ \widehat{\mathbf{Z}}_{bf}^{A} =-%
\widehat{\mathbf{B}}_{bB}^{a}\mathbf{g}_{af}\mathbf{g}^{AB}-\frac{1}{2}%
\Omega _{bf}^{A},~\widehat{\mathbf{Z}}_{Bf}^{a}=\frac{1}{2}\Omega _{af}^{C}%
\mathbf{g}_{CB}\mathbf{g}^{ba}-\Xi _{bf}^{ad}~\widehat{\mathbf{B}}_{dB}^{b},
\\
\widehat{\mathbf{Z}}_{Bf}^{A}&=&\ ^{+}\Xi _{CD}^{AB}~\widehat{\mathbf{T}}%
_{\gamma B}^{C},\widehat{\mathbf{Z}}_{BC}^{A}=0, \ \widehat{\mathbf{Z}}%
_{fB}^{a} =\frac{1}{2}\Omega _{bf}^{A}\mathbf{g}_{CB}\mathbf{g}^{ba}+\Xi
_{bf}^{ad}~\widehat{\mathbf{B}}_{dB}^{b},\   \notag \\
\widehat{\mathbf{Z}}_{bB}^{A}&=&-\ ^{-}\Xi _{CB}^{AD}~\widehat{\mathbf{T}}%
_{bD}^{C},\ \ \widehat{\mathbf{Z}}_{AB}^{a}=-\frac{\mathbf{g}^{ab}}{2}\left[
\widehat{\mathbf{T}}_{bA}^{C}\mathbf{g}_{CB}+\widehat{\mathbf{T}}_{bB}^{C}%
\mathbf{g}_{CA}\right] ,  \notag
\end{eqnarray*}%
for $\ \Xi _{bf}^{ad}~=\frac{1}{2}(\delta _{b}^{a}\delta _{f}^{d}-g_{\beta
\gamma }g^{\alpha \tau })$ and $~^{\pm }\Xi _{CD}^{AB}=\frac{1}{2}(\delta
_{C}^{A}\delta _{D}^{B}\pm g_{CD}g^{AB}).$

Introducing $K_{\ \overline{\beta }\overline{\gamma }}^{\overline{\alpha }}=%
\widehat{\mathbf{\Gamma }}_{\ \overline{\beta }\overline{\gamma }}^{%
\overline{\alpha }}$ (\ref{candcon}) into formulas (\ref{dcurv}), (\ref%
{driccialg}) and (\ref{sdcurv}), we compute respectively the coefficients of
curvature, $\widehat{\mathbf{R}}_{\ \overline{\beta }\overline{\gamma }%
\overline{\delta }}^{\overline{\alpha }}$,\ Ricci tensor, $\widehat{\mathbf{R%
}}_{\overline{\alpha }\overline{\beta }}$, and scalar curvature, $~^{s}%
\widehat{\mathbf{R}}.$ The distortions $K=\widehat{\mathbf{\Gamma }}+%
\widehat{\mathbf{Z}}$ (\ref{distrel1}) allows us to compute the distorting
tensors ($\widehat{\mathbf{Z}}_{\ \overline{\beta }\overline{\gamma }%
\overline{\delta }}^{\overline{\alpha }},\widehat{\mathbf{Z}}_{\overline{%
\alpha }\overline{\beta }}$ and $\ ^{s}\widehat{\mathbf{Z}})$ resulting in
similar values for the (pseudo) Riemannian geometry on $\mathcal{T}^{E}%
\mathbf{P}$ determined by $\left( \overline{\mathbf{g}},K\right) ,$ i.e. to
define $R_{\ \overline{\beta }\overline{\gamma }\overline{\delta }}^{%
\overline{\alpha }},R_{\ \overline{\beta }\overline{\gamma }}$ and $~^{s}R.$

\subsection{Lie algebroid mechanics and Kern--Matsumoto models}

Let us briefly outline some basic constructions \cite{vhamlagralg}
when the canonical N-- and d--connections and d--metric on $\mathcal{T}^{E}%
\mathbf{E,}$ for $\mathbf{P=E},$ can be generated from a regular Lagrangian $%
\mathcal{L}$ as a solution of the corresponding Euler--Lagrange equations.
The approach was developed in geometric mechanics with regular Lagrangians
on prolongations of Lie algebroids on bundle maps, see \cite%
{cortes,martinez1,dleon1} and references therein (the first models on
mechanics on algebroids were elaborated in \cite{weinstein,libermann}).

For a generating function $\mathcal{L}(x^{i},y^{a})\in C^{\infty }(\mathbf{E}%
)$ (or Lagrangian $L(x^{i},y^{a})$ if $\mathbf{E=TM)}$, we can compute $d^{E}%
\mathcal{L}=\rho _{a}^{i}(\partial _{i}\mathcal{L})\mathcal{X}^{a}+(\partial
_{A}\mathcal{L})\mathcal{V}^{A}$. A vertical endomorphism $S:\mathcal{T}^{%
\mathbf{E}}\mathbf{E}\rightarrow \mathcal{T}^{\mathbf{E}}\mathbf{E}$ is
constructed by $S(a,b,v)=\xi ^{V}(a,b)=(a,0,b_{a}^{V}).$ We consider $%
b_{a}^{V}$ as the vector tangent to the curve $a+\tau b;$ the curve
parameter $\tau =0.$ The vertical lift is a map $\xi ^{V}:\tau ^{\ast }%
\mathbf{E}\rightarrow \mathcal{T}^{\mathbf{E}}\mathbf{E}$ and the Liouville
dilaton vector field $\bigtriangleup (a)=\xi ^{V}(a,a)=(a,0,b_{a}^{V}).$
This allows us to construct a model of Lie algebroid mechanics for $\mathcal{%
L}$ which can be geometrized on $\mathcal{T}^{\mathbf{E}}\mathbf{E}$ in
terms of three geometric objects,%
\begin{eqnarray}
\mbox{ the Cartan 1-section:  }\theta _{\mathcal{L}}:= &&S^{\ast }(d\mathcal{%
L})\in Sec((\mathcal{T}^{\mathbf{E}}\mathbf{E})^{\ast });  \notag \\
\mbox{ the Cartan 2-section:  }\omega _{\mathcal{L}}:= &-d\theta _{\mathcal{L%
}}\in &Sec(\wedge ^{2}(\mathcal{T}^{\mathbf{E}}\mathbf{E})^{\ast });
\label{cartvar} \\
\mbox{ the Lagrangian energy : }E_{\mathcal{L}}:= &&\mathcal{L}%
i_{\bigtriangleup }\mathcal{L}-\mathcal{L}\in C^{\infty }(\mathbf{E}),
\notag
\end{eqnarray}%
where the Lie derivative $\mathcal{L}i_{\bigtriangleup }$ is considered in
the last formula. The dynamical equations for $\mathcal{L}$ are geometrized
\begin{equation}
i_{SX}\omega _{\mathcal{L}}=-S^{\ast }(i_{X}\omega _{\mathcal{L}})%
\mbox{ \
and \ }i_{\bigtriangleup }\omega _{\mathcal{L}}=-S^{\ast }(d\mathbf{E}_{%
\mathcal{L}}),\forall X\in Sec(\mathcal{T}^{\mathbf{E}}\mathbf{E}).
\label{geomeq}
\end{equation}

The geometric objects (\ref{cartvar}) and equations (\ref{geomeq}) can are
known for various applications in coefficient forms. Using local coordinates
$(x^{i},y^{a})\in \mathbf{E}$ and choosing a basis $\{\mathcal{X}_{a},%
\mathcal{V}_{A}\}\in Sec(\mathcal{T}^{\mathbf{E}}\mathbf{E}),$ for all $a,$
we have
\begin{eqnarray}
&&S\mathcal{X}_{a}=\mathcal{V}_{a},~S\mathcal{V}_{a}=0,~\bigtriangleup =y^{a}%
\mathcal{V}_{a},~E_{\mathcal{L}}=y^{a}\partial \mathcal{L}/\partial y^{a}-%
\mathcal{L},  \label{form1} \\
&&\omega _{\mathcal{L}}=\frac{\partial ^{2}\mathcal{L}}{\partial
y^{a}\partial y^{b}}\mathcal{X}^{a}\wedge \mathcal{V}^{b}+\frac{1}{2}(\rho
_{b}^{i}\frac{\partial ^{2}\mathcal{L}}{\partial x^{i}\partial y^{a}}-\rho
_{a}^{i}\frac{\partial ^{2}\mathcal{L}}{\partial x^{i}\partial y^{b}}%
+C_{ab}^{f}\frac{\partial \mathcal{L}}{\partial y^{f}})\mathcal{X}^{a}\wedge
\mathcal{X}^{b},  \notag
\end{eqnarray}%
for Lie algebroid structure functions $(\rho _{a}^{i},C_{ab}^{f}).$ As a
vertical endomorphism (equivalently, tangent structure) can be used the
operator $S:=\mathcal{X}^{a}\otimes \mathcal{V}_{a}.$ A regular system is
characterized by a non--degenerate Hessian
\begin{equation}
\tilde{g}_{ab}:=\frac{\partial ^{2}\mathcal{L}}{\partial y^{a}\partial y^{b}}%
,\ |\tilde{g}_{ab}|=\det |\tilde{g}_{ab}|\neq 0.  \label{hessian}
\end{equation}

An Euler--Lagrange section associated with $\mathcal{L}$ is given by any $%
\Gamma _{\mathcal{L}}=y^{a}\mathcal{X}_{a}+\varphi ^{a}\mathcal{V}_{a}\in
Sec(\mathcal{T}^{\mathbf{E}}\mathbf{E})$, when functions $\varphi
^{a}(x^{i},y^{b})$ solve this system of linear equations $\varphi ^{b}\frac{%
\partial ^{2}\mathcal{L}}{\partial y^{b}\partial y^{a}}+y^{b}(\rho _{b}^{i}%
\frac{\partial ^{2}\mathcal{L}}{\partial x^{i}\partial y^{a}}+C_{ab}^{f}%
\frac{\partial \mathcal{L}}{\partial y^{f}})-\rho _{a}^{i}\frac{\partial
\mathcal{L}}{\partial x^{i}}=0$. The semi--spray vector
\begin{equation}
\varphi ^{e}=\tilde{g}^{eb}(\rho _{b}^{i}\frac{\partial \mathcal{L}}{%
\partial x^{i}}-\rho _{a}^{i}\frac{\partial ^{2}\mathcal{L}}{\partial
x^{i}\partial y^{b}}y^{a}-C_{ba}^{f}\frac{\partial \mathcal{L}}{\partial
y^{f}}y^{a})  \label{semispray}
\end{equation}%
can be found in explicit forms for regular configurations when $\tilde{g}%
^{ab}$ is inverse to $\tilde{g}_{ab}.$ The section $\Gamma _{\mathcal{L}}$
transforms into a spray which states that the functions $\varphi ^{b}$ are
homogenous of degree $2$ on $y^{b}$ if the condition $\left[ \bigtriangleup
,\Gamma _{\mathcal{L}}\right] _{\mathbf{E}}=\Gamma _{\mathcal{L}}$ is
satisfied. The solutions of the Euler--Lagrange equations for $\mathcal{L},$
\begin{equation}
\frac{dx^{i}}{d\tau }=\rho _{a}^{i}y^{a}\mbox{ \ and \ }\frac{d}{d\tau }(%
\frac{\partial \mathcal{L}}{\partial y^{a}})+y^{b}C_{ab}^{f}\frac{\partial
\mathcal{L}}{\partial y^{f}}-\rho _{a}^{i}\frac{\partial \mathcal{L}}{%
\partial x^{i}}=0,  \label{eleq}
\end{equation}%
are parameterized by curves $c(\tau )=(x^{i}(\tau ),y^{a}(\tau ))\in \mathbf{%
E}.$

\subsection{The torsion and curvature of the normal d--connection}

Let us consider a 1--form associated to the normal d--connection $\widetilde{%
\mathcal{D}}=(\widetilde{\mathcal{D}}_{a},\widetilde{\mathcal{D}}_{A}),$ see
$\ ^{n}\mathcal{D}$ (\ref{ndc}) and $\ \widetilde{\mathbf{\Gamma }}_{%
\overline{\beta }\overline{\gamma }}^{\overline{\alpha }}=(\widehat{\mathbf{L%
}}_{bf}^{a},\widehat{\mathbf{B}}_{bc}^{a})$ (\ref{cdcc}),\ $\widetilde{%
\mathbf{\Gamma }}_{b}^{a}:=\widehat{\mathbf{L}}_{bf}^{a}\mathcal{X}^{f}+%
\widehat{\mathbf{B}}_{bc}^{a}\delta ^{c}$, where $\widetilde{\mathbf{e}}_{%
\overline{\alpha }}=(\widetilde{\mathbf{e}}_{a}=\delta _{a},\mathcal{V}_{A})$
and $\widetilde{\mathbf{e}}^{\overline{\beta }}:=\{\mathcal{X}^{a},\delta
^{b}=\mathcal{V}^{b}+\widetilde{\mathcal{N}}_{f}^{b}\mathcal{V}^{f}\}$ are
taken as in (\ref{ddcanad}). We can prove that the Cartan structure
equations are satisfied,%
\begin{eqnarray}
d\mathcal{X}^{a}-\mathcal{X}^{b}\wedge \widetilde{\mathbf{\Gamma }}%
_{b}^{a}&=&-h\widetilde{\mathcal{T}}^{a},\ d\delta ^{c}-\delta ^{b}\wedge
\widetilde{\mathbf{\Gamma }}_{b}^{c}=-v\widetilde{\mathcal{T}}^{c},
\label{cart1} \\
\mbox{ and } d\widetilde{\mathbf{\Gamma }}_{b}^{a}-\widetilde{\mathbf{\Gamma
}}_{b}^{c}\wedge \widetilde{\mathbf{\Gamma }}_{c}^{a}&=& -\widetilde{%
\mathcal{R}}_{\ b}^{a}.  \label{cart2}
\end{eqnarray}

The h-- and v--components of the torsion 2--form $\widetilde{\mathcal{T}}%
^{\alpha }=\left( h\widetilde{\mathcal{T}}^{a},v\widetilde{\mathcal{T}}%
^{a}\right) =\widetilde{\mathbf{T}}_{\ cb}^{a}\delta ^{c}\wedge \delta ^{b}$
from (\ref{cart1}). The N--adapted coefficients are computed
\begin{equation}
h\widetilde{\mathcal{T}}^{a}=\widetilde{\mathbf{B}}_{bc}^{a}\mathcal{X}%
^{b}\wedge \delta ^{c},v\widetilde{\mathcal{T}}^{a}=\frac{1}{2}\tilde{\Omega}%
_{bc}^{a}\mathcal{X}^{b}\wedge \mathcal{X}^{c}+(\mathcal{V}_{c}\widetilde{%
\mathcal{N}}_{b}^{a}-\widetilde{\mathbf{L}}_{bc}^{a})\mathcal{X}^{b}\wedge
\delta ^{c},  \label{tform}
\end{equation}%
where $\tilde{\Omega}_{bc}^{a}$ are coefficients of the canonical
N--connection curvature (\ref{cncalgebr}) of the canonical N--connection, $%
\mathcal{N}_{f}^{b}\rightarrow $ $\widetilde{\mathcal{N}}_{f}^{b}$ (\ref%
{canonnc}). In explicit form, the N--adapted coefficients of d--torsion (\ref%
{dtors}) of $\widetilde{\mathcal{D}}$ are
\begin{equation}
\widetilde{\mathbf{T}}_{\ cb}^{a}=0,\widetilde{\mathbf{T}}_{\ cB}^{a}=%
\widetilde{\mathbf{B}}_{cB}^{a},\widetilde{\mathbf{T}}_{\ cb}^{A}=\tilde{%
\Omega}_{bc}^{A},\widetilde{\mathbf{T}}_{\ cB}^{A}=\mathcal{V}_{B}\widetilde{%
\mathcal{N}}_{c}^{A}-\widetilde{\mathbf{L}}_{cB}^{A},\widetilde{\mathbf{T}}%
_{\ CB}^{A}=0,  \label{cdtors}
\end{equation}%
where indices $A,B,C,...$ are transformed into, respectively, $a,b,c,...$ at
the end (in order to keep the convention on $h$- and $v$--indices). We note
that we have chosen such nonholonomic distributions when the $Y$--fields
from the Theorem \ref{setscdc} are such way stated that the formulas (\ref%
{cdtors}) on $\mathcal{T}^{\mathbf{E}}\mathbf{E}$ are symilar to those on $%
\mathbf{TM}.$

Using a differential form calculus, we can also compute the curvature
2--form (\ref{cart2}),{\small
\begin{equation}
\widetilde{\mathcal{R}}_{\ f}^{e}=\widetilde{\boldsymbol{R}}_{\
fab}^{e}\delta ^{a}\wedge \delta ^{b}=\frac{1}{2}\widetilde{\mathcal{R}}_{\
fab}^{e}\mathcal{X}^{a}\wedge \mathcal{X}^{b}+\widetilde{\mathcal{P}}_{\
faB}^{e}\mathcal{X}^{a}\wedge \delta ^{B}+\frac{1}{2}\widetilde{\mathcal{S}}%
_{\ fAB}^{e}\delta ^{A}\wedge \delta ^{B},  \label{cform}
\end{equation}%
} where the nontrivial N--adapted coefficients are {\small
\begin{eqnarray}
\widetilde{\mathcal{R}}_{\ fab}^{e} &=&\delta _{b}\widetilde{\mathbf{L}}%
_{fa}^{e}-\delta _{a}\widetilde{\mathbf{L}}_{fb}^{e}+\widetilde{\mathbf{L}}%
_{fa}^{d}\widetilde{\mathbf{L}}_{db}^{e}-\widetilde{\mathbf{L}}_{fb}^{d}%
\widetilde{\mathbf{L}}_{da}^{e}-\widetilde{\mathbf{B}}_{fA}^{e}\tilde{\Omega}%
_{ba}^{A},  \label{cdcurv} \\
\widetilde{\mathcal{P}}_{\ bfB}^{a} &=&\mathcal{V}_{B}\widehat{L}_{bf}^{a}-%
\widetilde{\mathcal{D}}_{f}\widetilde{\mathbf{B}}_{bB}^{a},\ \widetilde{%
\mathcal{S}}_{\ fAB}^{e} =\mathcal{V}_{B}\widetilde{\mathbf{B}}_{fA}^{e}-%
\mathcal{V}_{A}\widetilde{\mathbf{B}}_{fB}^{e}+\widetilde{\mathbf{B}}%
_{fA}^{d}\widetilde{\mathbf{B}}_{dB}^{e}-\widetilde{\mathbf{B}}_{fB}^{d}%
\widetilde{\mathbf{B}}_{dA}^{e}.  \notag
\end{eqnarray}
}

The distortion relations for the Ricci tensor are computed
\begin{eqnarray}
R_{\overline{\alpha }\overline{\beta }}&=&\widetilde{\mathbf{R}}_{\overline{%
\alpha }\overline{\beta }}+\widetilde{\mathbf{Z}}_{\overline{\alpha }%
\overline{\beta }},  \label{driccidist} \\
R_{\overline{\beta }\overline{\gamma }} &=&R_{\ ~\overline{\beta }\overline{%
\gamma }\overline{\alpha }}^{\overline{\alpha }}=\mathbf{e}_{\overline{%
\delta }}K_{\ \overline{\beta }\overline{\gamma }}^{\overline{\alpha }}-%
\mathbf{e}_{\overline{\gamma }}K_{\ \overline{\beta }\overline{\delta }}^{%
\overline{\alpha }}+K_{\ \overline{\beta }\overline{\gamma }}^{\overline{%
\varphi }}K_{\ \overline{\varphi }\overline{\delta }}^{\overline{\alpha }%
}-K_{\ \overline{\beta }\overline{\delta }}^{\overline{\varphi }}K_{\
\overline{\varphi }\gamma }^{\overline{\alpha }}+K_{\ \overline{\beta }%
\overline{\varphi }}^{\overline{\alpha }}W_{\overline{\gamma }\overline{%
\delta }}^{\overline{\varphi }},  \notag \\
\widetilde{\mathbf{Z}}_{\ \overline{\beta }\overline{\gamma }} &=&\widetilde{%
\mathbf{Z}}_{\ \overline{\beta }\overline{\gamma }\overline{\alpha }}^{%
\overline{\alpha }}=\mathbf{e}_{\overline{\alpha }}\widetilde{\mathbf{Z}}_{\
\overline{\beta }\overline{\gamma }}^{\overline{\alpha }}-\mathbf{e}_{%
\overline{\gamma }}\ \widetilde{\mathbf{Z}}_{\ \overline{\beta }\overline{%
\alpha }}^{\overline{\alpha }}+\widetilde{\mathbf{Z}}_{\ \overline{\beta }%
\overline{\gamma }}^{\overline{\varphi }}\ \widetilde{\mathbf{Z}}_{\
\overline{\varphi }\overline{\alpha }}^{\overline{\alpha }}-\widetilde{%
\mathbf{Z}}_{\ \overline{\beta }\overline{\alpha }}^{\overline{\varphi }}\
\widetilde{\mathbf{Z}}_{\ \overline{\varphi }\overline{\gamma }}^{\overline{%
\alpha }}+  \notag \\
&&\widetilde{\mathbf{\Gamma }}_{\ \overline{\beta }\overline{\gamma }}^{%
\overline{\varphi }}\ \widetilde{\mathbf{Z}}_{\ \overline{\varphi }\overline{%
\alpha }}^{\overline{\alpha }}-\widetilde{\mathbf{\Gamma }}_{\ \overline{%
\beta }\overline{\alpha }}^{\overline{\varphi }}\ \widetilde{\mathbf{Z}}_{\
\overline{\varphi }\overline{\gamma }}^{\overline{\alpha }}+\widetilde{%
\mathbf{Z}}_{\ \overline{\beta }\overline{\gamma }}^{\overline{\varphi }}%
\widetilde{\mathbf{\Gamma }}_{\ \overline{\varphi }\overline{\alpha }}^{%
\overline{\alpha }}-\widetilde{\mathbf{Z}}_{\ \overline{\beta }\overline{%
\alpha }}^{\overline{\varphi }}\ \widetilde{\mathbf{\Gamma }}_{\ \overline{%
\varphi }\overline{\gamma }}^{\overline{\alpha }}+\widetilde{\mathbf{Z}}_{\
\overline{\beta }\overline{\varphi }}^{\overline{\alpha }}W_{\overline{%
\gamma }\overline{\alpha }}^{\overline{\varphi }}.  \notag
\end{eqnarray}

\end{document}